\newif\ifsmallpaper
\newif\ifprelimversion
\def\hyph{-\penalty0\hskip0pt\relax} % use this command instead of an explicit - in a word, if you want that the word will still be hyphenated as usual, despite the fact that it already contains a hypen. see also http://www.tex.ac.uk/cgi-bin/texfaq2html?label=nohyph
\definecolor{darkred}{rgb}{.5,0,0}
\definecolor{darkgreen}{rgb}{0,.4,.2}
\definecolor{darkblue}{rgb}{.1,.2,.6}
\newtheoremstyle{style}
 {\topsep}              % Space above
 {\topsep}              % Space below
 {\itshape}              % Body font: original {\normalfont}
 {}                         % Indent amount (empty = no indent,%\parindent = paraindent)
 {\sffamily\bfseries}  % Thm head font original
 {.}	                     % Punctuation after thm head
 { }                         % Space after thm head (\newline = linebreak)
 {}                          % Thm head spec
\theoremstyle{style}
\newtheorem{definition}{Definition}
\newtheorem{theorem}[definition]{Theorem}
\newtheorem{lemma}[definition]{Lemma}
\newtheorem{corollary}[definition]{Corollary}
\newtheorem{observation}[definition]{Observation}
\DeclareMathOperator{\st}{\textrm{s.t.}}
\DeclareMathOperator*{\conv}{conv}
\DeclareMathOperator{\diam}{diam}
\DeclareMathOperator*{\mini}{\rm minimize}
\DeclareMathOperator*{\maxi}{\rm maximize}
\DeclareMathOperator*{\argmin}{\arg\min}
\DeclareMathOperator*{\argmax}{\arg\max}
\DeclareMathOperator{\lmax}{\lambda_{\max}}
\DeclareMathOperator{\lmin}{\lambda_{\min}}
\DeclareMathOperator*{\diag}{diag}
\DeclareMathOperator*{\sign}{sign}
\DeclareMathOperator*{\tr}{Tr}
\DeclareMathOperator*{\rk}{Rk}
\DeclareMathOperator*{\card}{card}
\DeclareMathOperator{\approxLin}{\textsc{ApproxLinear}}
\DeclareMathOperator{\exactLin}{\textsc{ExactLinear}}
\DeclareMathOperator{\randomLin}{\textsc{RandomLinear}}
\providecommand{\abs}[1]{\left\lvert#1\right\rvert}
\providecommand{\norm}[1]{\left\lVert#1\right\rVert}
\providecommand{\frobnorm}[1]{\norm{#1}_{Fro}}
\providecommand{\nucnorm}[1]{\norm{#1}_{*}}
\providecommand{\spectnorm}[1]{\norm{#1}_{spec}}
\providecommand{\maxnorm}[1]{\norm{#1}_{\max}}
\newcommand{\R}{\mathbb{R}}
\newcommand{\X}{\mathcal{X}}
\newcommand{\Sym}{\mathbb{S}}
\newcommand{\Spectahedron}{\mathcal{S}}
\newcommand{\MaxCutPolytope}{\boxplus}
\newcommand{\FourPointPSD}{\Spectahedron^4_{\text{sparse}}}
\newcommand{\FourPointPSDplus}{\Spectahedron^{4+}_{\text{sparse}}}
\newcommand{\FourPointPSDminus}{\Spectahedron^{4-}_{\text{sparse}}}
\newcommand{\LOneBall}{\diamondsuit}
\newcommand{\signVec}{\mathbf{s}}
\newcommand{\N}{\mathbb{N}}
\newcommand{\id}{\mathbf{I}} % big i for identity
\newcommand{\ind}{\mathbf{1}} % indicator vectors
\newcommand{\0}{\mathbf{0}} % the origin
\newcommand{\unit}{\mathbf{e}} % unit basis vectors
\newcommand{\zero}{\mathbf{0}}
\newcommand\SetOf[2]{\left\{#1\,\vphantom{#2}\right.\left|\vphantom{#1}\,#2\right\}}
\begin{document}
\pagestyle{fancy}

\ifprelimversion

\title{Convex Optimization without Projection Steps\\
{\large with Applications to Sparse and Low Rank Approximation}}
%was: ``Sparse Approximation Algorithms for Machine Learning''}
\author{Martin Jaggi\\
{\small ETH Z\"{u}rich, Switzerland}\\
{\small \href{mailto:jaggi@inf.ethz.ch}{jaggi@inf.ethz.ch}}}
\date{}

\else

\fi

\maketitle

\paragraph{Abstract.}
We study the general problem of minimizing a convex function over a compact convex domain. We will investigate a simple iterative approximation algorithm based on the method by Frank \& Wolfe~\cite{Frank:1956vp}, that does not need projection steps in order to stay inside the optimization domain.
Instead of a projection step, the linearized problem defined by a current subgradient is solved, which gives a step direction that will naturally stay in the domain.
Our framework generalizes the sparse greedy algorithm of~\cite{Frank:1956vp} and its primal-dual analysis by~\cite{Clarkson:2010hv} (and the low-rank SDP approach by~\cite{Hazan:2008kz}) to arbitrary convex domains. Analogously, we give a convergence proof guaranteeing $\varepsilon$-small duality gap after $O(\frac1\varepsilon)$ iterations.

The method allows us to understand the sparsity of approximate solutions for any $\ell_1$-regularized convex optimization problem (and for optimization over the simplex), expressed as a function of the approximation quality. We obtain matching upper and lower bounds of $\Theta(\frac1\varepsilon)$ for the sparsity for $\ell_1$-problems. The same bounds apply to low-rank semidefinite optimization with bounded trace, showing that rank $O(\frac1\varepsilon)$ is best possible here as well.
As another application, we obtain sparse matrices of $O(\frac1\varepsilon)$ non-zero entries as $\varepsilon$-approximate solutions when optimizing any convex function over a class of diagonally dominant symmetric matrices.

We show that our proposed first-order method also applies to nuclear norm and max-norm matrix optimization problems. 
For nuclear norm regularized optimization, such as matrix completion and low-rank recovery, we demonstrate the practical efficiency and scalability of our algorithm for large matrix problems, as e.g. the Netflix dataset. 
For general convex optimization over bounded matrix max-norm, our algorithm is the first with a convergence guarantee, to the best of our knowledge.\\

{\small(This article consists of the first two chapters of the author's PhD thesis~\cite{Jaggi:2011ux}.)}

\newpage
\begin{small}
\tableofcontents
\end{small}
\newpage

% !TEX root = for-arxiv.tex

\section{Introduction}

\paragraph{Motivation.}
For the performance of large scale approximation algorithms for convex optimization, the trade-off between the number of iterations on one hand, and the computational cost per iteration on the other hand, is of crucial importance. The lower complexity per iteration is among the main reasons why first-order methods (i.e., methods using only information from the first derivative of the objective function), as for example stochastic gradient descent, are currently used much more widely and successfully in many machine learning applications --- despite the fact that they often need a larger number of iterations than for example second-order methods.

Classical gradient descent optimization techniques usually require a projection step in each iteration, in order to get back to the feasible region. For a variety of applications, this is a non-trivial and costly step.
One prominent example is semidefinite optimization, where the projection of an arbitrary symmetric matrix back to the PSD matrices requires the computation of a complete eigenvalue-decomposition.%

Here we study a simple first-order approach that does not need any projection steps, and is applicable to any convex optimization problem over a compact convex domain. The algorithm is a generalization of an existing method originally proposed by Frank \& Wolfe~\cite{Frank:1956vp}, which was recently extended and analyzed in the seminal paper of Clarkson~\cite{Clarkson:2010hv} for optimization over the unit simplex.

Instead of a projection, the primitive operation of the optimizer here is to minimize a \emph{linear} approximation to the function over the same (compact) optimization domain. Any (approximate) minimizer of this simpler linearized problem is then chosen as the next step-direction. Because all such candidates are always feasible for the original problem, the algorithm will automatically stay in our convex feasible region. The analysis will show that the number of steps needed is roughly identical to classical gradient descent schemes, meaning that $O\left(\frac1\varepsilon\right)$ steps suffice in order to obtain an approximation quality of $\varepsilon>0$.

The main question about the efficiency per iteration of our algorithm, compared to a classical gradient descent step, can not be answered generally in favor of one or the other. Whether a projection or a linearized problem is computationally cheaper will crucially depend on the shape and the representation of the feasible region.
Interestingly, if we consider convex optimization over the Euclidean $\norm{.}_2$-ball, the two approaches fully coincide, i.e., we exactly recover classical gradient descent.
However there are several classes of optimization problems where the linearization approach we present here is definitely very attractive, and leads to faster and simpler algorithms. This includes for example $\ell_1$-regularized problems, which we discuss in Sections~\ref{sec:vecSimplex} and~\ref{sec:vecL1}, as well as semidefinite optimization under bounded trace, as studied by~\cite{Hazan:2008kz}, see Section~\ref{sec:algTrace}.

\paragraph{Sparsity and Low-Rank.}
For these mentioned specific classes of convex optimization problems, we will additionally demonstrate that our studied algorithm leads to (optimally) sparse or low-rank solutions. This property is a crucial side-effect that can usually not be achieved by classical optimization techniques, and corresponds to the \emph{coreset} concept known from computational geometry, see also~\cite{GartnerJaggi:2009}.
More precisely, we show matching upper and lower bounds of $\Theta\left(\frac1\varepsilon\right)$ for the sparsity of solutions to general $\ell_1$-regularized problems, and also for optimizing over the simplex, if the required approximation quality is $\varepsilon$.
For matrix optimization, an analogous statement will hold for the rank in case of nuclear norm regularized problems.

\paragraph{Applications.}
Applications of the first mentioned class of $\ell_1$-regularized problems do include many machine learning algorithms ranging from support vector machines (SVMs) to boosting and multiple kernel learning, as well as $\ell_2$-support vector regression (SVR), mean-variance analysis in portfolio selection~\cite{Markowitz:1952tg}, the smallest enclosing ball problem~\cite{Badoiu:2007bj}, $\ell_{1}$-regularized least squares (also known as basis pursuit de-noising in compressed sensing), the \emph{Lasso}~\cite{Tibshirani:1996wb}, and $\ell_{1}$-regularized logistic regression~\cite{Koh:2007wo} as well as walking of artificial dogs over rough terrain~\cite{Kalakrishnan:2010fr}.

The second mentioned class of matrix problems, that is, optimizing over semidefinite matrices with bounded trace, has applications in low-rank recovery~\cite{Fazel:2001vw,Candes:2009kj,Candes:2010jb}, dimensionality reduction, matrix factorization and completion problems, as well as general semidefinite programs (SDPs).

Further applications to nuclear norm and max-norm optimization, such as sparse/robust PCA will be discussed in Section~\ref{chap:optNucMax}.

\paragraph{History and Related Work.}
The class of first-order optimization methods in the spirit of Frank and Wolfe~\cite{Frank:1956vp} has a rich history in the literature. %
Although the focus of the original paper was on quadratic programming, its last section~\cite[Section 6]{Frank:1956vp} already introduces the general algorithm for minimizing convex functions using the above mentioned linearization idea, when the optimization domain is given by linear inequality constraints. In this case, each intermediate step consists of solving a linear program. The given convergence guarantee bounds the primal error, and assumes that all internal problems are solved exactly.

Later~\cite{Dunn:1978di} has generalized the same method to arbitrary convex domains, and improved the analysis to also work when the internal subproblems are only solved approximately, see also~\cite{Dunn:1980cw}. Patriksson in~\cite{Patriksson:1993bl,Patriksson:1998eh} then revisited the general optimization paradigm, investigated several interesting classes of convex domains, and coined the term ``cost approximation'' for this type of algorithms. More recently, \cite{TongZhang:2003da} considered optimization over convex hulls, and studies the crucial concept of sparsity of the resulting approximate solutions. However, this proposed algorithm does not use linear subproblems. 

The most recent work of Clarkson~\cite{Clarkson:2010hv} provides a good overview of the existing lines of research, and investigates the sparsity solutions when the optimization domain is the unit simplex, and establishing the connection to coreset methods from computational geometry. Furthermore, \cite{Clarkson:2010hv} was the first to introduce the stronger notion of convergence in primal-dual error for this class of problems, and relating this notion of duality gap to Wolfe duality.

\paragraph{Our Contributions.}
The character of this article mostly lies in reviewing, re-interpreting and generalizing the existing approach given by~\cite{Clarkson:2010hv},~\cite{Hazan:2008kz} and the earlier papers by~\cite{TongZhang:2003da,Dunn:1978di,Frank:1956vp}, who do deserve credit for the analysis techniques.
Our contribution here is to transfer these methods to the more general case of convex optimization over arbitrary bounded convex subsets of a vector space, while providing stronger primal-dual convergence guarantees.
To do so, we propose a very simple alternative concept of optimization duality, which will allow us to generalize the stronger primal-dual convergence analysis which~\cite{Clarkson:2010hv} has provided for the the simplex case, to optimization over arbitrary convex domains. So far, no such guarantees on the duality gap were known in the literature for the Frank-Wolfe-type algorithms~\cite{Frank:1956vp}, except when optimizing over the simplex. Furthermore, we generalize Clarkson's analysis~\cite{Clarkson:2010hv} to work when only approximate linear internal optimizers are used, and to arbitrary starting points. Also, we study the sparsity of solutions in more detail, obtaining upper and lower bounds for the sparsity of approximate solutions for a wider class of domains.

Our proposed notion of duality gives simple certificates for the current approximation quality, which can be used for any optimization algorithm for convex optimization problems over bounded domain, even in the case of non-differentiable objective functions.

We demonstrate the broad applicability of our general technique to several important classes of optimization problems, such as $\ell_1$- and $\ell_\infty$-regularized problems, as well as semidefinite optimization with uniformly bounded diagonal, and sparse semidefinite optimization.

Later in Section~\ref{chap:optNucMax} we will give a simple transformation in order to apply the first-order optimization techniques we review here also to nuclear norm and max-norm matrix optimization problems.

\paragraph{Acknowledgments.}
Credit for the important geometric interpretation of the duality gap over the spectahedron as the distance to the linearization goes to Soeren Laue.
Furthermore, the author would like to thank Marek Sulovsk{\'y}, Bernd G{\"a}rtner, Arkadi Nemirovski, Elad Hazan, Joachim Giesen, Sebastian Stich, Michel Baes, Michael B{\"u}rgisser and Christian Lorenz M{\"u}ller for helpful discussions and comments.

\section{The Poor Man's Approach to Convex Optimization and Duality}\label{sec:poorDual}

\paragraph{The Idea of a Duality given by Supporting Hyperplanes.}
Suppose we are given the task of minimizing a convex function $f$ over a bounded convex set $D \subset \R^n$, and let us assume for the moment that $f$ is continuously differentiable.

Then for any point $x\in D$, it seems natural to consider the tangential ``supporting'' hyperplane to the graph of the function $f$ at the point $(x,f(x))$. Since the function $f$ is convex, any such linear approximation must lie below the graph of the function.

Using this linear approximation for each point $x\in D$, we define a \emph{dual} function value $\omega(x)$ as the minimum of the linear approximation to $f$ at point $x$, where the minimum is taken over the domain $D$. We note that the point attaining this linear minimum also seems to be good direction of improvement for our original minimization problem given by $f$, as seen from the current point $x$. This idea will lead to the optimization algorithm that we will discuss below.

As the entire graph of $f$ lies above any such linear approximation, it is easy to see that $\omega(x) \le f(y)$ holds for each pair $x,y \in D$. This fact is called \emph{weak duality} in the optimization literature.

This rather simple definition already completes the duality concept that we will need in this paper. We will provide a slightly more formal and concise definition in the next subsection, which is useful also for the case of non-differentiable convex functions.
The reason we call this concept a \emph{poor man's} duality is that we think it is considerably more direct and intuitive for the setting here, when compared to classical Lagrange duality or Wolfe duality, see e.g.~\cite{Boyd:2004uz}.

\subsection{Subgradients of a Convex Function}
In the following, we will work over a general vector space $\X$ equipped with an inner product $\langle .,.\rangle$.
As the most prominent example in our investigations, the reader might always think of the case $\X=\R^n$ with $\langle x,y\rangle = x^Ty$ being the standard Euclidean scalar product.

We consider general convex optimization problems given by a convex function $f: \X \rightarrow \R$ over a compact\footnote{%
Here we call a set $D\subseteq X$ \emph{compact} if it is closed and bounded. See~\cite{Kurdila:2005ur} for more details.} %
convex domain $D \subseteq \X$, or formally
\begin{equation}\label{eq:optGenConvex}
   \mini_{x \in D} \, f(x)  \ .
\end{equation}

In order to develop both our algorithm and the notion of duality for such convex optimization problems in the following, we need to formally define the supporting hyperplanes at a given point $x\in D$. These planes coincide exactly with the well-studied concept of \emph{subgradients} of a convex function.

For each point $x\in D$, the \emph{subdifferential} at $x$ is defined as the set of normal vectors of the affine linear functions through $(x,f(x))$ that lie below the function $f$. Formally
\begin{equation}\label{eq:subDiff}
\partial f(x) := \SetOf{d_x\in\X}{f(y) \ge f(x)+ \langle y-x ,d_x\rangle ~~~\forall y \in D} \ .\!\!
\end{equation}

Any element $d_x\in\partial f(x)$ is called a \emph{subgradient} to $f$ at $x$. Note that for each $x$, $\partial f(x)$ is a closed convex set. Furthermore,
if $f$ is differentiable, then the subdifferential consists of exactly one element for each $x\in D$, namely $\partial f(x) = \{\nabla f(x)\}$, as explained e.g. in~\cite{Nemirovski:2005wu,Kurdila:2005ur}. %

If we assume that $f$ is convex and lower semicontinuous\footnote{%
The assumption that our objective function $f$ is lower semicontinuous on $D$, is equivalent to the fact that its epigraph --- i.e. the set $\SetOf{(x,t) \in D\times \R}{t\ge f(x)}$ of all points lying on or above the graph of the function --- is closed, see also~\cite[Theorem 7.1.2]{Kurdila:2005ur}.
} %
on $D$, then it is known that $\partial f(x)$ is non-empty, meaning that there exists at least one subgradient $d_x$ for every point $x\in D$. For a more detailed investigation of subgradients, we refer the reader to one of the works of e.g.~\cite{Rockafellar:1997ww,Boyd:2004uz,Nemirovski:2005wu,Kurdila:2005ur,Borwein:2006ts}.

\subsection{A Duality for Convex Optimization over Compact Domain}
For a given point $x\in D$, and any choice of a subgradient $d_x\in \partial f(x)$, we define a \emph{dual} function value 
\begin{equation}\label{eq:dual}
\omega(x,d_x) := \min_{y\in D} \  f(x)+ \langle y-x ,d_x\rangle \ .
\end{equation}
In other words $\omega(x,d_x) \in \R$ is the minimum of the linear approximation to $f$ defined by the subgradient $d_x$ at the supporting point $x$, where the minimum is taken over the domain $D$. This minimum is always attained, since $D$ is compact, and the linear function is continuous in $y$.

By the definition of the subgradient --- as lying below the graph of the function $f$ --- we readily attain the property of weak-duality, which is at the core of the optimization approach we will study below.

\begin{lemma}[Weak duality]\label{lem:weakDual}
For all pairs $x,y\in D$, it holds that
\[
\omega(x,d_x) \le f(y)
\]
\end{lemma}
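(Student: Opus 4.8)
The plan is to derive the inequality directly from the two defining relations already in play: the definition \eqref{eq:dual} of the dual value $\omega(x,d_x)$ as a minimum over $D$, and the subgradient inequality \eqref{eq:subDiff} that characterizes membership $d_x \in \partial f(x)$. No auxiliary construction is needed; the statement falls out of chaining these two facts in a single step.

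First I would fix an arbitrary pair $x,y \in D$ together with the chosen subgradient $d_x \in \partial f(x)$. The key observation is that in \eqref{eq:dual} the minimum ranges over all of $D$, of which $y$ is one admissible competitor. Hence the minimum can only be smaller than or equal to the value of the linear approximation evaluated at this particular point $y$ (I rename the bound variable to $z$ to keep it distinct from the fixed $y$):
\[
\omega(x,d_x) \;=\; \min_{z \in D} \Big( f(x) + \langle z - x, d_x \rangle \Big) \;\le\; f(x) + \langle y - x, d_x \rangle .
\]

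Next I would invoke the defining property of the subdifferential \eqref{eq:subDiff}: because $d_x \in \partial f(x)$, the affine function $z \mapsto f(x) + \langle z - x, d_x \rangle$ lies below the graph of $f$ everywhere on $D$, so in particular $f(x) + \langle y - x, d_x \rangle \le f(y)$. Combining this with the previous display yields $\omega(x,d_x) \le f(y)$, which is exactly the claim.

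There is essentially no obstacle here; the only point requiring a moment's care is keeping the roles of the two domain points distinct, since $y$ serves both as the fixed point on the right-hand side of the lemma and as one feasible competitor in the minimization defining $\omega$ — which is precisely why the argument closes immediately. The compactness of $D$ guarantees that the minimum in \eqref{eq:dual} is attained, but this is not even needed for the inequality itself: we only use that $y$ is \emph{one} feasible point, which suffices to bound the minimum from above.
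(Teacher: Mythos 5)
Your proof is correct and follows exactly the paper's own argument: bound the minimum in the definition of $\omega(x,d_x)$ by the feasible competitor $y\in D$, then apply the subgradient inequality~(\ref{eq:subDiff}) to bound $f(x)+\langle y-x,d_x\rangle$ by $f(y)$. Your added remarks (renaming the bound variable, noting that attainment of the minimum is not needed) are accurate but inessential refinements of the same two-step chain.
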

\begin{proof}
Immediately from the definition of the dual $\omega(.,.)$: \vspace{-4pt}
\[
\begin{array}{rl}
\omega(x,d_x) =& \min_{z\in D} \  f(x)+ \langle z-x ,d_x\rangle \\
\le& f(x) + \langle y-x ,d_x\rangle \\
\le& f(y) \ .
\end{array}\vspace{-5pt}
\]
Here the last inequality is by the definition~(\ref{eq:subDiff}) of a subgradient.
\end{proof}
Geometrically, this fact can be understood as that any function value $f(y)$, which is ``part of'' the graph of $f$, always lies higher than the minimum over any linear approximation (given by $d_x$) to $f$.

In the case that $f$ is differentiable, there is only one possible choice for a subgradient, namely $d_x = \nabla f(x)$, and so we will then denote the (unique) dual value for each $x$ by
\begin{equation}\label{eq:dualDiff}
\omega(x) := \omega(x,\nabla f(x)) = \min_{y\in D} \  f(x)+ \langle y-x ,\nabla f(x)\rangle \ .
\end{equation}

\paragraph{The Duality Gap as a Measure of Approximation Quality.}
The above duality concept allows us to compute a very simple measure of approximation quality, for any candidate solution $x\in D$ to problem~(\ref{eq:optGenConvex}). This measure will be easy to compute even if the true optimum value $f(x^*)$ is unknown, which will very often be the case in practical applications.
The duality gap $g(x,d_x)$ at any point $x\in D$ and any subgradient subgradient $d_x\in \partial f(x)$ is defined as
\begin{equation}\label{eq:defGap}
g(x,d_x) := f(x) - \omega(x,d_x) =\  \max_{y\in D} \, \langle x-y, d_x \rangle \ ,
\end{equation}
or in other words as the difference of the current function value $f(x)$ to the minimum value of the corresponding linearization at $x$, taken over the domain $D$. The quantity $g(x,d_x) = f(x) - \omega(x,d_x)$ will be called the \emph{duality gap} at $x$, for the chosen $d_x$.

By the weak duality Lemma~\ref{lem:weakDual}, we obtain that for any optimal solution $x^*$ to problem~(\ref{eq:optGenConvex}), it holds that
\begin{equation}\label{eq:duGapGePrGap}
g(x,d_x) \ge f(x) - f(x^*) \ge 0 ~~~~~\forall x\in D, \ \forall d_x\in\partial f(x) \ .\!\!\!
\end{equation}
Here the quantity $f(x) - f(x^*)$ is what we call the \emph{primal error} at point $x$, which is usually impossible to compute due to $x^*$ being unknown. The above inequality~(\ref{eq:duGapGePrGap}) now gives us that the duality gap --- which is easy to compute, given $d_x$ --- is always an upper bound on the primal error. 
This property makes the duality gap an extremely useful measure for example as a stopping criterion in practical optimizers or heuristics.

We call a point $x\in\X$ an \emph{$\varepsilon$-approximation} if $g(x,d_x) \le \varepsilon$ for some choice of subgradient $d_x\in\partial f(x)$.

For the special case that $f$ is differentiable, we will again use the simpler notation $g(x)$ for the (unique) duality gap for each $x$, i.e.
\[
g(x) := g(x,\nabla f(x)) = \max_{y\in D} \, \langle x-y, \nabla f(x)\rangle \ .
\]

\paragraph{Relation to Duality of Norms.}
In the special case when the optimization domain $D$ is given by the unit ball of some norm on the space $\X$, we observe the following:
\begin{observation}\label{obs:gapDualNorm}
For optimization over any domain $D=\SetOf{x\in\X}{\norm{x} \le 1}$ being  the unit ball of some norm $\norm{.}$, the duality gap for the optimization problem $\displaystyle\min_{x\in D} f(x)$ is given by
\[
g(x,d_x) = \norm{d_x}_* + \langle x, d_x \rangle \ ,
\]
where $\norm{.}_*$ is the dual norm of $\norm{.}$.
\end{observation}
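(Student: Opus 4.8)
The plan is to start directly from the closed form of the duality gap already established in~(\ref{eq:defGap}), namely
\[
g(x,d_x) = \max_{y\in D} \, \langle x-y, d_x \rangle \ ,
\]
which holds for \emph{any} compact domain $D$. Thus the entire task reduces to evaluating this single maximum in the special case where $D=\SetOf{y\in\X}{\norm{y}\le 1}$ is the unit ball of the norm $\norm{.}$. No further use of the function $f$ or of convexity is needed; everything is a computation with the fixed vector $d_x$.

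First I would split the linear functional inside the maximum into its two terms, writing $\langle x-y,d_x\rangle = \langle x,d_x\rangle - \langle y,d_x\rangle$. Since the term $\langle x,d_x\rangle$ does not depend on the optimization variable $y$, it can be pulled out of the maximum, leaving
\[
g(x,d_x) = \langle x,d_x\rangle + \max_{y\in D} \, \langle -y, d_x \rangle = \langle x,d_x\rangle + \max_{y\in D} \, \langle y, -d_x \rangle \ .
\]

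Next I would identify the remaining maximum with the dual norm. Recall that the dual norm is defined by $\norm{w}_* = \max_{\norm{y}\le 1}\langle y,w\rangle$, i.e. precisely as a maximum of $\langle y,w\rangle$ over the unit ball $D$. Here the relevant vector is $w=-d_x$, so $\max_{y\in D}\langle y,-d_x\rangle = \norm{-d_x}_*$. Using that $\norm{.}_*$ is itself a norm (equivalently, that the unit ball $D$ is symmetric, so $y\in D \iff -y\in D$), we get $\norm{-d_x}_* = \norm{d_x}_*$. Substituting back yields $g(x,d_x) = \norm{d_x}_* + \langle x,d_x\rangle$, as claimed.

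This argument is essentially a one-line computation, so there is no substantial obstacle; the only point requiring care is the sign bookkeeping, in particular making sure the term $\langle x,d_x\rangle$ survives with a plus sign and that the maximum of $\langle y,-d_x\rangle$ is correctly matched to the dual norm. The symmetry of the unit ball (or, equivalently, the fact that the dual norm is even) is exactly what makes the sign of $d_x$ inside the dual norm irrelevant, which is the only slightly non-formal step in the derivation.
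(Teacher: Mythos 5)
Your proof is correct and takes essentially the same route as the paper's own one-line argument, which likewise just unfolds the definition~(\ref{eq:defGap}) as $g(x,d_x) = \max_{y\in D}\,\langle y,-d_x\rangle + \langle x,d_x\rangle$ and matches the remaining maximum with the definition of the dual norm $\norm{.}_*$. The only difference is that you spell out the sign bookkeeping, in particular the step $\norm{-d_x}_* = \norm{d_x}_*$ via the symmetry of the unit ball, which the paper leaves implicit.
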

\begin{proof}
Directly by the definitions of the dual norm $\norm{x}_* = \sup_{\norm{y}\le 1} \langle y, x \rangle$, and the duality gap $g(x,d_x) = \max_{y\in D} \, \langle y, -d_x\rangle + \langle x, d_x\rangle$ as in~(\ref{eq:defGap}).
\end{proof}

\section{A Projection-Free First-Order Method for Convex Optimization}\label{sec:optGenConvex}

\subsection{The Algorithm}

In the following we will generalize the sparse greedy algorithm of~\cite{Frank:1956vp} and its analysis by~\cite{Clarkson:2010hv} to convex optimization over arbitrary compact convex sets $D \subseteq \X$ of a vector space. More formally, we assume that the space $\X$ is a Hilbert space, and consider problems of the form~(\ref{eq:optGenConvex}), i.e.,
\[
\mini_{x \in D} \, f(x) \ .
\]
Here we suppose that the objective function $f$ is differentiable over the domain~$D$, and that for any $x\in D$, we are given the gradient $\nabla f(x)$ via an oracle.

The existing algorithms so far did only apply to convex optimization over the simplex (or convex hulls in some cases)~\cite{Clarkson:2010hv}, or over the spectahedron of PSD matrices~\cite{Hazan:2008kz}, or then did not provide guarantees on the duality gap.
Inspired by the work of Hazan~\cite{Hazan:2008kz}, we can also relax the requirement of \emph{exactly} solving the linearized problem in each step, to just computing \emph{approximations} thereof, while keeping the same convergence results. This allows for more efficient steps in many applications.

Also, our algorithm variant works for arbitrary starting points, without needing to compute the best initial ``starting vertex'' of $D$ as in~\cite{Clarkson:2010hv}.

\paragraph{The Primal-Dual Idea.}
We are motivated by the geometric interpretation of the ``poor man's'' duality gap, as explained in the previous Section~\ref{sec:poorDual}. This duality gap is the maximum difference of the current value $f(x)$, to the linear approximation to $f$ at the currently fixed point $x$, where the linear maximum is taken over the domain $D$. This observation leads to the algorithmic idea of directly using the current linear approximation (over the convex domain $D$) to obtain a good direction for the next step, automatically staying in the feasible region.

The general optimization method with its two precision variants is given in Algorithm~\ref{alg:greedyConvex}. For the approximate variant, the constant $C_f>0$ is an upper bound on the curvature of the objective function $f$, which we will explain below in more details.

\begin{algorithm}[h!]
  \caption{Greedy on a Convex Set}
  \label{alg:greedyConvex}
\begin{algorithmic}
  \STATE {\bfseries Input:} Convex function $f$, convex set $D$, target accuracy $\varepsilon$
  \STATE {\bfseries Output:} $\varepsilon$-approximate solution for problem~(\ref{eq:optGenConvex})
  \STATE Pick an arbitrary starting point $x^{(0)} \in D$
  \FOR{$k=0\dots\infty$}
  \STATE Let $\alpha := \frac2{k+2}$
  \STATE \quad Compute $s := \exactLin\left(\nabla f(x^{(k)}),D\right)$
  \hspace{0.4cm}{\small\COMMENT{Solve the linearized primitive task exactly}}
  \STATE ---or---
  \STATE \quad Compute $s := \approxLin\left(\nabla f(x^{(k)}),D,\alpha C_f\right)$
  \hspace{0.22cm}{\small\COMMENT{Approximate the linearized problem}}
  \STATE Update $x^{(k+1)}:=x^{(k)}+\alpha(s - x^{(k)})$
  \ENDFOR
\end{algorithmic}
\end{algorithm}

\begin{figure}[h!]
\vspace{-0.1em}
\begin{center}
\includegraphics[width=0.5\columnwidth]{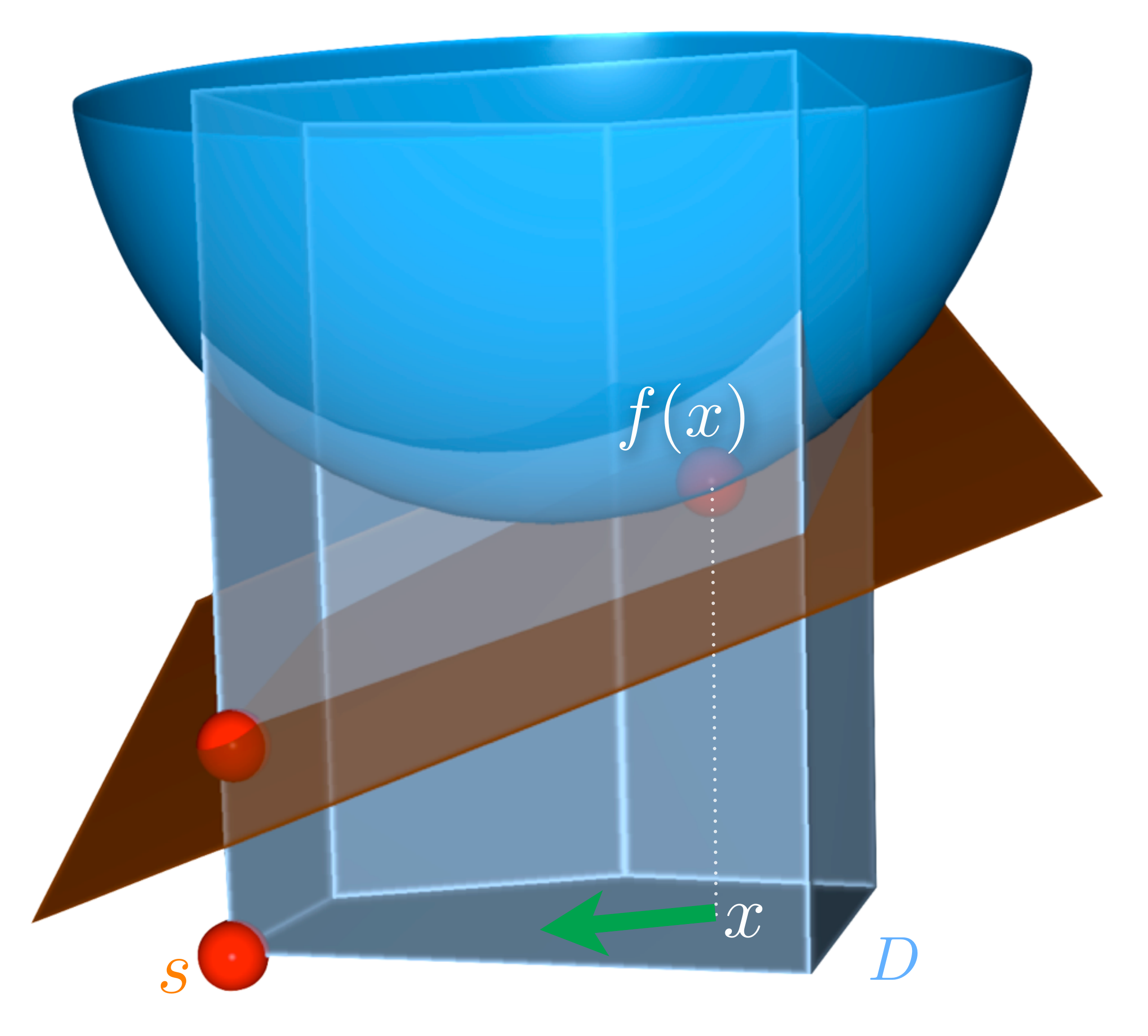}\vspace{-0.6em}
\caption{Visualization of a step of Algorithm \ref{alg:greedyConvex}, moving from the current point $x=x^{(k)}$ towards a linear minimizer $s\in D$. 
Here the two-dimensional domain $D$ is part of the ground plane, and we plot the function values vertically.
Visualization by Robert Carnecky.}
\label{fig:greedyConvex}
\end{center}
\vspace{-0.6em}
\end{figure} 

\paragraph{The Linearized Primitive.}
The internal ``step direction'' procedure $\exactLin(c,D)$ used in Algorithm~\ref{alg:greedyConvex} is a method that minimizes the linear function $\langle x,c\rangle$ over the compact convex domain $D$. Formally it must return a point $s \in D$ such that $\langle s,c\rangle = \displaystyle\min_{y\in D} \, \langle y,c\rangle$.  In terms of the smooth convex optimization literature, the vectors $y$ that have negative scalar product with the gradient, i.e. $\langle y,\nabla f(x)\rangle < 0$, are called \emph{descent directions}, see e.g.~\cite[Chapter 9]{Boyd:2004uz}. %
The main difference to classical convex optimization is that we always choose descent steps staying in the domain $D$, where traditional gradient descend techniques usually use arbitrary directions and need to project back onto $D$ after each step. We will comment more on this analogy in Section~\ref{subsec:optClassic}.

In the alternative interpretation of our duality concept from Section~\ref{sec:poorDual}, the linearized sub-task means that we search for a point $s$ that ``realizes'' the current duality gap $g(x)$, that is the distance to the linear approximation, as given in~(\ref{eq:defGap}).

In the special case that the set $D$ is given by an intersection of linear constraints, this sub-task is exactly equivalent to \emph{linear programming}, as already observed by~\cite[Section 6]{Frank:1956vp}. However, for many other representations of important specific feasible domains $D$, this internal primitive operation is significantly easier to solve, as we will see in the later sections.

The alternative approximate variant of Algorithm~\ref{alg:greedyConvex} uses the procedure $\approxLin\left(c,D,\varepsilon'\right)$ as the internal ``step-direction generator''. Analogously to the exact primitive, this procedure \emph{approximates} the minimum of the linear function $\langle x,c\rangle$ over the convex domain $D$, with additive error $\varepsilon' > 0$. Formally $\approxLin\left(c,D,\varepsilon'\right)$ must return a point $s \in D$ such that $\langle s,c\rangle \le \displaystyle\min_{y\in D} \, \langle y,c\rangle + \varepsilon'$. For several applications, this can be done significantly more efficiently than the exact variant, see e.g. the applications for semidefinite programming in Section~\ref{sec:algTrace}.

\paragraph{The Curvature.}
Everything we need for the analysis of Algorithm~\ref{alg:greedyConvex} is that the linear approximation to our (convex) function $f$ at any point $x$ does not deviate from $f$ by too much, when taken over the whole optimization domain $D$. 

The \emph{curvature constant} $C_{f}$ of a convex and differentiable function $f:\R^n\rightarrow\R$, with respect to the compact domain $D$ is defined as.
\begin{equation}\label{eq:Cf}
  C_f := \sup_{\substack{x,s\in D, \\
                      \alpha\in[0,1],\\
                      y = x+\alpha(s-x)}}
          \textstyle \frac{1}{\alpha^2}\left( f(y)-f(x)-\langle y-x, \nabla f(x)\rangle \right) \ .\vspace{-2pt}
\end{equation}
A motivation to consider this quantity follows if we imagine our optimization procedure at the current point $x=x^{(k)}$, and choosing the next iterate as $y=x^{(k+1)}:=x+\alpha(s-x)$. Bounded $C_f$ then means that the deviation of $f$ at $y$ from the ``best'' linear prediction given by $\nabla f(x)$ is bounded, where the acceptable deviation is weighted by the inverse of the squared step-size $\alpha$. For linear functions $f$ for example, it holds that $C_f=0$. 

The defining term $f(y)-f(x)-\langle y-x, d_x \rangle$ is also widely known as the \emph{Bregman divergence} defined by $f$.
The quantity $C_f$ turns out to be small for many relevant applications, some of which we will discuss later, or see also~\cite{Clarkson:2010hv}.

The assumption of bounded curvature $C_f$ is indeed very similar to a Lipschitz assumption on the gradient of $f$, see also the discussion in Sections~\ref{subsec:Curvature} and~\ref{subsec:optClassic}. In the optimization literature, this property is sometimes also called \emph{$C_f$-strong smoothness}.

It will not always be possible to compute the constant $C_f$ exactly. However, for all algorithms in the following, and also their analysis, it is sufficient to just use some upper bound on $C_f$. We will comment in some more details on the curvature measure in Section~\ref{subsec:Curvature}.

\paragraph{Convergence.}
The following theorem shows that after $O\big(\frac1\varepsilon\big)$ many iterations, Algorithm~\ref{alg:greedyConvex} obtains an $\varepsilon$-approximate solution. The analysis essentially follows the approach of~\cite{Clarkson:2010hv}, inspired by the earlier work of~\cite{Frank:1956vp,Dunn:1978di,Patriksson:1993bl} and~\cite{TongZhang:2003da}.
Later, in Section~\ref{subsec:vecLowerBound}, we will show that this convergence rate is indeed best possible for this type of algorithm, when considering optimization over the unit-simplex. More precisely, we will show that the dependence of the sparsity on the approximation quality, as given by the algorithm here, is best possible up to a constant factor. Analogously, for the case of semidefinite optimization with bounded trace, we will prove in Section~\ref{sec:matLowerBound} that the obtained (low) rank of the approximations given by this algorithm is optimal, for the given approximation quality.

\begin{theorem}[Primal Convergence]\label{thm:primalGreedy}
For each $k\ge1$, the iterate $x^{(k)}$ of the exact variant of Algorithm~\ref{alg:greedyConvex} satisfies
\[
f(x^{(k)}) - f(x^*) \le \frac{4C_f}{k+2} \ ,
\]
where $x^*\in D$ is an optimal solution to problem~(\ref{eq:optGenConvex}). For the approximate variant of Algorithm~\ref{alg:greedyConvex}, it holds that 
\[
f(x^{(k)}) - f(x^*) \le \frac{8C_f}{k+2} \ .
\]
(In other words both algorithm variants deliver a solution of primal error at most $\varepsilon$ after $O(\frac1\varepsilon)$ many iterations.)
\end{theorem}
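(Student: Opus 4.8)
The plan is to prove the bound by induction on $k$, establishing a one-step improvement inequality for the primal error and then showing that the recursion $h(x^{(k+1)}) \le (1-\alpha)h(x^{(k)}) + \alpha^2 C_f$ with $\alpha = \frac{2}{k+2}$ resolves to the claimed rate. Here I write $h(x) := f(x) - f(x^*)$ for the primal error, which is what we want to bound.

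\emph{The key one-step estimate.} First I would use the curvature definition~(\ref{eq:Cf}) directly. Since $x^{(k+1)} = x^{(k)} + \alpha(s - x^{(k)})$ with $s, x^{(k)} \in D$ and $\alpha \in [0,1]$, the definition of $C_f$ immediately gives
\[
f(x^{(k+1)}) \le f(x^{(k)}) + \alpha \langle s - x^{(k)}, \nabla f(x^{(k)})\rangle + \alpha^2 C_f \ .
\]
Now comes the crucial move: for the exact variant, $s$ minimizes $\langle y, \nabla f(x^{(k)})\rangle$ over $D$, so $\langle s - x^{(k)}, \nabla f(x^{(k)})\rangle = -g(x^{(k)})$, the duality gap from~(\ref{eq:defGap}). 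By inequality~(\ref{eq:duGapGePrGap}) the gap dominates the primal error, $g(x^{(k)}) \ge h(x^{(k)})$, hence $\langle s - x^{(k)}, \nabla f(x^{(k)})\rangle \le -h(x^{(k)})$. Substituting yields the recursion
\[
h(x^{(k+1)}) \le (1-\alpha)\, h(x^{(k)}) + \alpha^2 C_f \ .
\]
For the approximate variant, $\approxLin$ introduces an additive error of $\alpha C_f$ in the linear subproblem, so the inner product is bounded by $-g(x^{(k)}) + \alpha C_f$, producing an extra $\alpha^2 C_f$ term and explaining the factor-of-two worse constant.

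\emph{Resolving the recursion.} Next I would induct on $k$ to show $h(x^{(k)}) \le \frac{4C_f}{k+2}$ (exact case). For the base case I would need to verify the bound at a small index; one clean choice is to check that after the very first step ($k=0 \to k=1$), since $\alpha_0 = 1$, the recursion gives $h(x^{(1)}) \le C_f \le \frac{4C_f}{3}$, establishing $k=1$. For the inductive step, substitute $\alpha = \frac{2}{k+2}$ into the recursion and use the induction hypothesis:
\[
h(x^{(k+1)}) \le \Big(1 - \tfrac{2}{k+2}\Big)\tfrac{4C_f}{k+2} + \tfrac{4}{(k+2)^2} C_f = \tfrac{4C_f}{k+2}\cdot\tfrac{k}{k+2} + \tfrac{4C_f}{(k+2)^2} = \tfrac{4C_f(k+1)}{(k+2)^2} \ .
\]
It then remains to check the elementary inequality $\frac{k+1}{(k+2)^2} \le \frac{1}{k+3}$, which holds since $(k+1)(k+3) = k^2+4k+3 \le k^2 + 4k + 4 = (k+2)^2$, completing the induction. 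The approximate case follows identically with the constant $4$ replaced by $8$.

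\emph{The main obstacle.} The conceptually important step — and the one that makes the whole argument work — is the substitution that replaces the descent term $\langle s - x^{(k)}, \nabla f(x^{(k)})\rangle$ by $-g(x^{(k)})$ and then invokes weak duality (Lemma~\ref{lem:weakDual}) through~(\ref{eq:duGapGePrGap}) to lower-bound the gap by the (unknown) primal error. This is where the ``poor man's'' duality framework earns its keep: without it, one would be stuck with a self-referential descent quantity rather than a clean recursion in $h$. The remaining work is just the arithmetic of pinning down the recursion's solution and handling the base case and the off-by-one in the approximate variant's error bookkeeping; these are routine once the one-step inequality is in place.
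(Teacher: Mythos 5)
Your proposal is correct and takes essentially the same route as the paper's own proof: your one-step estimate is exactly the paper's Lemma~\ref{lem:step}, and the induction --- base case via $\alpha^{(0)}=1$, then the recursion $h(x^{(k+1)}) \le (1-\alpha)h(x^{(k)}) + \alpha^2 C_f$ resolved with the elementary inequality $(k+1)(k+3)\le(k+2)^2$ --- mirrors the paper's argument step for step. Your treatment of the approximate variant (the extra $\alpha^2 C_f$ from the tolerance $\varepsilon'=\alpha C_f$, doubling the constant) likewise coincides with the paper's.
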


The proof of the above theorem on the convergence-rate of the primal error crucially depends on the following Lemma~\ref{lem:step} on the improvement in each iteration. 
We recall from Section~\ref{sec:poorDual} that the duality gap for the general convex problem~(\ref{eq:optGenConvex}) over the domain $D$ is given by $g(x) = \displaystyle\max_{s\in D} \, \langle x-s, \nabla f(x)\rangle$. 

\begin{lemma}\label{lem:step}
For any step $x^{(k+1)}:=x^{(k)}+\alpha(s - x^{(k)})$ with arbitrary step-size $\alpha\in[0,1]$, it holds that
\[
  f(x^{(k+1)}) \le f(x^{(k)}) - \alpha g(x^{(k)}) + \alpha^2 C_f 
\]
if $s$ is given by $s := \exactLin\left(\nabla f(x),D\right)$.

If the approximate primitive $s := \approxLin\left(\nabla f(x),D,\alpha C_f\right)$ is used instead, then it holds that
\[
  f(x^{(k+1)}) \le f(x^{(k)}) - \alpha g(x^{(k)}) + 2\alpha^2 C_f  \ .
\]
\end{lemma}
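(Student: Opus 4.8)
The plan is to start from the definition of the curvature constant $C_f$ and massage the step-improvement inequality out of it directly. The key observation is that the new iterate $x^{(k+1)} = x^{(k)} + \alpha(s - x^{(k)})$ has exactly the form $y = x + \alpha(s-x)$ appearing in the supremum~\eqref{eq:Cf} defining $C_f$. So first I would write down, for $x = x^{(k)}$ and this particular $y = x^{(k+1)}$, the inequality that the defining term of $C_f$ is at most $C_f$ itself:
\[
f(x^{(k+1)}) - f(x^{(k)}) - \langle x^{(k+1)} - x^{(k)}, \nabla f(x^{(k)})\rangle \le \alpha^2 C_f \ .
\]
Rearranging, this reads $f(x^{(k+1)}) \le f(x^{(k)}) + \langle x^{(k+1)} - x^{(k)}, \nabla f(x^{(k)})\rangle + \alpha^2 C_f$.

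**Next**, I would substitute $x^{(k+1)} - x^{(k)} = \alpha(s - x^{(k)})$ into the inner product to get the linear term $\alpha \langle s - x^{(k)}, \nabla f(x^{(k)})\rangle$. For the exact variant, $s = \exactLin(\nabla f(x^{(k)}),D)$ minimizes the linear function, so $\langle s, \nabla f(x^{(k)})\rangle = \min_{y\in D}\langle y, \nabla f(x^{(k)})\rangle$. By the definition of the duality gap $g(x^{(k)}) = \max_{y\in D}\langle x^{(k)} - y, \nabla f(x^{(k)})\rangle$, this linear term equals exactly $-\alpha\, g(x^{(k)})$. Plugging in yields
\[
f(x^{(k+1)}) \le f(x^{(k)}) - \alpha g(x^{(k)}) + \alpha^2 C_f \ ,
\]
which is the first claim.

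**For the approximate variant**, the only change is that $s = \approxLin(\nabla f(x^{(k)}),D,\alpha C_f)$ satisfies $\langle s, \nabla f(x^{(k)})\rangle \le \min_{y\in D}\langle y, \nabla f(x^{(k)})\rangle + \alpha C_f$ rather than with equality. Hence the linear term is bounded by $-\alpha\, g(x^{(k)}) + \alpha \cdot \alpha C_f = -\alpha\, g(x^{(k)}) + \alpha^2 C_f$, and combining with the curvature bound $\alpha^2 C_f$ gives the extra additive $\alpha^2 C_f$, for a total of $2\alpha^2 C_f$. This accounts exactly for the factor-of-two difference between the two bounds.

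**The main (mild) obstacle** I anticipate is purely bookkeeping: making sure the additive approximation error $\varepsilon' = \alpha C_f$ fed into $\approxLin$ is correctly weighted by the leading $\alpha$ coming from the step length, so that it contributes $\alpha \cdot \alpha C_f = \alpha^2 C_f$ and not something of the wrong order. There is no genuine difficulty here — the result is essentially a one-line consequence of the curvature definition plus the defining property of the linear oracle — but one must be careful that the argument $y = x^{(k+1)}$ indeed lies in $D$ (it does, being a convex combination of $x^{(k)}, s \in D$ with $\alpha \in [0,1]$) so that the supremum in~\eqref{eq:Cf} legitimately dominates the expression for this choice of $x$, $s$, and $\alpha$.
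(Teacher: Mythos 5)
Your proposal is correct and follows essentially the same route as the paper's proof: apply the curvature definition~(\ref{eq:Cf}) to the pair $x^{(k)}, x^{(k+1)}$ to get the quadratic term $\alpha^2 C_f$, then use the oracle's defining property to identify $\alpha\langle s-x^{(k)}, \nabla f(x^{(k)})\rangle$ with $-\alpha g(x^{(k)})$ (plus $\alpha\varepsilon' = \alpha^2 C_f$ in the approximate case). The only cosmetic difference is that the paper proves the approximate case first and obtains the exact case by setting $\varepsilon' = 0$, whereas you argue in the opposite order.
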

\begin{proof}
  We write $x := x^{(k)}$, $y := x^{(k+1)} = x+\alpha(s-x)$, and $d_x := \nabla f(x)$ to simplify the notation, and first prove the second part of the lemma. We use the definition of the curvature constant $C_f$ of our convex function $f$, to obtain
  \[
  \begin{array}{rl}
    f(y) = & f(x+\alpha(s-x)) \\
    \le & f(x) + \alpha \langle s-x, d_x\rangle + \alpha^2C_f \ .
  \end{array}
  \]
  Now we use that the choice of $s := \approxLin\left(d_x,D,\varepsilon'\right)$ is a good ``descent direction'' on the linear approximation to $f$ at $x$. Formally, we are given a point $s$ that satisfies $\langle s, d_x\rangle \le \displaystyle\min_{y\in D} \langle y,d_x\rangle + \varepsilon'$,
  or in other words we have 
  \[
  \begin{array}{rl}
  \langle s-x, d_x\rangle 
  \le& \min_{y\in D} \langle y,d_x\rangle - \langle x, d_x\rangle + \varepsilon' \\
  =& -g(x,d_x) + \varepsilon' \ ,
  \end{array}
  \]
  from the definition~(\ref{eq:defGap}) of the duality gap $g(x) = g(x,d_x)$. Altogether, we obtain
  \[
  \begin{array}{rl}
  f(y) \le& f(x) + \alpha (-g(x) + \varepsilon') + \alpha^2C_f \\
         =& f(x) - \alpha g(x) + 2\alpha^2 C_f \ ,
  \end{array}
  \]
  the last equality following by our choice of $\varepsilon' = \alpha C_f$. This proves the lemma for the approximate case. The first claim for the exact linear primitive $\exactLin()$ follows by the same proof for $\varepsilon' = 0$.
\end{proof}

Having Lemma~\ref{lem:step} at hand, the proof of our above primal convergence Theorem~\ref{thm:primalGreedy} now follows along the same idea as in~\cite[Theorem 2.3]{Clarkson:2010hv} or~\cite[Theorem 1]{Hazan:2008kz}. Note that a weaker variant of Lemma~\ref{lem:step} was already proven by~\cite{Frank:1956vp}. 

\begin{proof}[Proof of Theorem~\ref{thm:primalGreedy}]
  From Lemma~\ref{lem:step} we know that for every step of the exact variant of Algorithm~\ref{alg:greedyConvex}, it holds that $f(x^{(k+1)}) \le f(x^{(k)}) - \alpha g(x^{(k)}) + \alpha^2 C_f$.
  
  Writing $h(x) := f(x) - f(x^*)$ for the (unknown) primal error at any point $x$, this implies that
  \begin{equation}\label{eq:hStep}
  \begin{array}{rl}
  h(x^{(k+1)}) \le& h(x^{(k)}) - \alpha g(x^{(k)}) + \alpha^2 C_f \\
                    \le& h(x^{(k)}) - \alpha h(x^{(k)}) + \alpha^2 C_f  \\
                    =& (1- \alpha) h(x^{(k)}) + \alpha^2 C_f 
  \ ,
  \end{array}
  \end{equation}
  where we have used weak duality $h(x) \le g(x)$ as given by in~(\ref{eq:duGapGePrGap}). %
  We will now use induction over $k$ in order to prove our claimed bound, i.e.,
  \[
  h(x^{(k+1)}) \le \frac{4C_f}{k+1+2} ~~~~~~ k=0\dots\infty \ .
  \]
  The base-case $k=0$ follows from~(\ref{eq:hStep}) applied for the first step of the algorithm, using $\alpha = \alpha^{(0)} = \frac{2}{0+2}=1$. 
  
  Now considering $k\ge1$, the bound~(\ref{eq:hStep}) gives us
  \[
  \begin{array}{rl}
  h(x^{(k+1)}) \le& (1- \alpha^{(k)}) h(x^{(k)}) + {\alpha^{(k)}}^2 C_f  \\[2pt]
                      =& (1-\frac2{k+2}) h(x^{(k)}) + (\frac2{k+2})^2 C_f  \\[2pt]
                      \le& (1-\frac2{k+2}) \frac{4C_f}{k+2} + (\frac2{k+2})^2 C_f \ ,
  \end{array}
  \]
  where in the last inequality we have used the induction hypothesis for $x^{(k)}$. Simply rearranging the terms gives
  \[
  \begin{array}{rl}
  h(x^{(k+1)}) \le& \frac{4C_f}{k+2} - \frac{8C_f}{(k+2)^2} + \frac{4C_f}{(k+2)^2} \\
                      =& 4C_f \left( \frac1{k+2} - \frac1{(k+2)^2} \right) \\[2pt]
                      =& \frac{4C_f}{k+2} \frac{k+2-1}{k+2} \\[3pt]
                     \le& \frac{4C_f}{k+2} \frac{k+2}{k+3} \\[3pt]
                      =& \frac{4C_f}{k+3} \ ,
  \end{array}
  \]
  which is our claimed bound for $k\ge 1$. 
  
  The analogous claim for Algorithm~\ref{alg:greedyConvex} using the approximate linear primitive $\approxLin()$ follows from the exactly same argument, by replacing every occurrence of $C_f$ in the proof here by $2C_f$ instead (compare to Lemma~\ref{lem:step} also).
\end{proof}

\subsection{Obtaining a Guaranteed Small Duality Gap}\label{subsec:smallGap}
From the above Theorem~\ref{thm:primalGreedy} on the convergence of Algorithm~\ref{alg:greedyConvex}, we have obtained small primal error. %
However, the optimum value $f(x^*)$ is unknown in most practical applications, where we would prefer to have an easily computable measure of the current approximation quality, for example as a stopping criterion of our optimizer in the case that $C_f$ is unknown. The duality gap $g(x)$ that we defined in Section~\ref{sec:poorDual} satisfies these requirements, and always upper bounds the primal error $f(x)-f(x^*)$.

By a nice argument of Clarkson~\cite[Theorem 2.3]{Clarkson:2010hv}, the convergence on the simplex optimization domain can be extended to obtain the stronger property of guaranteed small duality gap $g(x^{(k)}) \le \varepsilon$, after at most $O(\frac1\varepsilon)$ many iterations. 
This stronger convergence result was not yet known in earlier papers of~\cite{Frank:1956vp,Dunn:1978di,Jones:1992ek,Patriksson:1993bl} and~\cite{TongZhang:2003da}. Here we will generalize the primal-dual convergence to arbitrary compact convex domains.
The proof of our theorem below again relies on Lemma~\ref{lem:step}.

\begin{theorem}[Primal-Dual Convergence]\label{thm:primalDualGreedy}
Let $K := \left\lceil\frac{4C_f}{\varepsilon}\right\rceil$. We run the exact variant of Algorithm~\ref{alg:greedyConvex} for $K$ iterations (recall that the step-sizes are given by $\alpha^{(k)} := \frac2{k+2}$, $0\le k\le K$), and then continue for another $K+1$ iterations, now with the fixed step-size $\alpha^{(k)} := \frac2{K+2}$ for $K\le k\le 2K+1$.

Then the algorithm has an iterate $x^{(\hat k)}$, $K\le \hat k\le 2K+1$, with duality gap bounded by 
\[
g(x^{(\hat k)}) \le \varepsilon \ .
\]

The same statement holds for the approximate variant of Algorithm~\ref{alg:greedyConvex}, when setting $K := \left\lceil\frac{8C_f}{\varepsilon}\right\rceil$ instead.
\end{theorem}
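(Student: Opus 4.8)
The plan is to argue by contradiction, exploiting the fact that Lemma~\ref{lem:step} controls not just the primal decrease but also the duality gap $g(x^{(k)})$ appearing in the very same inequality. The key observation is that the primal bound of Theorem~\ref{thm:primalGreedy} alone can never produce a gap certificate, since $g(x) \ge f(x)-f(x^*)$; instead I would extract the gap from the per-step improvement. Rearranging Lemma~\ref{lem:step}, each step with step-size $\alpha$ satisfies $\alpha\, g(x^{(k)}) \le f(x^{(k)}) - f(x^{(k+1)}) + \alpha^2 C_f$, so summing over a block of consecutive steps makes the function values telescope. This is exactly why the theorem switches to a \emph{fixed} step-size $\alpha = \frac{2}{K+2}$ in the second phase: with $\alpha$ constant, the quantity $\sum_k \alpha\, g(x^{(k)})$ telescopes cleanly against $f(x^{(K)}) - f(x^{(2K+1)})$, whereas the diminishing step-sizes of the first phase would not.

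Concretely, I would assume for contradiction that $g(x^{(k)}) > \varepsilon$ for every iterate of the fixed-step phase, i.e.\ for all $K \le k \le 2K$. Summing the exact-variant inequality of Lemma~\ref{lem:step} over these $K+1$ steps gives
\[
f(x^{(2K+1)}) \le f(x^{(K)}) - \alpha \sum_{k=K}^{2K} g(x^{(k)}) + (K+1)\,\alpha^2 C_f \ .
\]
Under the contradiction hypothesis the sum exceeds $(K+1)\varepsilon$. Feeding in the optimality bound $f(x^{(2K+1)}) \ge f(x^*)$ together with the primal guarantee $f(x^{(K)}) - f(x^*) \le \frac{4C_f}{K+2}$ from Theorem~\ref{thm:primalGreedy}, then subtracting $f(x^*)$ and inserting $\alpha = \frac{2}{K+2}$, I obtain the strict inequality
\[
0 < \frac{4C_f}{K+2} - \frac{2(K+1)}{K+2}\varepsilon + \frac{4(K+1)}{(K+2)^2}C_f \ .
\]

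The final step is the arithmetic that makes the contradiction bite. By the choice $K = \lceil \frac{4C_f}{\varepsilon}\rceil$ we have $C_f \le \frac{K\varepsilon}{4}$, so substituting this upper bound and clearing the common denominator $(K+2)^2$ reduces the right-hand side to $\frac{\varepsilon}{(K+2)^2}\bigl(K(K+2) - 2(K+1)(K+2) + K(K+1)\bigr)$, whose bracket collapses to $-3K-4 < 0$. This contradicts the strict positivity just derived, so at least one iterate $x^{(\hat k)}$ with $K \le \hat k \le 2K$ must satisfy $g(x^{(\hat k)}) \le \varepsilon$, which lies inside the claimed range $K \le \hat k \le 2K+1$. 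For the approximate variant I would run the identical argument using the second inequality of Lemma~\ref{lem:step} (an extra factor $2$ on the curvature term) together with the matching primal bound $\frac{8C_f}{K+2}$; formally this amounts to replacing $C_f$ by $2C_f$ everywhere, so the choice $K = \lceil \frac{8C_f}{\varepsilon}\rceil$ makes the same cancellation go through verbatim. The main obstacle I anticipate is bookkeeping rather than conceptual: pinning down the telescoping block and its index range exactly, and checking that the quadratic-in-$K$ cancellation lands on a genuinely negative constant (not merely a negative \emph{asymptotic} term), which is what guarantees the contradiction holds for every admissible $K$.
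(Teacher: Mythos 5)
Your proof is correct and takes essentially the same route as the paper's: assume the duality gap stays above a threshold throughout the fixed-step phase, sum (telescope) the inequality of Lemma~\ref{lem:step}, and combine with the primal guarantee $f(x^{(K)})-f(x^*)\le \frac{4C_f}{K+2}$ from Theorem~\ref{thm:primalGreedy} to force $f$ strictly below $f(x^*)$, a contradiction. The only difference is bookkeeping: you contradict the stronger hypothesis $g(x^{(k)})>\varepsilon$ directly and sum over $K+1$ fixed-size steps (your bracket $-3K-4<0$ is verified correctly, and $C_f\le \frac{K\varepsilon}{4}$ follows from the ceiling), whereas the paper assumes only $g(x^{(k)})>\frac{4C_f}{K+2}$ and sums over $K+2$ steps — so your version even yields the marginally tighter index range $K\le\hat k\le 2K$.
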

\begin{proof}
The proof follows the idea of~\cite[Section 7]{Clarkson:2010hv}.

By our previous Theorem~\ref{thm:primalGreedy} we already know that the primal error satisfies $h(x^{(K)}) = f(x^{(K)})-f(x^*) \le \frac{4C_f}{K+2}$ after $K$ iterations.
In the subsequent $K+1$ iterations, we will now suppose that $g(x^{(k)})$ always stays larger than $\frac{4C_f}{K+2}$. We will try to derive a contradiction to this assumption.

Putting the assumption $g(x^{(k)})>\frac{4C_f}{K+2}$ into the step improvement bound given by Lemma~\ref{lem:step}, we get that
\[
\begin{array}{rl}
  f(x^{(k+1)}) - f(x^{(k)}) \le&  - \alpha^{(k)} g(x^{(k)}) + {\alpha^{(k)}}^2 C_f \\
                     <&  - \alpha^{(k)} \frac{4C_f}{K+2} + {\alpha^{(k)}}^2 C_f 
\end{array}
\]
holds for any step size $\alpha^{(k)} \in (0,1]$. Now using the fixed step-size $\alpha^{(k)} = \frac2{K+2}$ in the iterations $k\ge K$ of Algorithm~\ref{alg:greedyConvex}, this reads as

\[
\begin{array}{rl}
  f(x^{(k+1)}) - f(x^{(k)}) <& - \frac2{K+2} \frac{4C_f}{K+2} + \frac4{(K+2)^2} C_f \vspace{3pt}\\
           =& - \frac{4C_f}{(K+2)^2}
\end{array}
\]
Summing up over the additional steps, we obtain\vspace{-5pt}
\[
\begin{array}{rl}
  f(x^{(2K+2)}) - f(x^{(K)}) =& \displaystyle\sum_{k=K}^{2K+1} f(x^{(k+1)}) - f(x^{(k)})\\
  <& - (K+2)\frac{4C_f}{(K+2)^2}
                                   = -  \frac{4C_f}{K+2}  \ ,
\end{array}
\]
which together with our known primal approximation error $f(x^{(K)}) - f(x^*) \le \frac{4C_f}{K+2}$ would result in $ f(x^{(2K+2)}) - f(x^*) < 0$, a contradiction. Therefore there must exist $\hat k$, $K\le \hat k\le 2K+1$, with $g(x^{(\hat k)}) \le \frac{4C_f}{K+2} \le \varepsilon$.

The analysis for the approximate variant of Algorithm~\ref{alg:greedyConvex} follows using the analogous second bound from Lemma~\ref{lem:step}.
\end{proof}

Following~\cite[Theorem 2.3]{Clarkson:2010hv}, one can also prove a similar primal-dual convergence theorem for the line-search algorithm variant that uses the optimal step-size in each iteration, as we will discuss in the next Section~\ref{subsec:lineSearch}. This is somewhat expected as the line-search algorithm in each step is at least as good as the fixed step-size variant we consider here.

\subsection{Choosing the Optimal Step-Size by Line-Search}\label{subsec:lineSearch}
Alternatively, instead of the fixed step-size $\alpha = \frac2{k+2}$ in Algorithm~\ref{alg:greedyConvex}, one can also find the optimal $\alpha \in [0,1]$ by line-search. This will not improve the theoretical convergence guarantee, but might still be considered in practical applications if the best $\alpha$ is easy to compute. Experimentally, we observed that line-search can improve the numerical stability in particular if approximate step directions are used, which we will discuss e.g. for semidefinite matrix completion problems in Section~\ref{sec:applicationMatCompl}.

Formally, given the chosen direction $s$, we then search for the $\alpha$ of best improvement in the objective function $f$, that is
\begin{equation}\label{eq:genLineSearch}
\alpha := \argmin_{\alpha\in[0,1]} \,f\left(x^{(k)}+\alpha(s - x^{(k)})\right) \ .
\end{equation}
The resulting modified version of Algorithm~\ref{alg:greedyConvex} is depicted again in Algorithm~\ref{alg:greedyConvexLS}, and was precisely analyzed in~\cite{Clarkson:2010hv} for the case of optimizing over the simplex.

\begin{algorithm}[h!]
  \caption{Greedy on a Convex Set, using Line-Search}
  \label{alg:greedyConvexLS}
\begin{algorithmic}
  \STATE {\bfseries Input:} Convex function $f$, convex set $D$, target accuracy $\varepsilon$
  \STATE {\bfseries Output:} $\varepsilon$-approximate solution for problem~(\ref{eq:optVecSimplex})
  \STATE Pick an arbitrary starting point $x^{(0)} \in D$
  \FOR{$k=0\dots\infty$}
  \STATE \quad Compute $s := \exactLin\left(\nabla f(x^{(k)}),D\right)$
  \STATE ---or---
  \STATE \quad Compute $s := \approxLin\left(\nabla f(x^{(k)}),D,\frac{2C_f}{k+2}\right)$
  \STATE Find the optimal step-size $\alpha := \displaystyle\argmin_{\alpha\in[0,1]} \, f\left(x^{(k)}+\alpha(s - x^{(k)})\right)$
  \STATE Update $x^{(k+1)}:=x^{(k)}+\alpha(s - x^{(k)})$
  \ENDFOR
\end{algorithmic}
\end{algorithm}

In many cases, we can solve this line-search analytically in a straightforward manner, by differentiating the above expression with respect to $\alpha$: Consider $f_{\alpha} := f\left(x^{(k+1)}_{(\alpha)} \right) = f\left(x^{(k)} + \alpha \left(s - x^{(k)} \right)\right)$ and compute 
\begin{equation}\label{eq:bestOnLineSegment}
0 \stackrel{\mbox{\scriptsize !}}{=}
 \frac{\partial}{\partial\alpha} f_{\alpha} = \left\langle s - x^{(k)} , \nabla f\big(x^{(k+1)}_{(\alpha)} \big) \right\rangle \ .
\end{equation}
If this equation can be solved for $\alpha$, then the optimal such $\alpha$ can directly be used as the step-size in Algorithm~\ref{alg:greedyConvex}, and the convergence guarantee of Theorem~\ref{thm:primalGreedy} still holds. This is because the improvement in each step will be at least as large as if we were using the older (potentially sub-optimal) fixed choice of $\alpha = \frac2{k+2}$. Here we have assumed that $\alpha^{(k)}\in[0,1]$ always holds, which can be done when using some caution, see also~\cite{Clarkson:2010hv}.

Note that the line-search can also be used for the approximate variant of Algorithm~\ref{alg:greedyConvex}, where we keep the accuracy for the internal primitive method $\approxLin\left(\nabla f(x^{(k)}),D,\varepsilon'\right)$ fixed to $\varepsilon' = \alpha_{\mbox{\tiny fixed}} C_f = \frac{2C_f}{k+2}$. Theorem~\ref{thm:primalGreedy} then holds as as in the original case.

%
%

%\newpage  %TODO: remove in case of shift!
%
\subsection{The Curvature Measure of a Convex Function}\label{subsec:Curvature}

For the case of differentiable $f$ over the space $\X=\R^n$, we recall the definition of the curvature constant $C_f$ with respect to the domain $D \subset \R^n$, as stated in~(\ref{eq:Cf}),
\[
  C_f := \sup_{\substack{x,s\in D, \\
                      \alpha\in[0,1],\\
                      y = x+\alpha(s-x)}}
          \textstyle \frac{1}{\alpha^2}\left( f(y)-f(x)- (y-x)^T \nabla f(x) \right) \ .
\]

An overview of values of $C_f$ for several classes of functions $f$ over domains that are related to the unit simplex can be found in~\cite{Clarkson:2010hv}.

\paragraph{Asymptotic Curvature.}
As Algorithm~\ref{alg:greedyConvex} converges towards some optimal solution $x^*$, it also makes sense to consider the asymptotic curvature $C_f^*$, defined as
\begin{equation}\label{eq:CfAsympt}
  C_f^* := %
                \sup_{\substack{s\in D, \\
                          \alpha\in[0,1],\\
                          y = x^*+\alpha(s-x^*)}}
          \textstyle \frac{1}{\alpha^2}\left( f(y)-f(x^*)- (y-x^*)^T \nabla f(x^*) \right) \ .
\end{equation}
Clearly $C_f^* \le C_f$. As described in~\cite[Section 4.4]{Clarkson:2010hv}, we expect that as the algorithm converges towards $x^*$, also the improvement bound as given by Lemma~\ref{lem:step} should be determined by $C_f^*+o(1)$ instead of $C_f$, resulting in a better convergence speed constant than given Theorem~\ref{thm:primalGreedy}, at least for large $k$.
The class of \emph{strongly convex} functions is an example for which the convergence of the relevant constant towards $C_f^*$ is easy to see, since for these functions, convergence in the function value also imlies convergence in the domain, towards a unique point $x^*$, see e.g.~\cite[Section 9.1.2]{Boyd:2004uz}.

\paragraph{Relating the Curvature to the Hessian Matrix.}
Before we can compare the assumption of bounded curvature $C_f$ to a Lipschitz assumption on the gradient of $f$, we will need to relate $C_f$ to the Hessian matrix (matrix of all second derivatives) of $f$.

Here the idea described in~\cite[Section 4.1]{Clarkson:2010hv} is to make use of the degree-2 Taylor-expansion of our function $f$ at the fixed point $x$, as a function of $\alpha$, which is\vspace{-4pt}
\[
f(x+\alpha(s-x)) 
= f(x) + \alpha(s-x)^T \nabla f(x) + \frac{\alpha^2}{2} (s-x)^T \nabla^2 f(z) (s-x) \ ,
\]
where $z$ is a point on the line-segment $[x,y] \subseteq D \subset \R^d$ between the two points $x\in\R^n$ and $y=x+\alpha(s-x)\in\R^n$.
To upper bound the curvature measure, we can now directly plug in this expression for $f(y)$ into the above definition of $C_f$, obtaining
\begin{equation}\label{eq:CfHess}
C_f \le \sup_{\substack{x,y \in D, \\
                    z \in [x,y] \subseteq D}} 
            \frac12 (y-x)^T \nabla^2 f(z) (y-x) \ .
\end{equation}
From this bound, it follows that $C_f$ is upper bounded by the largest eigenvalue of the Hessian matrix of $f$, scaled with the domain's Euclidean diameter, or formally

\begin{lemma}\label{lem:CfHessBound} 
For any twice differentiable convex function $f$ over a compact convex domain $D$, it holds that
\[
C_f \,\le\, \frac12 \diam(D)^2 \cdot \displaystyle\sup_{z\in D} \, \lmax\left(\nabla^2 f(z)\right) \ .
\]
\end{lemma}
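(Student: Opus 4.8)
The plan is to start directly from the upper bound~(\ref{eq:CfHess}), which already expresses $C_f$ through the Hessian quadratic form, and then estimate that quadratic form by a Rayleigh-quotient argument. Once~(\ref{eq:CfHess}) is available, the whole proof is essentially a two-line chain of inequalities.

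First I would recall the Rayleigh-quotient bound: for any symmetric matrix $M$ and any vector $v$, one has $v^T M v \le \lmax(M)\,\norm{v}_2^2$. Since $f$ is twice differentiable, the Hessian $\nabla^2 f(z)$ is symmetric for every $z\in D$, so applying this with $M=\nabla^2 f(z)$ and $v=y-x$ gives
\[
(y-x)^T \nabla^2 f(z)(y-x) \,\le\, \lmax\!\left(\nabla^2 f(z)\right)\,\norm{y-x}_2^2 \ .
\]

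Next I would bound each factor on the right by its supremum over the domain. Because $x,y\in D$, the Euclidean distance satisfies $\norm{y-x}_2 \le \diam(D)$, and trivially $\lmax(\nabla^2 f(z)) \le \sup_{z\in D}\lmax(\nabla^2 f(z))$. Inserting these two estimates together with the factor $\tfrac12$ and taking the supremum over $x,y\in D$ and $z\in[x,y]$ on the right-hand side of~(\ref{eq:CfHess}) yields exactly the claimed inequality.

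The one place where the convexity hypothesis enters --- and the only real subtlety --- is the step that replaces $\norm{y-x}_2^2$ by the larger quantity $\diam(D)^2$. This substitution preserves the inequality only if the multiplying coefficient $\lmax(\nabla^2 f(z))$ is nonnegative; otherwise enlarging $\norm{y-x}_2^2$ would weaken the bound in the wrong direction. Convexity of $f$ guarantees $\nabla^2 f(z)\succeq 0$ for every $z\in D$, hence $\lmax(\nabla^2 f(z))\ge 0$, which justifies the estimate. I expect this sign consideration to be the only point requiring genuine care; the remaining manipulations are routine.
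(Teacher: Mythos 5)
Your proposal is correct and takes essentially the same route as the paper: both start from the Taylor-expansion bound~(\ref{eq:CfHess}) and estimate the Hessian quadratic form by $\lmax\left(\nabla^2 f(z)\right)\norm{y-x}_2^2$ before passing to $\diam(D)^2$ and the supremum, with convexity entering through positive semidefiniteness of the Hessian. The only cosmetic difference is that you invoke the Rayleigh-quotient characterization of $\lmax$ directly (and use PSD-ness to keep the enlargement $\norm{y-x}_2^2 \le \diam(D)^2$ valid), whereas the paper reaches the same bound via Cauchy--Schwarz and the spectral norm, using PSD-ness to identify $\spectnorm{\nabla^2 f(z)}$ with $\lmax\left(\nabla^2 f(z)\right)$.
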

Note that as $f$ is convex, the Hessian matrix $\nabla^2 f(z)$ is positive semidefinite for all $z$, see e.g.~\cite[Theorem 7.3.6]{Kurdila:2005ur}.
\begin{proof}
Applying the Cauchy-Schwarz inequality to~(\ref{eq:CfHess}) for any $x,y\in D$ (as in the definition of $C_f$), we get
\[
\begin{array}{rl}
(y-x)^T \nabla^2 f(z) (y-x) \le& \norm{y-x}_2 \norm{\nabla^2 f(z) (y-x)}_2  \vspace{3pt}\\
\le& \norm{y-x}_2^2 \spectnorm{\nabla^2 f(z)} \vspace{3pt}\\
\le& \diam(D)^2 \cdot \displaystyle\sup_{z\in D} \lmax\left(\nabla^2 f(z)\right)
\ .
\end{array}
\]
The middle inequality follows from the variational characterization of the matrix spectral norm, i.e. $\spectnorm{A} := \sup_{x\ne 0}\frac{\norm{Ax}_2}{\norm{x}_2}$. Finally, in the last inequality we have used that by convexity of $f$, the Hessian matrix $\nabla^2 f(z)$ is PSD, so that its spectral norm is its largest eigenvalue.  %
\end{proof}

Note that in the case of $D$ being the unit simplex (see also the following Section~\ref{sec:vecSimplex}), we have that $\diam(\Delta_n) = \sqrt{2}$, meaning the scaling factor disappears, i.e. $C_f \le \displaystyle\sup_{z\in \Delta_n} \lmax\left(\nabla^2 f(z)\right)$.

\paragraph{Bounded Curvature vs. Lipschitz-Continuous Gradient.}
Our core assumption on the given optimization problems is that that the curvature $C_f$ of the function is bounded over the domain. Equivalently, this means that the function does not deviate from its linear approximation by too much. Here we will explain that this assumption is in fact very close to the natural assumption that the gradient $\nabla f$ is Lipschitz-continuous, which is often assumed in classical convex optimization literature, where it is sometimes called \emph{$C_f$-strong smoothness}, see e.g.~\cite{Nemirovski:2005wu,Kakade:2009wh} (or~\cite{dAspremont:2008gl} if the gradient information is only approximate). %

\begin{lemma}
Let $f$ be a convex and twice differentiable function, and assume that the gradient $\nabla f$ is Lipschitz-continuous over the domain $D$ with Lipschitz-constant $L>0$. Then
\[
C_f \le \frac12 \diam(D)^2 L \ .
\]
\end{lemma}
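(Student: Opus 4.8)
The plan is to reduce this statement to the already-established Lemma~\ref{lem:CfHessBound}, which bounds $C_f \le \frac12 \diam(D)^2 \sup_{z\in D}\lmax\left(\nabla^2 f(z)\right)$. Granting that lemma, the entire task collapses to showing that the Lipschitz continuity of the gradient with constant $L$ forces $\lmax\left(\nabla^2 f(z)\right) \le L$ for every $z\in D$. Once that pointwise Hessian bound is in hand, substituting it into Lemma~\ref{lem:CfHessBound} immediately yields $C_f \le \frac12 \diam(D)^2 L$, which is exactly the claim.

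First I would make precise what Lipschitz continuity of $\nabla f$ means here, namely $\norm{\nabla f(x) - \nabla f(y)}_2 \le L\norm{x-y}_2$ for all $x,y\in D$. The key bridge to the Hessian is the standard identity, valid along any segment contained in the convex domain $D$,
\[
\nabla f(y) - \nabla f(x) = \int_0^1 \nabla^2 f\big(x+t(y-x)\big)\,(y-x)\, dt \ .
\]
Fixing a point $z\in D$ and a unit vector $v$, I would set $y = z + \eta v$ and $x = z$ for small $\eta>0$, divide by $\eta$, and let $\eta\to 0$. The Lipschitz bound gives $\frac{1}{\eta}\norm{\nabla f(z+\eta v) - \nabla f(z)}_2 \le L$, while the right-hand side of the identity tends to $\norm{\nabla^2 f(z)\, v}_2$ by continuity of the Hessian. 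Hence $\norm{\nabla^2 f(z)\, v}_2 \le L$ for every unit $v$, which by the variational characterization of the spectral norm $\spectnorm{A} = \sup_{v\ne 0}\frac{\norm{Av}_2}{\norm{v}_2}$ means $\spectnorm{\nabla^2 f(z)} \le L$.

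The convexity assumption then does the final work: since $f$ is convex, $\nabla^2 f(z)$ is positive semidefinite for all $z$ (as noted after Lemma~\ref{lem:CfHessBound}), so its spectral norm equals its largest eigenvalue, giving $\lmax\left(\nabla^2 f(z)\right) = \spectnorm{\nabla^2 f(z)} \le L$. Taking the supremum over $z\in D$ and inserting into Lemma~\ref{lem:CfHessBound} completes the argument.

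The step I expect to be the main (and really the only) obstacle is the limiting argument that converts the gradient's Lipschitz constant into the pointwise bound $\spectnorm{\nabla^2 f(z)} \le L$; the interchange of the limit with the integral and the appeal to continuity of $\nabla^2 f$ are the places where one must be a little careful, though they are routine for a twice differentiable $f$. Everything after that is a direct substitution, so I would keep the exposition brief there and simply cite Lemma~\ref{lem:CfHessBound} and the PSD property of the Hessian.
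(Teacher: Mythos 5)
Your proposal is correct and takes essentially the same route as the paper: both reduce the claim to Lemma~\ref{lem:CfHessBound} by establishing the pointwise bound $\lmax\left(\nabla^2 f(z)\right) \le L$, the paper deriving it from the quadratic upper bound~(\ref{eq:lipschitzCurvature}) implied by the Lipschitz assumption, while you differentiate the Lipschitz bound on the gradient directly. One small remark: your appeal to continuity of $\nabla^2 f$ (which is not assumed) is avoidable, since $\frac{1}{\eta}\left(\nabla f(z+\eta v)-\nabla f(z)\right) \to \nabla^2 f(z)\,v$ holds already by the definition of twice differentiability, so the integral identity is unnecessary.
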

\begin{proof}
Having
$\norm{\nabla f(y)-\nabla f(x)}_2 \le L \norm{y-x}_2$ $\forall x,y \in D$ by the Cauchy-Schwarz inequality implies that $(y-x)^T(\nabla f(y)-\nabla f(x)) \le L \norm{y-x}_2^2$, so that%
\begin{equation}\label{eq:lipschitzCurvature}
f(y) \le f(x) + (y-x)^T \nabla f(x) + \frac L2 \norm{y-x}_2^2 \ .
\end{equation}

If $f$ is twice differentiable, it can directly be seen that the above condition implies that $L\cdot \id \succeq \nabla^2 f(z)$ holds for the Hessian of $f$, that is $\lmax\left(\nabla^2 f(z)\right) \le L$.  %

Together with our result from Lemma~\ref{lem:CfHessBound}%
, the claim follows.
\end{proof}
 
The above bound~(\ref{eq:lipschitzCurvature}) which is implied by Lipschitz-continuous gradient means that the function is not ``curved'' by more than $L$ in some sense, which is an interesting property. In fact this is exactly the opposite inequality compared to the property of \emph{strong convexity}, which is an assumption on the function $f$ that we do not impose here. Strong convexity on a compact domain means that the function is always curved \emph{at least} by some constant (as our $L$).
We just note that for strongly convex functions, ``accelerated'' algorithms with an even faster convergence of $\frac{1}{k^2}$ (meaning $O(\frac{1}{\sqrt{\varepsilon}})$ steps) do exist~\cite{Nesterov:2004ic,Nesterov:2007wm}. %

\subsection{Optimizing over Convex Hulls}\label{subsec:convHull}
In the case that the optimization domain $D$ is given as the convex hull of a (finite or infinite) subset $V \subset \X$, i.e.
\[
D = \conv(V) \ ,
\]
then it is particularly easy to solve the linear optimization subproblems as needed in our Algorithm~\ref{alg:greedyConvex}. Recall that $\conv(V)$ is defined as the set of all finite convex combinations $\sum_i\alpha_iv_i$ for a finite subset $\{v_1,\dots,v_k\} \subseteq V$, while $V$ can also be an infinite set.

\begin{lemma}[Linear Optimization over Convex Hulls]\label{lem:convHull}
Let $D = \conv(V)$ for any subset $V \subset \X$, and $D$ compact. Then any linear function $y \mapsto \langle y,c \rangle$ will attain its minimum and maximum over $D$ at some ``vertex'' $v\in V$.
\end{lemma}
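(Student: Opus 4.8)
The plan is to reduce the optimization of the linear functional $y \mapsto \langle y, c\rangle$ to an evaluation on the generating set $V$, exploiting that every point of $D = \conv(V)$ is by definition a \emph{finite} convex combination of elements of $V$, together with the fact that $D$ is compact so that the minimum is actually attained. I would treat the minimum; the maximum then follows verbatim after replacing $c$ by $-c$.

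First I would record the elementary averaging inequality. Take any $y \in D$ and write it as $y = \sum_{i=1}^k \alpha_i v_i$ with $v_i \in V$, $\alpha_i \ge 0$, and $\sum_i \alpha_i = 1$. By linearity of the inner product, $\langle y, c\rangle = \sum_i \alpha_i \langle v_i, c\rangle \ge \min_{1\le i\le k}\langle v_i, c\rangle \ge \inf_{v\in V}\langle v, c\rangle$. Since this holds for every $y\in D$, and since conversely $V \subseteq D$, I would conclude the infimum identity $\inf_{y\in D}\langle y,c\rangle = \inf_{v\in V}\langle v,c\rangle$.

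The key step is then to upgrade this infimum identity to a minimum actually attained at a genuine vertex. Because $D$ is compact and $y \mapsto \langle y,c\rangle$ is continuous, the minimum over $D$ is attained at some $y^* \in D$. Writing $y^* = \sum_i \alpha_i v_i$ as a finite convex combination, I have $\langle y^*, c\rangle = \sum_i \alpha_i \langle v_i,c\rangle$, a convex combination of the numbers $\langle v_i, c\rangle$, each of which is $\ge \langle y^*,c\rangle$ since $y^*$ minimizes and every $v_i \in D$. A convex combination can equal its smallest admissible summand only if every term carrying positive weight equals that value; hence any index $i$ with $\alpha_i>0$ satisfies $\langle v_i,c\rangle = \langle y^*,c\rangle = \min_{y\in D}\langle y,c\rangle$, and at least one such index exists because $\sum_i \alpha_i = 1$. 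That vertex $v_i\in V$ is the desired minimizer.

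The main obstacle to guard against is the temptation to claim directly that $\inf_{v\in V}\langle v,c\rangle$ is attained over $V$: when $V$ is infinite (and possibly not closed) this need not hold, so the argument must route through the minimizer $y^*$ of the compact hull $D$ and exploit the \emph{finiteness} of its convex-combination representation to isolate a single attaining vertex. Note that no curvature, differentiability, or duality machinery is needed here --- only linearity of $\langle\,\cdot\,,c\rangle$ and compactness of $D$.
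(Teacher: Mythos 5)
Your proof is correct and takes essentially the same route as the paper's: both use compactness to obtain an optimizer in $D$, write it as a finite convex combination $\sum_i \alpha_i v_i$, and conclude by the averaging argument that some positively weighted vertex $v_i$ must itself attain the optimum (the paper treats the maximum, you the minimum --- an immaterial sign flip). Your opening infimum identity and the explicit remark that the infimum over an infinite $V$ need not be attained are careful additions, but the core argument is identical.
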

\begin{proof}
W.l.g. we will only show the case for the maximum. Let $s \in D$ be a point attaining the linear optimum $\langle s,c \rangle = \max_{y\in D} \langle y,c \rangle$. 
Then by definition of $D$, we have that $s = \sum_{i=1}^{k} \alpha_i v_i$, meaning that $s$ is the weighted average of some finite set of ``vertices'' $v_1,\dots,v_k \in V$, with $\alpha_i\ge0, \sum_i \alpha_i =1$. By linearity of the inner product,\vspace{-5pt}
\[
\langle s,c \rangle 
= \left\langle \sum_{i=1}^{k} \alpha_i v_i , c \right\rangle 
= \sum_{i=1}^{k} \alpha_i \langle v_i,c \rangle \ ,
\]
and therefore we must have that $\langle v_i,c \rangle \ge \langle s,c \rangle$ for at least one of the indices $i$, meaning that $v_i \in V$ is also attaining the linear maximum.
\end{proof}

In the following we will discuss several application where this simple fact will be useful to solve the linearized subproblems $\exactLin()$ more efficiently, as the set $V$ is often much easier to describe than the full compact domain $D$.

The setting of convex optimization over a convex hull in a vector space was already studied %
by~\cite{TongZhang:2003da}. There, each iteration of the optimizer consists of greedily selecting the point (or ``vertex'') of $V$ which promises best improvement. \cite{TongZhang:2003da} then gave a similar primal convergence guarantee as in our Theorem~\ref{thm:primalGreedy} (but no primal-dual convergence result on general convex hulls was known so far, to the best of our knowledge). The above Lemma~\ref{lem:convHull} in a sense explains the relation to our linearized internal problem. The main difference is that the algorithm of~\cite{TongZhang:2003da} always evaluates the original non-linear function $f$ at all vertices $V$, while our slightly more general framework only relies on the linear subproblem, and allows for arbitrary means to approximately solve the subproblem.
\subsection{Randomized Variants, and Stochastic Optimization}\label{subsec:SGD}
For a variety of classes of our convex optimization problems,  randomization can help to solve the linearized subproblem more efficiently.
This idea is strongly related to online and stochastic optimization, see e.g.~\cite{Nesterov:2011wm}, and also the popular stochastic gradient descent (SGD) techniques~\cite{Bottou:2010uz}.

We can also apply such cheaper randomized steps in our described framework, instead of deterministically solving the internal linear problem in each iteration.
Assumed that the user of our method is able to decompose the linearized problem in some arbitrary way using randomization, and if the randomization is such that the linearized problem will be solved ``accurately enough'' with some probability $p>0$ in each iteration, then our convergence analysis still holds also in this probabilistic setting as follows:

Formally, we assume that we are given access to a randomized procedure $\randomLin\left(c,D,\varepsilon'\right)$, which returns a point $s \in D$ such that $\langle s,c\rangle \le \displaystyle\min_{y\in D} \, \langle y,c\rangle + \varepsilon'$ with probability at least $p>0$. In other words, with a positive probability, $\randomLin()$ will behave like $\approxLin()$.
In each iteration of the line-search variant of our algorithm (see Algorithm~\ref{alg:greedyConvexLS}), we will now use that randomized internal procedure instead. The expected improvement given by a step towards $s=\randomLin()$ is at least $p$ times the amount given in Lemma~\ref{lem:step}. (Here we have used that in the events of ``failure'' of $\randomLin()$, the objective function value will at least not become worse, due to the use of line-search).

In other words if we perform $\frac1p$ times more iterations than required for the deterministic Algorithm~\ref{alg:greedyConvex}, then we have that the convergence by Theorem~\ref{thm:primalGreedy} also holds for the randomized variant described here. 

\paragraph{Stochastic Gradient Descent (SGD).}
A classical example is when the linearized problem is given by simply finding the maximum over say $n$ coordinates, as we will e.g. see in the following Sections~\ref{sec:vecSimplex} and~\ref{sec:vecL1} for optimizing over the simplex, or over bounded $\ell_1$-norm.
In this case, by sampling uniformly at random, with probability $\frac1n$ we will pick the correct coordinate, for which the step improvement is as in the deterministic Algorithm~\ref{alg:greedyConvex}. Therefore 
we have obtained the same convergence guarantee as for the deterministic algorithm, but the necessary number of steps is multiplied by $n$.

For unconstrained convex optimization, the convergence of SGD and other related methods was analyzed e.g. in~\cite{Nesterov:2011wm} and also the earlier paper~\cite[Section 6]{Nesterov:2007ee}. Also here, a comparable slow-down was observed when using the cheaper randomized steps.
\subsection{Relation to Classical Convex Optimization}\label{subsec:optClassic}

\paragraph{Relation to Gradient Descent and Steepest Descent.}

The internal linear optimizer in our Algorithm~\ref{alg:greedyConvex} can also be interpreted in terms of descent directions. Recall that all vectors $y$ that have negative scalar product with the current gradient, i.e. $\langle y, \nabla f(x)\rangle < 0$, are called  \emph{descent directions}, see e.g.~\cite[Chapter 9.4]{Boyd:2004uz}. Also observe that $\langle y, \nabla f(x)\rangle$ is the directional derivative of $f$ in direction of $y$ if $y$ is of unit length. Our method therefore chooses the best descent direction over the entire domain $D$, where the quality is measured as the best possible absolute improvement as suggested by the linearization at the current point $x$. In any iteration, this will crucially depend on the \emph{global} shape of the domain $D$, even if the actual step-size $\alpha^{(k)}$ might be very small.

This crucially contrasts classical gradient descent techniques, which only use \emph{local} information to determine the step-directions, facing the risk of walking out of the domain $D$ and therefore requiring projection steps after each iteration.

\paragraph{Relation to Inaccurate and Missing Gradient Information.}
The ability of our Algorithm~\ref{alg:greedyConvex} to deal with only approximate internal linear optimizers as in $\approxLin()$ is also related to existing methods that assume that gradient information is only available with noise, or in a stochastic or sampling setting. 

For the case of optimizing smooth convex functions,~\cite{dAspremont:2008gl} has used a similar measure of error, namely that the linearization given by the ``noisy'' version $\tilde d_x$ of the gradient $\nabla f(x)$ does not differ by more than say $\varepsilon'$ when measured over the entire domain $D$, or formally 
\[
\abs{\langle y-z, \tilde d_x\rangle  - \langle y-z, \nabla f(x)\rangle} \le \varepsilon' \ , \ ~~\forall x,y,z\in D \ .
\]
This assumption is similar, but stronger than the additive approximation quality that we require in our above setting (we only need that the linearized \emph{optimal values} are within $\varepsilon'$). Also, the algorithm as well as the analysis in~\cite{dAspremont:2008gl} are more complicated than the method proposed here, due to the need of projections and proximal operators.

We have discussed the case where gradient information is available only in a stochastic oracle (e.g. such that the gradient is obtained in expectation) in the above Subsection~\ref{subsec:SGD}. For an overview of related randomized methods in unconstrained convex optimization, we refer the reader to the recent work by~\cite{Nesterov:2011wm}, which also applies when the gradient itself is not available and has to be estimated by oracle calls to the function alone.

If gradient information can be constructed in any way such that the linearized problem $\approxLin()$ can be solved to the desired additive error, then our above analysis of Algorithm~\ref{alg:greedyConvex} will still hold.

\paragraph{Relation to Mirror Descent, Proximal Methods and Conjugate Functions.}
Our proposed method is related to \emph{mirror descent} as well as \emph{proximal methods} in convex optimization, but our approach is usually simpler. 
The mirror descent technique originates from e.g.~\cite{Beck:2003dv,BenTal:2005wn}. For a brief overview of proximal methods with applications to some of the classes of sparse convex optimization problems as studied here, we refer to~\cite[Section 3]{Bach:2011uc}.

To investigate the connection, we write $f_{\mbox{\tiny lin}|x}(y) := f(x)+ \langle y-x , d_x \rangle$ for the linearization given by the (sub)gradient $d_x = \nabla f(x)$ at a fixed point $x\in D$.
A variant of mirror descent, see e.g.~\cite{BenTal:2005wn,Hazan:2011wf} is to find the next iterate $y$ as the point maximizing the Bregman divergence 
\begin{equation}\label{eq:bregman}
f(y)-f(x)- \langle y-x, d_x \rangle = f(y) - f_{\mbox{\tiny lin}|x}(y)
\end{equation}
relative to the currently fixed old iterate $x$.  This is the same task as maximizing the gap between the function $f(y)$ and its linear approximation at $x$, or equivalently we evaluate the \emph{conjugate function} $f^*(z) := \displaystyle\sup_{y\in D} \langle y,z\rangle - f(y)$ at $z=d_x$. The definition of the conjugate dual is also known as \emph{Fenchel duality}, see e.g.~\cite{Borwein:2006ts}. In~\cite{Nemirovski:2005wu}, the conjugate function is also called the Legendre transformation.

However in our approach, the inner task $\exactLin(d_x,D)$ as well as $\approxLin(d_x,D,\varepsilon')$ is a simpler linear problem. Namely, we directly minimize the linearization at the current point $x$, i.e. we maximize
\begin{equation}\label{eq:maxLinAtX}
-f(x)- \langle y-x, d_x\rangle = - f_{\mbox{\tiny lin}|x}(y)
\end{equation}
and then move towards an approximate maximizer $y$. In terms of Fenchel duality, this simpler linear problem is the evaluation of the conjugate dual of the \emph{characteristic function} of our domain $D$, i.e. 
\[
\ind_D^*(z) := \displaystyle\sup_{y\in \X} \langle y,z\rangle - \ind_D(y)\ ,
\]
where this function is evaluated at the current subgradient $z=d_x$. The \emph{characteristic function} $\ind_D:\X\rightarrow\overline\R$ of a set $D\subseteq\X$ is defined as $\ind_D(y)=0$ for $y\in D$ and $\ind_D(y)=\infty$ otherwise.
Compared to our algorithm, mirror descent schemes require a ``projection'' step in each iteration, sometimes also called the \emph{proximal} or \emph{mirror operator}. This refers to minimizing the linearization plus a strongly convex prox-function that punishes the distance to the starting point. If the squared Euclidean norm is used, the mirror operator corresponds to the standard projection back onto the domain $D$.
Our method uses no such prox-function, and neither is the zero-function a strongly convex one, as would be required for mirror descent to work. It is expected that the computational cost per iteration of our method will in most application cases be lower compared to mirror descent schemes. 

For convex optimization over the simplex, which we will study in more details in the following Section~\ref{sec:vecSimplex},~\cite{Beck:2003dv} have proposed a mirror descent algorithm, obtaining a convergence of $f(x^{(k)})-f(x^*) \le \sqrt{2\ln n} \frac{L}{\sqrt{k}}$. This however is worse than the convergence of our methods as given by Theorem~\ref{thm:primalGreedy}. Our convergence is independent of the dimension $n$, and goes with $\frac1k$ instead of $\frac1{\sqrt{k}}$.
Also the earlier paper by~\cite{BenTal:2001kd} only obtained a convergence of $O\big(\frac{1}{\sqrt{k}}\big)$ for the case of Lipschitz-continuous convex functions over the simplex.

The NERML optimizer by~\cite{BenTal:2005wn} is a variant of mirror descent that memorizes several past linearizations of the objective function, to improve the available knowledge about the current function landscape. It is an open research question if this idea could also help in our setting here, or for stochastic gradient descent schemes~\cite{Bottou:2010uz}.

\section{Sparse Approximation over the Simplex}\label{sec:vecSimplex}

As a first application of the above general scheme, we will now consider optimization problems defined over the unit simplex, or in other words the non-negative vectors of $\ell_1$-norm equal to one.
This will serve as a warm-up case before considering $\ell_1$-norm regularized problems in the next Section~\ref{sec:vecL1}.

Our approach here will allow us to understand the best achievable sparsity of approximate solutions, as a function of the approximation quality, as already shown by~\cite{Clarkson:2010hv}. 

In particular, we will show that our main Algorithm~\vref{alg:greedyConvex} and its analysis do lead to Clarkson's approach~\cite{Clarkson:2010hv} for optimizing over the simplex. In this case, it was already known that sparsity $O\big(\frac1\varepsilon\big)$ can always be achieved by applying Algorithm~\ref{alg:greedyConvex} to the simplex domain, see~\cite{Clarkson:2010hv}. We will also show that this is indeed optimal, by providing an asymptotically matching lower bound in Section~\ref{subsec:vecLowerBound}. Also, our analysis holds even if the linear subproblems are only solved approximately, and allows arbitrary starting points, in contrast to~\cite{Clarkson:2010hv}.

Having this efficient algorithm giving sparsity $O\big(\frac1\varepsilon\big)$ is in particularly attractive in view of the computational complexity of \emph{vector cardinality minimization}, which is known to be NP-hard, by a reduction to Exact-Cover, see~\cite{Natarajan:1995ww}. Vector cardinality minimization here refers to finding the sparsest vector that is an $\varepsilon$-approximation to some given convex minimization problem. Formally, finding the sparsest $x$ that satisfies $\norm{Ax-b}_2 \le \varepsilon$ for given $A,b$ and $\varepsilon$.

\paragraph{Set-Up.}
We suppose that a basis has been chosen in the space $\X$, so that we can assume $\X = \R^n$ with the standard inner product $\langle x,y\rangle = x^Ty$.
Here we consider one special class of the general optimization problems~(\ref{eq:optGenConvex}), namely we optimize over non-negative vectors that sum up to one, that is
\begin{equation}\label{eq:optVecSimplex}
\begin{array}{rl}
   \displaystyle\mini_{x \in \R^n} & f(x)  \\
    s.t. &  \norm{x}_1 = 1 \ , \\
         & x \ge 0 \ .
\end{array}
\end{equation}

In the following we write $\Delta_n := \SetOf{x \in \R^n}{x\ge 0,\ \norm{x}_1 = 1}$ for the unit simplex in $\R^n$.
As the objective function $f$ is now defined over $\R^n$, all subgradients or gradients of $f$ will also be represented by vectors in $\R^n$ in the following.

Note that the alternative case of optimizing under an inequality constraint $\norm{x}_1 \le 1$ instead of $\norm{x}_1 = 1$ can easily be reduced to the above form~(\ref{eq:optVecSimplex}) by introducing a new ``slack'' variable. Formally, one uses vectors $\hat x =(x_1,\dots,x_n,x_{n+1}) \in \R^{n+1}$ instead and optimizes the function $\hat f(\hat x) := f(x_1,\dots,x_n)$ over the simplex domain $\norm{\hat x}_1 = 1$, $\hat x \ge 0$.

\paragraph{Coresets.}
The coreset concept was originally introduced in computational geometry by~\cite{Badoiu:2002ug} and~\cite{Agarwal:2002fk}. For point-set problems, the coreset idea refers to identifying a very small subset (coreset) of the points, such that the solution just on the coreset is guaranteed to be a good approximation to original problem. Here for general convex optimization, the role of the coreset points is taken by the non-zero coordinates of our sought vector $x$ instead. The coreset size then corresponds to the sparsity of $x$.

Formally if there exists an $\varepsilon$-approximate solution $x \in D \subseteq \R^n$ to the convex optimization problem~(\ref{eq:optGenConvex}), using only $k$ many non-zero coordinates, then we say that the corresponding coordinates of $x$ form an \emph{$\varepsilon$-coreset} of size $k$ for problem~(\ref{eq:optGenConvex}).

In other words, the following upper and lower bounds of $O\big(\frac1\varepsilon\big)$ on the sparsity of approximations for problem~(\ref{eq:optVecSimplex}) are indeed matching upper and lower bounds on the coreset size for convex optimization over the simplex, analogous to what we have found in the geometric problem setting in~\cite{GartnerJaggi:2009}.

\subsection{Upper Bound: Sparse Greedy on the Simplex}\label{subsec:algSimplex}
Here we will show how the general algorithm and its analysis from the previous Section~\ref{sec:optGenConvex} do in particular lead to Clarkson's approach~\cite{Clarkson:2010hv} for minimizing any convex function over the unit simplex. The algorithm follows directly from Algorithm~\ref{alg:greedyConvex}, and will have a running time of $O\left(\frac1\varepsilon\right)$ many gradient evaluations. We will crucially make use of the fact that every linear function attains its minimum at a vertex of the simplex $\Delta_n$. Formally, for any vector $c \in\R^n$, it holds that $\displaystyle\min_{s\in\Delta_n} s^T c = \min_i c_i \,$. This property is easy to verify in the special case here, but is also a direct consequence of the small Lemma~\ref{lem:convHull} which we have proven for general convex hulls, if we accept that the unit simplex is the convex hull of the unit basis vectors.
We have obtained that the internal linearized primitive can be solved exactly by choosing
\[
\exactLin\left(c,\Delta_n \right) := \unit_i ~~\text{ with }i = \argmin_i \, c_i  \ .
\]

\begin{algorithm}[h!]
  \caption{Sparse Greedy on the Simplex}
  \label{alg:greedySimplex}
\begin{algorithmic}
  \STATE {\bfseries Input:} Convex function $f$, target accuracy $\varepsilon$
  \STATE {\bfseries Output:} $\varepsilon$-approximate solution for problem~(\ref{eq:optVecSimplex})
  \STATE Set $x^{(0)}:= \unit_1$
  \FOR{$k=0\dots\infty$}
  \STATE Compute $i := \argmin_i \left(\nabla f(x^{(k)})\right)_i$
  \STATE Let $\alpha := \frac2{k+2}$
  \STATE Update $x^{(k+1)}:=x^{(k)}+\alpha(\unit_i - x^{(k)})$
  \ENDFOR
\end{algorithmic}
\end{algorithm}

Observe that in each iteration, this algorithm only introduces at most one new non-zero coordinate, so that the sparsity of $x^{(k)}$ is always upper bounded by the number of steps $k$, plus one, given that we start at a vertex. 
Since Algorithm~\ref{alg:greedySimplex} only moves in coordinate directions, it can be seen as a variant of \emph{coordinate descent}.
The convergence result directly follows from the general analysis we gave in the previous Section~\ref{sec:optGenConvex}. 

\begin{theorem}[{\cite[Theorem 2.3]{Clarkson:2010hv}, Convergence of Sparse Greedy on the Simplex}]\label{thm:sparseSimplex}
For each $k\ge1$, the iterate $x^{(k)}$ of Algorithm~\ref{alg:greedySimplex} satisfies
\[
f(x^{(k)}) - f(x^*) \le \frac{4C_f}{k+2} \ .
\]
where $x^*\in \Delta_n$ is an optimal solution to problem~(\ref{eq:optVecSimplex}).

Furthermore, for any $\varepsilon>0$, after at most $2\left\lceil\frac{4C_f}{\varepsilon}\right\rceil +1 = O\left(\frac1\varepsilon\right)$ many steps\footnote{%
Note that in order for our Theorem~\ref{thm:primalDualGreedy} on the bounded duality gap to apply, the step-size in the second half of the iterations needs to be fixed to $\alpha^{(k)} := \frac2{K+2}$, see Section~\ref{subsec:smallGap}. This remark also applies to the later applications of our general Algorithm~\ref{alg:greedyConvex} that we discuss in the following. We already mentioned above that if line-search is used instead, then no such technicality is necessary, see also~\cite{Clarkson:2010hv}. 
}%
, it has an iterate $x^{(k)}$ of sparsity $O\left(\frac1\varepsilon\right)$, satisfying $g(x^{(k)}) \le\varepsilon$.
\end{theorem}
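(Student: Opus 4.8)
The plan is to derive this theorem as essentially a direct corollary of the two general convergence results already established for Algorithm~\ref{alg:greedyConvex}, namely Theorem~\ref{thm:primalGreedy} (primal convergence) and Theorem~\ref{thm:primalDualGreedy} (primal-dual convergence), together with the simple structural observation about the linear minimizer on the simplex. First I would note that Algorithm~\ref{alg:greedySimplex} is precisely the exact variant of Algorithm~\ref{alg:greedyConvex} specialized to the domain $D = \Delta_n$, where the internal primitive $\exactLin(c,\Delta_n)$ returns $\unit_i$ with $i = \argmin_i c_i$. The validity of this specialization rests on the fact, verified just before the theorem (and following from Lemma~\ref{lem:convHull} applied to $\Delta_n = \conv\{\unit_1,\dots,\unit_n\}$), that every linear function attains its minimum over the simplex at a vertex, so $\min_{s\in\Delta_n} s^T c = \min_i c_i$. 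Since the algorithms literally coincide step for step, the iterates $x^{(k)}$ are identical, and the first claimed bound
\[
f(x^{(k)}) - f(x^*) \le \frac{4C_f}{k+2}
\]
follows immediately by invoking Theorem~\ref{thm:primalGreedy} for the exact variant.

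For the second claim, about sparsity and the duality gap, I would proceed in two independent sub-steps. For the duality gap, I would apply Theorem~\ref{thm:primalDualGreedy} with $K := \lceil \frac{4C_f}{\varepsilon}\rceil$: running the exact variant for $K$ iterations and then a further $K+1$ iterations with the fixed step-size $\alpha^{(k)} := \frac{2}{K+2}$ guarantees the existence of an iterate $x^{(\hat k)}$ with $K \le \hat k \le 2K+1$ satisfying $g(x^{(\hat k)}) \le \varepsilon$. The total number of steps is then $2K+1 = 2\lceil\frac{4C_f}{\varepsilon}\rceil + 1 = O(\frac1\varepsilon)$, matching the stated bound (with the footnote clarifying the fixed step-size technicality needed for Theorem~\ref{thm:primalDualGreedy} to apply). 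For the sparsity, I would use the observation made just before the theorem statement: starting from the vertex $x^{(0)} = \unit_1$, each update $x^{(k+1)} := x^{(k)} + \alpha(\unit_i - x^{(k)})$ introduces at most one new nonzero coordinate (the $i$-th), so by a trivial induction the number of nonzero entries of $x^{(k)}$ is at most $k+1$. Hence after $O(\frac1\varepsilon)$ iterations the returned iterate has sparsity $O(\frac1\varepsilon)$ as well.

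I do not anticipate a genuine mathematical obstacle here, since the heavy lifting has already been done in the general convergence theorems. The only point requiring a little care is the bookkeeping that makes the sparsity bound and the duality-gap bound compatible: the duality-gap guarantee picks out some iterate $\hat k$ in the window $[K, 2K+1]$, and I must confirm that this same iterate still has sparsity $O(\frac1\varepsilon)$. This holds because $\hat k \le 2K+1 = O(\frac1\varepsilon)$, so the sparsity bound $\hat k + 1$ is itself $O(\frac1\varepsilon)$ — the two bounds are governed by the same iteration count. The mildly subtle aspect, which I would flag via the footnote exactly as the paper does, is that the second phase of the run uses a \emph{fixed} step-size $\frac{2}{K+2}$ rather than the diminishing $\frac{2}{k+2}$; this is precisely the hypothesis under which Theorem~\ref{thm:primalDualGreedy} was proven, and one must be sure the fixed-step phase does not destroy the at-most-one-new-coordinate-per-step property, which it does not, since the update still moves toward a single vertex $\unit_i$.
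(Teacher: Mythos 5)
Your proposal is correct and matches the paper's own proof, which simply cites Theorem~\ref{thm:primalGreedy} and Theorem~\ref{thm:primalDualGreedy} as a corollary, with the sparsity bound resting on the same observation (made in the text preceding the theorem) that each update introduces at most one new non-zero coordinate. Your additional bookkeeping --- checking that the iterate $x^{(\hat k)}$ selected by the duality-gap argument also satisfies the sparsity bound, and that the fixed-step phase preserves the one-new-coordinate property --- is a welcome explicit verification of details the paper leaves implicit.
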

\begin{proof}
This is a corollary of Theorem~\ref{thm:primalGreedy} and Theorem~\ref{thm:primalDualGreedy}.
\end{proof}

\paragraph{Duality Gap.}
We recall from Section~\ref{sec:poorDual} that the duality gap~(\ref{eq:defGap}) at any point $x\in\Delta_n$ is easily computable from any subgradient, and in our case becomes
\begin{equation}\label{eq:gapSimplex}
\begin{split}
g(x,d_x) &= x^T d_x  - \min_i \, (d_x)_i \ ,\ \ \ \ \text{ and}\\
g(x) &= x^T \nabla f(x) - \min_i (\nabla f(x))_i \ .
\end{split}
\end{equation}
Here we have again used the observation that linear functions attain their minimum at a vertex of the domain, i.e, $\displaystyle\min_{s\in\Delta_n} s^T c = \min_i c_i $.

\paragraph{Applications.}
Many practically relevant optimization problems do fit into our setting~(\ref{eq:optVecSimplex}) here, allowing the application of Algorithm~\ref{alg:greedySimplex}. This includes linear classifiers such as support vector machines (SVMs), see also~\cite{GartnerJaggi:2009}, as well as kernel learning (finding the best convex combination among a set of base kernels)~\cite{Bach:2004td}.
Some other applications that directly fit into our framework are $\ell_2$-support vector regression (SVR), AdaBoost~\cite{TongZhang:2003da}, mean-variance analysis in portfolio selection~\cite{Markowitz:1952tg}, the smallest enclosing ball problem~\cite{Badoiu:2007bj}, and estimating mixtures of probability densities~\cite{Clarkson:2010hv}. For more applications we refer to~\cite{Clarkson:2010hv}.

\paragraph{Line-Search for the Best Step-Size.}
In most applications it will be a straight-forward task to find the optimal step-size $\alpha \in [0,1]$ in each step instead, as described in Section~\ref{subsec:lineSearch}.

For the special case of polytope distance and SVM problems, the resulting method then exactly corresponds to Gilbert's geometric algorithm~\cite{gilbert66}, as shown in~\cite{GartnerJaggi:2009}. Here the wording of ``line-search" makes geometric sense in that we need to find the point $s$ on a given line, such that $s$ is closest to the origin.

\paragraph{Away Steps.}
By performing more work with the currently non-zero coordinates, one can get the sparsity even smaller. More precisely the number of non-zeros can be improved close to $\frac{2C_f}{\varepsilon}$ instead of $2\left\lceil\frac{4C_f}{\varepsilon}\right\rceil$ as given by the above Theorem~\ref{thm:sparseSimplex}.
The idea of \emph{away-steps} introduced by~\cite{Todd:2007jl} is to keep the total number of non-zero coordinates (i.e. the coreset size) fixed over all iterations, by removing the smallest non-zero coordinate from $x$ after each adding step.
For more background we refer to~\cite[Algorithm 9.1]{Clarkson:2010hv}.
\subsection{$\Omega(\frac1\varepsilon)$ Lower Bound on the Sparsity}\label{subsec:vecLowerBound}

We will now show that sparsity $O\big(\frac1\varepsilon\big)$, as obtained by the greedy algorithm we analyzed in the previous section is indeed best possible, by providing a lower bound of $\Omega\big(\frac1\varepsilon\big)$. In the language of coresets, this means we will provide a matching lower bound on the size of coresets for convex optimization over the simplex. 
Together with the upper bound, this therefore completely characterizes the trade-off between sparsity and approximation quality for the family of optimization problems of the form~(\ref{eq:optVecSimplex}). The same matching sparsity upper and lower bounds will also hold for optimizing over the $\ell_1$-ball instead, see Section~\ref{sec:vecL1}. 

For the following lower bound construction we consider the differentiable function $f(x) := \norm{x}_2^2 = x^Tx$. This function has gradient $\nabla f(x) = 2x$. Its curvature constant is $C_f = 2$, which follows directly from the definition~(\ref{eq:Cf}), and the fact that here $f(y)-f(x)-(y-x)^T\nabla f(x) = y^Ty-x^Tx-(y-x)^T2x = \norm{x-y}_2^2$, so that
$C_f = \sup_{x,s\in \Delta_n}  \textstyle  \norm{x-s}_2^2 = \diam(\Delta_n)^2 = 2$. 

The following lemmata show that the sparse greedy algorithm of~\cite{Clarkson:2010hv} from Section~\ref{subsec:algSimplex} is indeed optimal for the approximation quality (primal as well as dual error respectively), giving best possible sparsity, up to a small multiplicative constant.

\begin{lemma}\label{lem:vecPrimalLB}
For $f(x) := \norm{x}_2^2$, and $1 \leq k \leq n$, it holds that
$$\min_{\substack{x \in \Delta_n \\
                            \card(x)\le k}}  f(x) = \frac{1}{k}.$$
\end{lemma}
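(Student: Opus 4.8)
The plan is to establish the claimed value $\frac1k$ by sandwiching the minimum between a matching upper and lower bound. For the upper bound I would simply exhibit an explicit feasible point: take $x$ with $x_i = \frac1k$ for $i=1,\dots,k$ and $x_i = 0$ otherwise. This clearly lies in $\Delta_n$, has $\card(x)=k$, and satisfies $f(x)=\sum_{i=1}^k \frac1{k^2} = \frac1k$, so the minimum is at most $\frac1k$. The real content is the lower bound, showing that no feasible vector can do better.

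For the lower bound, I would take an arbitrary $x\in\Delta_n$ with $\card(x)=m\le k$ and apply the Cauchy--Schwarz inequality on its support $S=\SetOf{i}{x_i\ne 0}$. Since $\sum_{i\in S} x_i = \norm{x}_1 = 1$, writing each $x_i$ as $x_i\cdot 1$ and using Cauchy--Schwarz gives
\[
1 = \Big(\sum_{i\in S} x_i\Big)^2 \le \Big(\sum_{i\in S} x_i^2\Big)\Big(\sum_{i\in S} 1\Big) = m\, \norm{x}_2^2 \ ,
\]
so that $f(x)=\norm{x}_2^2 \ge \frac1m \ge \frac1k$, the last step using $m\le k$. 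Combined with the upper bound, this yields $\min_{x\in\Delta_n,\,\card(x)\le k} f(x) = \frac1k$.

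There is no serious obstacle here; the only points requiring a little care are bookkeeping rather than mathematical depth. First, the constraint is $\card(x)\le k$ rather than $=k$, so I must allow $m<k$ in the lower bound argument; the inequality $\frac1m\ge\frac1k$ handles this and simultaneously shows that spreading the mass over fewer than $k$ coordinates is strictly worse, which is why the optimum uses all $k$ coordinates. Second, to confirm that $\frac1k$ is genuinely attained (so the claim is an equality and not merely an infimum) I would note that equality in Cauchy--Schwarz forces all $x_i$ on the support to be equal, which for $m=k$ recovers exactly the uniform vector used in the upper bound. This makes the two bounds meet and completes the proof.
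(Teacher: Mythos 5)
Your proof is correct, but it takes a genuinely different route from the paper. The paper proves the lower bound by induction on $k$: it peels off one non-zero coordinate, writing $x = (1-\alpha)v + \alpha\unit_i$ with $v\in\Delta_n$ of sparsity at most $k-1$ and $v_i=0$, applies the induction hypothesis $v^Tv \ge \frac1{k-1}$, and then minimizes $(1-\beta)^2\frac1{k-1}+\beta^2$ over $\beta\in[0,1]$ to arrive at $\frac1k$. Your Cauchy--Schwarz argument, $1 = \bigl(\sum_{i\in S} x_i\bigr)^2 \le \card(x)\,\norm{x}_2^2$, replaces this entire inductive machinery with a single inequality, and additionally gives you the equality characterization (uniformity on the support) for free, which the paper only addresses by exhibiting the uniform point separately. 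What the paper's induction buys in exchange is structural insight that is reused later: the same peeling-off decomposition pattern mirrors how the greedy algorithm itself builds iterates as convex combinations of vertices, and the matrix analogue (Lemma~\ref{lem:matPrimalLB}) is reduced to the vector case by diagonalization, where your argument would work equally well after that reduction. Your handling of $\card(x)=m\le k$ via $\frac1m\ge\frac1k$, and your verification that the infimum is attained, are both sound; there is no gap.
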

\begin{proof}
We prove the inequality $\displaystyle\min_{x..} f(x) \ge \textstyle\frac1k$ by induction on $k$.

\fbox{Case $k=1$} For any unit length vector $x\in \Delta_n$ having just a single non-zero entry, $f(x) = \norm{x}_2 = \norm{x}_1 = 1$.

\fbox{Case $k>1$} For every $x\in \Delta_n$ of sparsity $\card(x) \le k$, we can pick a coordinate $i$ with $x_i \ne 0$, and write $x = (1-\alpha) v + \alpha \unit_i$ as the sum of two orthogonal vectors $v$ and a unit basis vector $\unit_i$, where $v \in \Delta_n$ of sparsity $\le k-1$, $v_i = 0$,
and $\alpha = x_i$. So for every $x\in \Delta_n$ of sparsity $\le k$, we therefore get that 
\[
\begin{array}{rcl}
f(x) = \norm{x}_2^2 &=& x^Tx\\
  &=& ((1-\alpha) v + \alpha \unit_i)^T ((1-\alpha) v + \alpha \unit_i)\\
  &=& (1-\alpha)^2 v^Tv + \alpha^2\\
  &\ge& (1-\alpha)^2 \frac1{k-1} + \alpha^2\\
  &\ge& \min_{0\leq \beta\leq 1} (1-\beta)^2 \frac{1}{k-1} + \beta^2 \\
  &=& \frac{1}{k}.
\end{array}
\]
In the first inequality we have applied the induction hypothesis for $v\in \Delta_n$ of sparsity $\le k-1$.

Equality: The function value $f(x) = \frac1k$ is obtained by setting $k$ of the coordinates of $x$ to $\frac1k$ each.
\end{proof}

In other words for any vector $x$ of sparsity $\card(x)=k$, the primal error $f(x) - f(x^*)$ is always lower bounded by $\frac1k - \frac1n$. For the duality gap $g(x)$, the lower bound is even slightly higher:

\begin{lemma}
For $f(x) := \norm{x}_2^2$, and any $k \in \N$, $k < n$, it holds that
$$g(x) \ge \frac2k \ \ \ \ \ \ \ \ \  \forall x \in \Delta_n \text{ s.t. } \card(x)\le k.$$
\end{lemma}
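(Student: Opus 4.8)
The plan is to reduce everything to the explicit duality gap formula already derived for the simplex and then invoke the primal lower bound from the previous lemma. Recall from~(\ref{eq:gapSimplex}) that for the differentiable objective $f$ over $\Delta_n$ we have the clean expression
\[
g(x) = x^T\nabla f(x) - \min_i \, (\nabla f(x))_i \ .
\]
Since $f(x) = \norm{x}_2^2$ has gradient $\nabla f(x) = 2x$, substituting directly gives
\[
g(x) = 2\,x^Tx - 2\min_i x_i = 2\norm{x}_2^2 - 2\min_i x_i \ .
\]
So the first step is simply to write out this gap for the specific $f$ under consideration.

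The second, and really the decisive, step is to observe that the $\min_i x_i$ term vanishes. Here is where the hypothesis $k<n$ enters crucially: any $x\in\Delta_n$ with $\card(x)\le k$ has at most $k<n$ nonzero coordinates, hence at least one coordinate equal to zero. Because we are on the simplex we also have $x\ge 0$, so the smallest coordinate is exactly $\min_i x_i = 0$. This collapses the gap to $g(x) = 2\norm{x}_2^2$.

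The final step is to lower bound $\norm{x}_2^2$ using the preceding Lemma~\ref{lem:vecPrimalLB}, which states precisely that $\min_{x\in\Delta_n,\,\card(x)\le k} \norm{x}_2^2 = \frac1k$. Combining this with $g(x) = 2\norm{x}_2^2$ yields $g(x)\ge \frac2k$ for every admissible $x$, as claimed. I do not expect any genuine obstacle in this argument: the entire content is the observation that the sparsity strictly below the ambient dimension forces a zero coordinate, which removes the linear-minimum correction and reduces the dual bound to twice the already-established primal bound. The only point deserving a word of care is making explicit why $k<n$ (rather than $k\le n$) is needed — without a guaranteed zero coordinate the term $\min_i x_i$ could be positive and the clean factor-of-two relationship between the gap and $\norm{x}_2^2$ would fail.
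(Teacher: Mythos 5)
Your proposal is correct and follows exactly the paper's own argument: substitute $\nabla f(x)=2x$ into the simplex duality gap formula~(\ref{eq:gapSimplex}), use $\min_i x_i = 0$ (forced by $\card(x)\le k<n$ and nonnegativity) to reduce the gap to $2\,x^Tx$, and then apply Lemma~\ref{lem:vecPrimalLB}. Your additional remark on why $k<n$ is essential is a fair elaboration of a step the paper states tersely.
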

\begin{proof}
$g(x) = x^T \nabla f(x) - \min_i (\nabla f(x))_i = 2(x^Tx - \min_i x_i)$. We now use $\min_i x_i = 0$ because $\card(x) < n$, and that by Lemma~\ref{lem:vecPrimalLB} we have $x^Tx = f(x) \ge \frac1k$.
\end{proof}

\paragraph{\textbf{Note:}} We could also consider the function $f(x) := \gamma \norm{x}_2^2$ instead, for some $\gamma > 0$. This $f$ has \emph{curvature} constant $C_f = 2\gamma$, and for this scaling, our above lower bound on the duality gap will also scale linearly, giving~$\frac{C_f}{k}$.

\section{Sparse Approximation with Bounded $\ell_1$-Norm}\label{sec:vecL1}

In this second application case, will apply the general greedy approach from Section~\ref{sec:optGenConvex} in order to understand the best achievable sparsity for convex optimization under bounded $\ell_1$-norm, as a function of the approximation quality. Here the situation is indeed extremely similar to the above Section~\ref{sec:vecSimplex} of optimizing over the simplex, and the resulting algorithm will again have a running time of $O\left(\frac1\varepsilon\right)$ many gradient evaluations.

It is known that the vector $\norm{.}_1$-norm is the best convex approximation to the sparsity (cardinality) of a vector, that is $\card(.)$. More precisely, the function $\norm{.}_1$ is the convex envelope of the sparsity, meaning that it is the ``largest'' convex function that is upper bounded by the sparsity on the convex domain of vectors $\SetOf{x}{\norm{x}_\infty \le 1}$. This can be seen by observing that $\card(x) \ge \frac{\norm{x}_1}{\norm{x}_\infty}$, %
see e.g.~\cite{Recht:2010tf}.
We will discuss the analogous generalization to matrices in the second part of this article, see Section~\ref{chap:optNucMax}, namely using the matrix nuclear norm as the ``best'' convex approximation of the matrix rank.

\paragraph{Set-Up.}
Here we consider one special class of the general optimization problem~(\ref{eq:optGenConvex}), namely problems over vectors in $\R^n$ with bounded $\norm{.}_1$-norm, that is
\begin{equation}\label{eq:optVecL1}
\begin{array}{rl}
   \displaystyle\mini_{x \in \R^n} & f(x)  \\
    s.t. &  \norm{x}_1 \le 1 \ .
\end{array}
\end{equation}

We write $\LOneBall_n := \SetOf{x \in \R^n}{\norm{x}_1 \le 1}$ for the $\ell_1$-ball in $\R^n$. Note that one can simply rescale the function argument to allow for more general constraints $\norm{x}_1 \le t$ for $t>0$.
Again we have $\X = \R^n$ with the standard inner product $\langle x,y\rangle = x^Ty$, so that also the subgradients or gradients of $f$ are represented as vectors in $\R^n$.

\paragraph{The Linearized Problem.}
As already in the simplex case, the subproblem of optimizing a linear function over the $\ell_1$-ball is particularly easy to solve, allowing us to provide a fast implementation of the internal primitive procedure $\exactLin\left(c,\LOneBall_n \right)$. 

Namely, it is again easy to see that every linear function attains its minimum/maximum at a vertex of the ball $\LOneBall_n$, as we have already seen for general convex hulls in our earlier Lemma~\ref{lem:convHull}, and here $\LOneBall_n = \conv(\SetOf{\pm \unit_i}{i\in[n]})$. %
Here this crucial observation can also alternatively be interpreted as the known fact that the dual norm to the $\ell_1$-norm is in fact the $\ell_\infty$-norm, see also our earlier Observation~\ref{obs:gapDualNorm}.

\begin{observation}\label{obs:L1Vertex}
For any vector $c \in\R^n$, it holds that
\[
\unit_i \cdot \sign(c_i) \in \argmax_{y\in\LOneBall_n} \, y^Tc
\]
where $i \in [n]$ is an index of a maximal coordinate of $c$ measured in absolute value, or formally $i \in \argmax_j \,\abs{c_j}$.
\end{observation}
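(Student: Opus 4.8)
The plan is to show that $\unit_i \cdot \sign(c_i)$, for $i$ maximizing $\abs{c_j}$, achieves the maximum of the linear function $y \mapsto y^Tc$ over the $\ell_1$-ball $\LOneBall_n$. First I would invoke Lemma~\ref{lem:convHull}: since $\LOneBall_n = \conv\left(\SetOf{\pm\unit_j}{j\in[n]}\right)$ is the convex hull of the signed unit vectors, the linear function $y\mapsto y^Tc$ attains its maximum over $\LOneBall_n$ at one of these $2n$ vertices. Hence it suffices to compare the values $(\pm\unit_j)^Tc = \pm c_j$ over all $j\in[n]$ and both sign choices, and identify which one is largest.

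The key observation is then purely a one-line computation: $\max_{j,\,\pm} \, (\pm c_j) = \max_j \abs{c_j}$, and this common value equals $\abs{c_i}$ precisely when $i\in\argmax_j\abs{c_j}$. The vertex realizing this value is $\unit_i$ if $c_i>0$ and $-\unit_i$ if $c_i<0$, which is compactly written as $\unit_i\cdot\sign(c_i)$, since $\left(\unit_i\cdot\sign(c_i)\right)^Tc = \sign(c_i)\cdot c_i = \abs{c_i} = \max_j\abs{c_j}$. This confirms the claimed point lies in $\argmax_{y\in\LOneBall_n} y^Tc$.

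Alternatively, and perhaps more cleanly, I would give a direct argument without explicitly listing vertices: for any $y\in\LOneBall_n$, Hölder's inequality (the duality of $\ell_1$ and $\ell_\infty$) gives $y^Tc \le \norm{y}_1 \norm{c}_\infty \le \norm{c}_\infty = \max_j\abs{c_j}$, using $\norm{y}_1\le 1$. Since the candidate point $\unit_i\cdot\sign(c_i)$ lies in $\LOneBall_n$ (it has unit $\ell_1$-norm) and attains exactly $\abs{c_i}=\max_j\abs{c_j}$ as computed above, it meets the upper bound and is therefore a maximizer. This also makes transparent the connection to Observation~\ref{obs:gapDualNorm}, where the dual norm to $\norm{.}_1$ is $\norm{.}_\infty$.

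I do not expect any real obstacle here; the statement is essentially a restatement of the $\ell_1$/$\ell_\infty$ duality combined with the general vertex principle of Lemma~\ref{lem:convHull}. The only point requiring a small amount of care is the edge case $c_i=0$ (when $c$ is the zero vector), where $\sign(0)$ must be interpreted so that the stated point still lies in $\LOneBall_n$ and trivially attains the maximum value $0$; the $\sign$ convention makes this benign since any vertex works when $c=\zero$.
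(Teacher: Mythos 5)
Your proposal is correct and matches the paper's own justification: the paper states Observation~\ref{obs:L1Vertex} without a formal proof, but the surrounding text appeals to exactly your two arguments --- the vertex principle of Lemma~\ref{lem:convHull} applied to $\LOneBall_n = \conv\left(\SetOf{\pm\unit_j}{j\in[n]}\right)$, and equivalently the $\ell_1$/$\ell_\infty$ norm duality of Observation~\ref{obs:gapDualNorm}. Your handling of the degenerate case $c=\zero$ is a careful touch the paper leaves implicit.
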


Using this observation for $c=-\nabla f(x)$ in our general Algorithm~\ref{alg:greedyConvex}, we therefore directly obtain the following simple method for $\ell_1$-regularized convex optimization, as depicted in the Algorithm~\ref{alg:greedyL1}.

\begin{algorithm}[h!]
  \caption{Sparse Greedy on the $\ell_1$-Ball}
  \label{alg:greedyL1}
\begin{algorithmic}
  \STATE {\bfseries Input:} Convex function $f$, target accuracy $\varepsilon$
  \STATE {\bfseries Output:} $\varepsilon$-approximate solution for problem~(\ref{eq:optVecL1})
  \STATE Set $x^{(0)}:= \0$
  \FOR{$k=0\dots\infty$}
  \STATE Compute $i := \argmax_i \abs{\left(\nabla f(x^{(k)})\right)_i}$,
  \STATE and let $s:= \unit_i \cdot \sign\left( \left(-\nabla f(x^{(k)})\right)_i \right)$
  \STATE Let $\alpha := \frac2{k+2}$
  \STATE Update $x^{(k+1)}:=x^{(k)}+\alpha(s - x^{(k)})$
  \ENDFOR
\end{algorithmic}
\end{algorithm}

Observe that in each iteration, this algorithm only introduces at most one new non-zero coordinate, so that the sparsity of $x^{(k)}$ is always upper bounded by the number of steps $k$. This means that the method is again of coordinate-descent-type, as in the simplex case of the previous Section~\ref{subsec:algSimplex}. Its convergence analysis again directly follows from the general analysis from Section~\ref{sec:optGenConvex}.

\begin{theorem}[Convergence of Sparse Greedy on the $\ell_1$-Ball]\label{thm:sparseL1}
For each $k\ge1$, the iterate $x^{(k)}$ of Algorithm~\ref{alg:greedyL1} satisfies
\[
f(x^{(k)}) - f(x^*) \le \frac{4C_f}{k+2} \ .
\]
where $x^*\in \LOneBall_n$ is an optimal solution to problem~(\ref{eq:optVecL1}).

Furthermore, for any $\varepsilon>0$, after at most $2\left\lceil\frac{4C_f}{\varepsilon}\right\rceil +1 = O\left(\frac1\varepsilon\right)$ many steps, it has an iterate $x^{(k)}$ of sparsity $O\left(\frac1\varepsilon\right)$, satisfying $g(x^{(k)}) \le\varepsilon$.
\end{theorem}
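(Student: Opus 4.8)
The plan is to recognize that Theorem~\ref{thm:sparseL1} is a direct specialization of the general machinery already established in Section~\ref{sec:optGenConvex}, applied to the particular domain $D = \LOneBall_n$. Since the algorithm \ref{alg:greedyL1} is literally an instance of the general Algorithm~\ref{alg:greedyConvex} with the exact linear primitive $\exactLin$ implemented via Observation~\ref{obs:L1Vertex}, essentially nothing new needs to be proven about the convergence rate itself; the work is in verifying that the hypotheses of the earlier theorems hold and in tracking the sparsity.

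First I would verify that the step $s := \unit_i \cdot \sign((-\nabla f(x^{(k)}))_i)$ with $i \in \argmax_j |(\nabla f(x^{(k)}))_j|$ correctly realizes $\exactLin(\nabla f(x^{(k)}), \LOneBall_n)$. By Observation~\ref{obs:L1Vertex} applied to $c = -\nabla f(x^{(k)})$, this $s$ maximizes $y^T(-\nabla f(x^{(k)}))$ over $\LOneBall_n$, equivalently it minimizes $\langle y, \nabla f(x^{(k)})\rangle$, which is exactly what $\exactLin$ requires. Hence the iterates of Algorithm~\ref{alg:greedyL1} coincide with those of the exact variant of Algorithm~\ref{alg:greedyConvex} on the domain $\LOneBall_n$.

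With that identification in hand, the primal bound $f(x^{(k)}) - f(x^*) \le \frac{4C_f}{k+2}$ follows \emph{verbatim} from Theorem~\ref{thm:primalGreedy} (exact variant), and the existence of an iterate with $g(x^{(k)}) \le \varepsilon$ after at most $2\lceil \frac{4C_f}{\varepsilon}\rceil + 1$ steps follows from Theorem~\ref{thm:primalDualGreedy} (using the fixed step-size schedule in the second phase, or line-search as noted in the footnote). For the sparsity claim I would argue exactly as in the simplex case: the update $x^{(k+1)} = x^{(k)} + \alpha(s - x^{(k)})$ is a convex combination of $x^{(k)}$ and the single basis vector $\pm\unit_i$, so $\supp(x^{(k+1)}) \subseteq \supp(x^{(k)}) \cup \{i\}$, increasing the support by at most one per iteration. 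Starting from $x^{(0)} = \0$, after $k$ steps $\card(x^{(k)}) \le k = O(\frac1\varepsilon)$, matching the iteration count.

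I do not anticipate a genuine obstacle here, since the statement is designed to be a corollary. The only points requiring a little care are: (a) confirming the sign/argmax primitive really is the exact linearization minimizer, which Observation~\ref{obs:L1Vertex} delivers; and (b) ensuring the step-size technicality flagged in the footnote to Theorem~\ref{thm:sparseSimplex} applies identically, namely that the step-sizes in the second half must be fixed to $\frac{2}{K+2}$ for Theorem~\ref{thm:primalDualGreedy} to guarantee the small duality gap. Accordingly I would write the proof as a one-line corollary, invoking Theorem~\ref{thm:primalGreedy}, Theorem~\ref{thm:primalDualGreedy}, and Observation~\ref{obs:L1Vertex}, together with the support-growth observation for the sparsity bound.
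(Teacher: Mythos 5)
Your proposal is correct and matches the paper's proof exactly: the paper proves Theorem~\ref{thm:sparseL1} as a one-line corollary of Theorem~\ref{thm:primalGreedy} and Theorem~\ref{thm:primalDualGreedy}, with the identification of the primitive via Observation~\ref{obs:L1Vertex} and the support-growth argument for sparsity made just as you describe (in the surrounding text rather than inside the proof environment). Your added care about the fixed step-size schedule in the second phase is the same technicality the paper flags in the footnote to Theorem~\ref{thm:sparseSimplex}.
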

\begin{proof}
This is a corollary of Theorem~\ref{thm:primalGreedy} and Theorem~\ref{thm:primalDualGreedy}.
\end{proof}

\paragraph{The Duality Gap, and Duality of the Norms.}
We recall the definition of the duality gap~(\ref{eq:defGap}) given by the linearization at any point $x\in\LOneBall_n$, see Section~\ref{sec:poorDual}. Thanks to our Observation~\ref{obs:gapDualNorm}, the computation of the duality gap in the case of the $\ell_1$-ball here becomes extremely simple, and is given by the norm that is dual to the $\ell_1$-norm, namely the $\ell_\infty$-norm of the used subgradient, i.e.,
\[
\begin{split}
g(x,d_x) &= \norm{ d_x }_\infty  + x^T d_x ,\ \ \ \ \text{ and}\\
g(x) &= \norm{ \nabla f(x) }_\infty + x^T \nabla f(x) \ .
\end{split}
\]
Alternatively, the same expression can also be derived directly (without explicitly using duality of norms) by applying the Observation~\ref{obs:L1Vertex}.

\paragraph{A Lower Bound on the Sparsity.}
The lower bound of $\Omega\left(\frac1\varepsilon\right)$ on the sparsity as proved in Section~\ref{subsec:vecLowerBound} for the simplex case in fact directly translates to the $\ell_1$-ball as well. Instead of choosing the objective function $f$ as the distance to the origin (which is part of the $\ell_1$-ball), we consider the optimization problem $\displaystyle\min_{\norm{x}_1 \le 1} f(x) := \norm{x-r}_2^2$ with respect to the fixed point $r := (\frac2n,\dots,\frac2n) \in \R^n$. This problem is of the form~(\ref{eq:optVecL1}), and corresponds to optimizing the Euclidean distance to the point $r$ given by mirroring the origin at the positive facet of the $\ell_1$-ball. Here by the ``positive facet'', we mean the hyperplane defined by the intersection of the boundary of the $\ell_1$-ball with the positive orthant, which is exactly the unit simplex. Therefore, the proof for the simplex case from Section~\ref{subsec:vecLowerBound} holds analogously for our setting here.

We have thus obtained that sparsity $O\left(\frac1\varepsilon\right)$ as obtained by the greedy Algorithm~\ref{alg:greedyL1} is indeed best possible for $\ell_1$-regularized optimization problems of the form~(\ref{eq:optVecL1}).

\paragraph{Using Barycentric Coordinates Instead.} 
Clarkson~\cite[Theorem 4.2]{Clarkson:2010hv} already observed that  Algorithm~\ref{alg:greedySimplex} over the simplex $\Delta_n$ can be used to optimize a convex function $f(y)$ over arbitrary convex hulls, by just using barycentric coordinates $y = Ax, \ x\in \Delta_n$, for $A\in\R^{n \times m}$ being the matrix containing all $m$ vertices of the convex domain as its columns. Here however we saw that for the $\ell_1$-ball, the steps of the algorithm are even slightly simpler, as well as that the duality gap can be computed instantly from the $\ell_\infty$-norm of the gradient.

\paragraph{Applications.}
Our Algorithm~\ref{alg:greedyL1} applies to arbitrary convex vector optimization problems with an $\norm{.}_1$-norm regularization term, giving a guaranteed sparsity of $O\left(\frac1\varepsilon\right)$ for all these applications.

A classical example for problems of this class is given by the important \emph{$\norm{.}_1$-regularized least squares} regression approach, i.e.
\[
\min_{x \in \R^{n}} \norm{Ax-b}_{2}^{2} + \mu \norm{x}_{1} 
\] 
for a fixed matrix $A\in\R^{m\times n}$, a vector $b\in\R^m$ and a fixed regularization parameter $\mu>0$. The same problem is also known as \emph{basis pursuit de-noising} in the compressed sensing literature, which we will discuss more precisely in Section~\ref{sec:relComprSensing}.
The above formulation is in fact the Lagrangian formulation of the corresponding constrained problem for $\norm{x}_1\le t$ for some fixed parameter $t$ corresponding to $\mu$. This equivalent formulation is also known as the \emph{Lasso} problem~\cite{Tibshirani:1996wb} which is 
\[
\begin{array}{rl}
\displaystyle\min_{x \in \R^{n}} & \norm{Ax-b}_{2}^{2} \\
                       \st &\norm{x}_{1} \le t \ .
\end{array}
\] 
The above formulation is exactly a problem of our above form~(\ref{eq:optVecL1}), namely 
\[
\min_{\hat x \in \LOneBall_n} \norm{tA\hat x-b}_{2}^{2} \ ,
\] 
if we rescale the argument $x =: t\hat x$ so that $\norm{\hat x}_1\le1$.

Another important application for our result is \emph{logistic regression} with $\norm{.}_1$-norm regularization, see e.g.~\cite{Koh:2007wo}, which is also a convex optimization problem~\cite{Rennie:2005ww}. The reduction to an $\ell_1$-problem of our form~(\ref{eq:optVecL1}) works exactly the same way as described here.

\paragraph{Related Work.}
As we mentioned above, the optimization problem~(\ref{eq:optVecL1}) --- if $f$ is the squared error of a linear function --- is very well studied as the \emph{Lasso} approach, see e.g.~\cite{Tibshirani:1996wb} and the references therein.
For general objective functions $f$ of bounded curvature, the above interesting trade-off between sparsity and the approximation quality was already investigated by~\cite{ShalevShwartz:2010wq}, and also by our earlier paper~\cite{GartnerJaggi:2009} for the analogous case of optimizing over the simplex.
\cite[Theorem 2.4]{ShalevShwartz:2010wq} shows a sparse convergence analogous to our above Theorem~\ref{thm:sparseL1}, for the ``forward greedy selection'' algorithm on problem~(\ref{eq:optVecL1}), but only for the case that $f$ is differentiable.%

\subsection{Relation to Matching Pursuit and Basis Pursuit in Compressed Sensing}\label{sec:relComprSensing}
Both our sparse greedy Algorithm~\ref{alg:greedySimplex} for optimizing over the simplex and also Algorithm~\ref{alg:greedyL1} for general $\ell_1$-problems are 
very similar to the technique of \emph{matching pursuit}, which is one of the most popular techniques in sparse recovery in the vector case~\cite{Tropp:2004gc}.

Suppose we want to recover a sparse signal vector $x\in\R^n$ from a noisy measurement vector $Ax=y \in\R^m$. For a given dictionary matrix $A\in\R^{m\times n}$, matching pursuit iteratively chooses the dictionary element $A_i\in\R^m$ that has the highest inner product with the current residual, and therefore reduces the representation error $f(x) = \norm{Ax-y}_2^2$ by the largest amount. 
This choice of coordinate $i=\argmax_j A_j^T (Ax-y)$ exactly corresponds\footnote{%
The objective function $f(x) := \norm{Ax-y}_2^2$ can be written as $f(x) = (Ax-y)^T(Ax-y)=x^TA^TAx-2y^TAx-y^Ty$, so its gradient is  $\nabla f(x) = 2A^TAx-2A^Ty = 2A^T(Ax-y) \in \R^n$.
} %
to the choice of $i := \argmin_j \left(\nabla f(x^{(k)})\right)_j$ in Algorithm~\ref{alg:greedySimplex}.

Another variant of matching pursuit, called orthogonal matching pursuit (OMP)~\cite{Tropp:2004gc,Tropp:2007he}, includes an extra orthogonalization step, and is closer related to the coreset algorithms that optimize over the all existing set of non-zero coordinates before adding a new one, see e.g.~\cite[Algorithm 8.2]{Clarkson:2010hv}, or the analogous ``fully corrective'' variant of~\cite{ShalevShwartz:2010wq}. If $y=Ax$, with $x$ sparse and the columns of $A$ sufficiently incoherent, then OMP recovers the sparsest representation for the given~$y$~\cite{Tropp:2004gc}.

The paper \cite{Zhang:2011uy} recently proposed another algorithm that generalizes OMP, comes with a guarantee on correct sparse recovery, and also corresponds to ``completely optimize within each coreset''. The method uses the same choice of the new coordinate $i := \argmax_j \abs{\left(\nabla f(x^{(k)})\right)_j}$ as in our Algorithm~\ref{alg:greedyL1}.
However the analysis of~\cite{Zhang:2011uy} requires the not only bounded curvature as in our case, but also needs strong convexity of the objective function (which then also appears as a multiplicative factor in the number of iterations needed). Our Algorithm~\ref{alg:greedyL1} as well as the earlier method by~\cite{TongZhang:2003da} are simpler to implement, and have a lower complexity per iteration, as we do not need to optimize over several currently non-zero coordinates, but only change one coordinate by a fixed amount in each iteration.

Our Algorithm~\ref{alg:greedyL1} for general $\ell_1$-regularized problems also applies to solving the so called \emph{basis pursuit} problem~\cite{Chen:1998hm,Figueiredo:2007hz} and~\cite[Section 6.5.4]{Boyd:2004uz}, which is $\min_{x\in\R^n} \norm{x}_1$ s.t. $Ax=y$. %
Note that this is in fact just the constrained variant of the corresponding ``robust'' $\ell_{1}$-regularized least squares regression problem
\[
\min_{x\in\R^n} \norm{Ax-y}^2_2 + \mu \norm{x}_1 \ ,
\]
which is the equivalent trade-off variant of our problem of the form~(\ref{eq:optVecL1}). \cite{Figueiredo:2007hz} propose a traditional gradient descent technique for solving the above least squares problem, but do not give a convergence analysis. 

Solution path algorithms with approximation guarantees for related problems (obtaining solutions for all values of the tradeoff parameter $\mu$) have been studied in~\cite{Giesen:2010fx,Giesen:2012uj,Giesen:2012vh}, and the author's PhD thesis~\cite{Jaggi:2011ux}, building on the same duality gap concept we introduced in Section~\ref{sec:poorDual}.

\section{Optimization with Bounded $\ell_{\infty}$-Norm}\label{sec:vecLinf}

Applying our above general optimization framework for the special case of the domain being the $\norm{.}_\infty$-norm unit ball, we again obtain a very simple greedy algorithm. The running time will again correspond to $O\left(\frac1\varepsilon\right)$ many gradient evaluations. Formally, we consider problems of the form
\begin{equation}\label{eq:optVecLinfLe}
\begin{array}{rl}
   \displaystyle\mini_{x\in\R^n} & f(x)  \\
    s.t. &  \norm{x}_\infty \le 1 \ .
\end{array}
\end{equation}

We denote the feasible set, i.e. the $\norm{.}_\infty$-norm unit ball, by $\Box_n := \SetOf{x \in \R^n}{\norm{x}_\infty \le 1}$.
For this set, it will again be very simple to implement the internal primitive operation of optimizing a linear function over the same domain.
The following crucial observation allows us to implement $\exactLin\left(c,\Box_n \right)$ in a very simple way. This can also alternatively be interpreted as the known fact that the dual-norm to the $\ell_\infty$-norm is the $\ell_1$-norm, which also explains why the greedy algorithm we will obtain here is very similar to the $\ell_1$-version from the previous Section~\ref{sec:vecL1}.

\begin{observation}\label{obs:LinfPrimitive}
For any vector $c \in\R^n$, it holds that
\[
\signVec^c  \in  \argmax_{y\in\Box_n} \, y^Tc
\]
where $\signVec^c \in \R^n$ is the sign-vector of $c$, defined by the sign of each individual coordinate, i.e. $(\signVec^c)_i = \sign(c_i) \in \{-1,1\}$. 
\end{observation}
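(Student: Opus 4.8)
The plan is to verify directly that the sign-vector $\signVec^c$ maximizes the linear functional $y \mapsto y^Tc$ over the cube $\Box_n$. The key structural observation is that the objective separates completely across coordinates: we can write $y^Tc = \sum_{i=1}^n y_i c_i$, and the constraint $\norm{y}_\infty \le 1$ is equivalent to the $n$ independent box constraints $-1 \le y_i \le 1$ for each $i$. Because neither the objective nor the constraint set couples distinct coordinates, the maximization decomposes into $n$ separate one-dimensional problems $\max_{y_i \in [-1,1]} y_i c_i$.

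First I would solve each of these scalar subproblems. For a fixed coefficient $c_i$, the product $y_i c_i$ is maximized over $y_i \in [-1,1]$ by taking $y_i = +1$ when $c_i > 0$, by $y_i = -1$ when $c_i < 0$, and by any $y_i$ (say $y_i = \sign(c_i)$, with the convention $\sign(0)$ chosen arbitrarily in $\{-1,+1\}$) when $c_i = 0$. In every case the maximal value of the $i$-th term is $\abs{c_i}$, attained at $y_i = \sign(c_i)$. Summing these coordinatewise maxima gives $\max_{y \in \Box_n} y^Tc = \sum_{i=1}^n \abs{c_i} = \norm{c}_1$, and this value is simultaneously attained by the single point $y = \signVec^c$, which establishes that $\signVec^c \in \argmax_{y \in \Box_n} y^Tc$.

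Alternatively, and more in the spirit of the surrounding exposition, I would note that this is an instance of the earlier Observation~\ref{obs:gapDualNorm} on duality of norms: since $\Box_n$ is the unit ball of the $\ell_\infty$-norm, the supremum $\sup_{\norm{y}_\infty \le 1} y^Tc$ is by definition the dual norm $\norm{c}_*$, which for the $\ell_\infty$-norm is exactly the $\ell_1$-norm $\norm{c}_1 = \sum_i \abs{c_i}$. One then only needs to exhibit a maximizer, which is precisely $\signVec^c$. This route also connects cleanly to Lemma~\ref{lem:convHull}, since $\Box_n = \conv(\SetOf{\signVec}{\signVec \in \{-1,+1\}^n})$ is the convex hull of its $2^n$ sign-vector vertices, so the linear maximum must be attained at some vertex — and the separability argument identifies which one.

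I do not anticipate a genuine obstacle here, as this is an elementary exercise in linear optimization over a box. The only point requiring a small amount of care is the degenerate case $c_i = 0$, where the one-dimensional maximizer is non-unique; this is harmless because the statement only claims membership in the $\argmax$ set (not uniqueness), and any convention for $\sign(0) \in \{-1,+1\}$ yields a valid maximizer. I would keep the proof to the coordinatewise separation argument, as it is the most transparent and self-contained.
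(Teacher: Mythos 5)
Your proof is correct. The paper actually states Observation~\ref{obs:LinfPrimitive} without any formal proof at all: the surrounding text justifies it only by remarking that it ``can also alternatively be interpreted as the known fact that the dual-norm to the $\ell_\infty$-norm is the $\ell_1$-norm'' (i.e., via Observation~\ref{obs:gapDualNorm}), and by the general vertex argument of Lemma~\ref{lem:convHull}. Your main coordinatewise separation argument supplies the elementary verification the paper omits, and your two alternative routes are exactly the ones the paper gestures at, so there is no divergence in substance --- you have simply written out the details. Your attention to the degenerate case $c_i = 0$ is also exactly right: the statement's definition $(\signVec^c)_i = \sign(c_i) \in \{-1,1\}$ silently presupposes a convention for $\sign(0)$, and as you note, any choice in $\{-1,+1\}$ still attains the coordinatewise maximum $\abs{c_i} = 0$, which suffices since only membership in the $\argmax$ is claimed.
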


Using this observation for $c=-d_x$ in our general Algorithm~\ref{alg:greedyConvex}, we directly obtain the following simple method for optimization over a box-domain $\Box_n$, as depicted in Algorithm~\ref{alg:greedyBox}.

\begin{algorithm}[h!]
  \caption{Sparse Greedy on the Cube}
  \label{alg:greedyBox}
\begin{algorithmic}
  \STATE {\bfseries Input:} Convex function $f$, target accuracy $\varepsilon$
  \STATE {\bfseries Output:} $\varepsilon$-approximate solution for problem~(\ref{eq:optVecLinfLe})
  \STATE Set $x^{(0)}:= \zero$
  \FOR{$k=0\dots\infty$}
  \STATE Compute the sign-vector $\signVec$ of $\nabla f(x^{(k)})$, such that
  \STATE \quad\quad $\signVec_i = \sign\left(\left(-\nabla f(x^{(k)})\right)_i\right),$ $~~~i=1..n$
  \STATE Let $\alpha := \frac2{k+2}$
  \STATE Update $x^{(k+1)}:=x^{(k)}+\alpha(\signVec - x^{(k)})$
  \ENDFOR
\end{algorithmic}
\end{algorithm}

The convergence analysis again directly follows from the general analysis from Section~\ref{sec:optGenConvex}.
\begin{theorem}
For each $k\ge1$, the iterate $x^{(k)}$ of Algorithm~\ref{alg:greedyBox} satisfies
\[
f(x^{(k)}) - f(x^*) \le \frac{4C_f}{k+2} \ .
\]
where $x^*\in \Box_n$ is an optimal solution to problem~(\ref{eq:optVecLinfLe}).

Furthermore, for any $\varepsilon>0$, after at most $2\left\lceil\frac{4C_f}{\varepsilon}\right\rceil +1 = O\left(\frac1\varepsilon\right)$ many steps, it has an iterate $x^{(k)}$ with $g(x^{(k)}) \le\varepsilon$.
\end{theorem}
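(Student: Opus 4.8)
The plan is to recognize this statement as a direct corollary of the general convergence results of Section~\ref{sec:optGenConvex}, exactly as was done for the simplex case (Theorem~\ref{thm:sparseSimplex}) and the $\ell_1$-ball case (Theorem~\ref{thm:sparseL1}). The only genuinely new content is to verify that Algorithm~\ref{alg:greedyBox} really is the specialization of the general exact-linear Algorithm~\ref{alg:greedyConvex} to the domain $D = \Box_n$, after which the two bounds follow mechanically.

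First I would check that the step direction computed in Algorithm~\ref{alg:greedyBox} correctly realizes the exact linearized primitive $\exactLin(\nabla f(x^{(k)}), \Box_n)$ required by the general scheme. Recall that $\exactLin(c, D)$ must return a minimizer of $\langle s, c \rangle$ over $D$; with $c = \nabla f(x^{(k)})$ this means minimizing $\langle s, \nabla f(x^{(k)}) \rangle$, equivalently maximizing $\langle s, -\nabla f(x^{(k)}) \rangle$ over $\Box_n$. By Observation~\ref{obs:LinfPrimitive} applied to $c = -\nabla f(x^{(k)})$, such a maximizer is the sign-vector with entries $\signVec_i = \sign((-\nabla f(x^{(k)}))_i)$, which is precisely the direction used in Algorithm~\ref{alg:greedyBox}. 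I would also note that the starting point $x^{(0)} = \0$ is feasible since $\0 \in \Box_n$, and that the step-sizes $\alpha = \frac{2}{k+2}$ together with the update rule coincide with those of Algorithm~\ref{alg:greedyConvex}. Hence Algorithm~\ref{alg:greedyBox} is nothing but the exact variant of the general algorithm run on $D = \Box_n$.

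With this identification in hand, the primal bound $f(x^{(k)}) - f(x^*) \le \frac{4C_f}{k+2}$ is immediate from Theorem~\ref{thm:primalGreedy}. For the duality-gap claim I would invoke Theorem~\ref{thm:primalDualGreedy} with $K := \lceil 4C_f/\varepsilon \rceil$: running the exact variant for $K$ iterations and then continuing for another $K+1$ iterations yields an iterate $x^{(\hat k)}$ with $g(x^{(\hat k)}) \le \varepsilon$, and the total iteration count is $2K+1 = 2\lceil 4C_f/\varepsilon\rceil + 1 = O(\frac{1}{\varepsilon})$. The only subtlety worth flagging is the step-size convention needed for the primal-dual theorem to apply: as noted in the footnote to Theorem~\ref{thm:sparseSimplex}, the step-size in the second block of iterations must be held fixed at $\alpha^{(k)} := \frac{2}{K+2}$ (or one switches to the line-search variant of Section~\ref{subsec:lineSearch}), so I would phrase the convergence under that convention. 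Since $C_f$ is finite whenever $f$ has bounded curvature over the compact box $\Box_n$ (bounded e.g.\ via Lemma~\ref{lem:CfHessBound}), nothing further is required. I do not expect any real obstacle; the main point of care is simply getting the sign right when translating the minimization primitive $\exactLin$ into the maximization form of Observation~\ref{obs:LinfPrimitive}, and remembering the fixed-step-size technicality in the duality-gap half of the argument.
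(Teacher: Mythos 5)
Your proposal is correct and matches the paper's argument exactly: the paper proves this theorem as a one-line corollary of Theorem~\ref{thm:primalGreedy} and Theorem~\ref{thm:primalDualGreedy}, with Observation~\ref{obs:LinfPrimitive} justifying that the sign-vector step implements $\exactLin$ over $\Box_n$, just as you spell out. Your added care about the sign convention and the fixed step-size technicality in the second block of iterations is consistent with the paper's footnote to Theorem~\ref{thm:sparseSimplex}.
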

\begin{proof}
This is a corollary of Theorem~\ref{thm:primalGreedy} and Theorem~\ref{thm:primalDualGreedy}.
\end{proof}

\paragraph{The Duality Gap, and Duality of the Norms.}
We recall the definition of the duality gap~(\ref{eq:defGap}) given by the linearization at any point $x\in\Box_n$, see Section~\ref{sec:poorDual}. Thanks to our Observation~\ref{obs:gapDualNorm}, the computation of the duality gap in the case of the $\ell_\infty$-ball here becomes extremely simple, and is given by the norm that is dual to the $\ell_\infty$-norm, namely the $\ell_1$-norm of the used subgradient, i.e.,
\[
\begin{split}
g(x,d_x) &= \norm{ d_x }_1  + x^T d_x ,\ \ \ \ \text{ and}\\
g(x) &= \norm{ \nabla f(x) }_1 + x^T \nabla f(x) \ .
\end{split}
\]
Alternatively, the same expression can also be derived directly (without explicitly using duality of norms) by applying the Observation~\ref{obs:LinfPrimitive}.

\paragraph{Sparsity and Compact Representations.} The analogue of ``sparsity'' as in Sections~\ref{sec:vecSimplex} and~\ref{sec:vecL1} in the context of our Algorithm~\ref{alg:greedyBox} means that we can describe the obtained approximate solution $x$ as a convex combination of few (i.e. $O(\frac1\varepsilon)$ many) cube vertices. This does not imply that $x$ has few non-zero coordinates, but that we have a compact representation given by only $O(\frac1\varepsilon)$ many binary $n$-vectors indicating the corresponding cube vertices, of which $x$ is a convex combination.

\paragraph{Applications.} Any convex problem under coordinate-wise upper and lower constraints can be transformed to the form~(\ref{eq:optVecLinfLe}) by re-scaling the optimization argument.
A specific interesting application was given by~\cite{Mangasarian:2011ib}, who have demonstrated that integer linear programs can be relaxed to convex problems of the above form, such that the solutions coincide with high probability under some mild additional assumptions.

\paragraph{Using Barycentric Coordinates Instead.} 
Clarkson~\cite[Theorem 4.2]{Clarkson:2010hv} already observed that  Algorithm~\ref{alg:greedySimplex} over the simplex $\Delta_n$ can be used to optimize a convex function $f(y)$ over arbitrary convex hulls, by just using barycentric coordinates $y = Ax, \ x\in \Delta_n$, for $A\in\R^{n \times m}$ being the matrix containing all $m$ vertices of the convex domain as its columns. Here however we saw that for the unit box, the steps of the algorithm are much simpler, as well as that the duality gap can be computed instantly, without having to explicitly deal with the exponentially many vertices (here $m=2^n$) of the cube.

\section{Semidefinite Optimization with Bounded Trace}\label{sec:algTrace}

We will now apply the greedy approach from the previous Section~\ref{sec:optGenConvex} to semidefinite optimization problems, for the case of bounded trace.
The main paradigm in this section will be to understand the best achievable low-rank property of approximate solutions as a function of the approximation quality. 

In particular, we will show that our general Algorithm~\ref{alg:greedyConvex} and its analysis do lead to Hazan's method for convex semidefinite optimization with bounded trace, as given by~\cite{Hazan:2008kz}. Hazan's algorithm can also be used as a simple solver for general SDPs.
\cite{Hazan:2008kz} has already shown that guaranteed $\varepsilon$-approximations of rank $O\left(\frac{1}{\varepsilon}\right)$ can always be found. Here we will also show that this is indeed optimal, by providing an asymptotically matching lower bound in Section~\ref{sec:matLowerBound}.
Furthermore, we fix some problems in the original analysis of~\cite{Hazan:2008kz}, and require only a weaker approximation quality for the internal linearized primitive problems. We also propose two improvement variants for the method in Section~\ref{subsec:imprAlgos}.

Later in Section~\ref{chap:optNucMax}, we will discuss the application of these algorithms for nuclear norm and max-norm optimization problems, which have many important applications in practice, such as dimensionality reduction, low-rank recovery as well as matrix completion and factorizations.

We now consider convex optimization problems of the form~(\ref{eq:optGenConvex}) over the space $\X = \Sym^{n\times n}$ of symmetric matrices, equipped with the standard Frobenius inner product $\langle X,Y \rangle = X\bullet Y$. It is left to the choice of the reader to identify the symmetric matrices either with $\R^{n^2}$ and consider functions with $f(X) = f(X^T)$, or only ``using'' the variables in the upper right (or lower left) triangle, corresponding to $\R^{n(n+1)/2}$.
In any case, the subgradients or gradients of our objective function $f$ need to be available in the same representation (same choice of basis for the vector space $\X$).
 
Formally, we consider the following special case of the general optimization problems~(\ref{eq:optGenConvex}), i.e.,
\begin{equation}\label{eq:optTrEq1}
\begin{array}{rl}
   \displaystyle\mini_{X \in \Sym^{n\times n}} & f(X)  \\
    s.t. &  \tr(X) = 1 \ ,\\
         &  X \succeq 0
\end{array}
\end{equation}
We will write $\Spectahedron := \SetOf{X \in \Sym^{n \times n}}{X \succeq 0, ~ \tr(X) = 1}$ for the feasible set, that is the PSD matrices of unit trace. The set $\Spectahedron$ is sometimes called the \emph{Spectahedron}, and can be seen as a natural generalization of the unit simplex to symmetric matrices. By the Cholesky factorization, it can be seen that the Spectahedron is the convex hull of all rank-1 matrices of unit trace (i.e. the matrices of the form $vv^T$ for a unit vector $v\in \R^n$, $\norm{v}_2=1$). %

\subsection{Low-Rank Semidefinite Optimization with Bounded Trace: The $O(\frac1\varepsilon)$ Algorithm by Hazan}\label{subsec:algHazan}

Applying our general greedy Algorithm~\ref{alg:greedyConvex} that we studied in Section~\ref{sec:optGenConvex} to the above semidefinite optimization problem, we directly obtain the following Algorithm~\ref{alg:greedyHazan}, which is Hazan's method~\cite{Hazan:2008kz,Gartner:2011tl}.

Note that this is now a first application of Algorithm~\ref{alg:greedyConvex} where the internal linearized problem $\approxLin()$ is not trivial to solve, contrasting the applications for vector optimization problems we studied above.  The algorithm here obtains low-rank solutions (sum of rank-1 matrices) to any convex optimization problem of the form~(\ref{eq:optTrEq1}). More precisely, it guarantees $\varepsilon$-small duality gap after at most $O\left(\frac{1}{\varepsilon}\right)$ iterations, where each iteration only involves the calculation of a single approximate eigenvector of a matrix $M \in \Sym^{n\times n}$. We will see that in practice for example Lanczos' or the power method can be used as the internal optimizer $\approxLin()$.

\begin{algorithm}[h!]
  \caption{Hazan's Algorithm / Sparse Greedy for Bounded Trace}
  \label{alg:greedyHazan}
\begin{algorithmic}
  \STATE {\bfseries Input:} Convex function $f$ with curvature constant $C_{f}$, target accuracy $\varepsilon$
  \STATE {\bfseries Output:} $\varepsilon$-approximate solution for problem~(\ref{eq:optTrEq1})
  \STATE Set $X^{(0)}:= vv^T$ for an arbitrary unit length vector $v \in \R^n$.
  \FOR{$k=0\dots\infty$}
  \STATE Let $\alpha := \frac2{k+2}$
  \STATE Compute $v := v^{(k)} = \mbox{ApproxEV}\left(\nabla f(X^{(k)}), \alpha  C_f\right)$
  \STATE Update $X^{(k+1)}:=X^{(k)}+\alpha(vv^T - X^{(k)})$
  \ENDFOR
\end{algorithmic}
\end{algorithm}

Here $\textsc{ApproxEV}(A, \varepsilon')$ is a subroutine that delivers an approximate smallest eigenvector (the eigenvector corresponding to the smallest eigenvalue) to a matrix $A$ with the desired accuracy $\varepsilon' > 0$. More precisely, it must return a unit length vector $v$ such that $v^T A v \le \lmin(A) + \varepsilon'$. Note that as our convex function $f$ takes a symmetric matrix $X$ as an argument, its gradients $\nabla f(X)$ are given as symmetric matrices as well.

If we want to understand this proposed Algorithm~\ref{alg:greedyHazan} as an instance of the general convex optimization Algorithm~\ref{alg:greedyConvex}, we just need to explain why the largest eigenvector should indeed be a solution to the internal linearized problem $\approxLin()$, as required in Algorithm~\ref{alg:greedyConvex}. Formally, we have to show that $v:= \textsc{ApproxEV}(A, \varepsilon')$ does approximate the linearized problem, that is
\[
vv^T \bullet A \le \displaystyle\min_{Y\in \Spectahedron} \, Y \bullet A + \varepsilon'
\]
for the choice of $v:= \textsc{ApproxEV}(A, \varepsilon')$, and any matrix $A\in \Sym^{n\times n}$.

This fact is formalized in Lemma~\ref{lem:minAtVtxOfSpect} below, and
will be the crucial property enabling the fast implementation of Algorithm~\ref{alg:greedyHazan}. 

Alternatively, if exact eigenvector computations are available, we can also implement the exact variant of  Algorithm~\ref{alg:greedyConvex} using $\exactLin()$, thereby halving the total number of iterations.

Observe that an approximate eigenvector here is significantly easier to compute than a projection onto the feasible set $\Spectahedron$. If we were to find the $\frobnorm{.}$-closest PSD matrix to a given symmetric matrix $A$, we would have to compute a complete eigenvector decomposition of $A$, and only keeping those corresponding to positive eigenvalues, which is computationally expensive. By contrast, a single approximate smallest eigenvector computation as in $\textsc{ApproxEV}(A, \varepsilon')$ can be done in near linear time in the number of non-zero entries of $A$. We will discuss the implementation of $\textsc{ApproxEV}(A, \varepsilon')$ in more detail further below.

\paragraph{Sparsity becomes Low Rank.}
As the rank-1 matrices are indeed the ``vertices'' of the domain $\Spectahedron$ as shown in Lemma~\ref{lem:minAtVtxOfSpect} below, our Algorithm~\ref{alg:greedyHazan} can be therefore seen as a matrix generalization of the sparse greedy approximation algorithm  of~\cite{Clarkson:2010hv} for vectors in the unit simplex, see Section~\ref{sec:vecSimplex}, which has seen many successful applications.
Here \emph{sparsity} just gets replaced by \emph{low rank}. By the analysis of the general algorithm in Theorem~\ref{thm:primalGreedy}, we already know that we obtain $\varepsilon$-approximate solutions for any convex optimization problem~(\ref{eq:optTrEq1}) over the spectahedron $\Spectahedron$. Because each iterate $X^{(k)}$ is represented as a sum (convex combination) of $k$ many rank-$1$ matrices $vv^T$, it follows that $X^{(k)}$ is of rank at most $k$. Therefore, the resulting $\varepsilon$-ap\-prox\-i\-ma\-tions are of low rank, i.e. rank~$O\big(\frac1\varepsilon\big)$. %

For large-scale applications where $\frac1\varepsilon \ll n$, the representation of $X^{(k)}$ as a sum of rank-1 matrices is much more efficient than storing an entire matrix $X^{(k)} \in \Sym^{n\times n}$. Later in Section~\ref{sec:applicationMatCompl} (or see also~\cite{Jaggi:2010tz}) we will demonstrate that Algorithm~\ref{alg:greedyHazan} can readily be applied to practical problems for $n \ge 10^6$ on an ordinary computer,  well exceeding the possibilities of interior point methods.

\cite{Hazan:2008kz} already observed that the same Algorithm~\ref{alg:greedyHazan} with a well-crafted function $f$ can also be used to approximately solve arbitrary SDPs with bounded trace, which we will briefly explain in Section~\ref{subsec:arbitrSDP}.

\paragraph{Linearization, the Duality Gap, and Duality of the Norms.}
Here we will prove that the general duality gap~(\ref{eq:defGap}) can be calculated very efficiently for the domain being the spectahedron $\Spectahedron$. From the following Lemma~\ref{lem:minAtVtxOfSpect}, we obtain that
\begin{equation}\label{eq:gapTrace}
\begin{split}
g(X) =& X\bullet \nabla f(X) + \lmax(-\nabla f(X)) \\
       =& X\bullet \nabla f(X) - \lmin(\nabla f(X)) \ .
\end{split}
\end{equation}
As predicted by our Observation~\ref{obs:gapDualNorm} on formulating the duality gap, we have again obtained the \emph{dual norm} to the norm that determines the domain $D$. It can be seen that over the space of symmetric matrices, the dual norm of the matrix trace-norm (also known as the nuclear norm) is given by the spectral norm, i.e. the largest eigenvalue. To see this, we refer the reader to the later Section~\ref{sec:nucNorm} on the properties of the nuclear norm and its dual characterization.

The following Lemma~\ref{lem:minAtVtxOfSpect} shows that any linear function attains its minimum and maximum at a ``vertex'' of the Spectahedron $\Spectahedron$, as we have already proved for the case of general convex hulls in Lemma~\ref{lem:convHull}.

\begin{lemma}\label{lem:minAtVtxOfSpect}
$\!$The spectahedron is the convex hull of the rank-1 matrices,
\[
\Spectahedron = \conv(\SetOf{vv^T}{v \in \R^n, \norm{v}_2 = 1}) \ .
\]
Furthermore, for any symmetric matrix $A \in \Sym^{n\times n}$, it holds that
\[
  \max_{X \in \Spectahedron} A \bullet X   =  \lmax(A) \ .
\]
\end{lemma}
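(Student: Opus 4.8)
The plan is to establish the two claims in order, deriving the maximization statement from the convex-hull identity together with the already-proven Lemma~\ref{lem:convHull} on linear optimization over convex hulls.

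For the set identity $\Spectahedron = \conv(\SetOf{vv^T}{\norm{v}_2 = 1})$, I would prove both inclusions. The inclusion ``$\supseteq$'' is immediate: each rank-one matrix $vv^T$ with $\norm{v}_2=1$ is symmetric, is positive semidefinite (since $x^T vv^T x = \langle v,x\rangle^2 \ge 0$ for all $x$), and has $\tr(vv^T) = \norm{v}_2^2 = 1$, hence lies in $\Spectahedron$; as $\Spectahedron$ is convex, it contains the whole convex hull. For the reverse inclusion ``$\subseteq$'', I would invoke the spectral theorem: any $X\in\Spectahedron$ is symmetric, so it diagonalizes as $X = \sum_{i=1}^n \lambda_i u_i u_i^T$ with orthonormal eigenvectors $u_i$ (each of unit length) and real eigenvalues $\lambda_i$. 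Positive semidefiniteness forces $\lambda_i \ge 0$, and the trace constraint gives $\sum_i \lambda_i = \tr(X) = 1$. Thus $X$ is a convex combination of the unit-trace rank-one matrices $u_i u_i^T$, so that indeed $X \in \conv(\SetOf{vv^T}{\norm{v}_2=1})$.

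Given this, the maximization claim follows quickly. Since $\Spectahedron$ is the convex hull of the rank-one ``vertices'' $vv^T$, Lemma~\ref{lem:convHull} applied to the linear function $X\mapsto A\bullet X$ guarantees that the maximum over $\Spectahedron$ is attained at some such vertex, whence $\max_{X\in\Spectahedron} A\bullet X = \max_{\norm{v}_2=1} A\bullet vv^T$. It then remains to rewrite $A\bullet vv^T = \tr(A\, vv^T) = v^T A v$ and to apply the variational (Rayleigh-quotient) characterization of the largest eigenvalue of a symmetric matrix, namely $\max_{\norm{v}_2=1} v^T A v = \lmax(A)$, which completes the argument.

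I do not expect a genuine obstacle here; the whole statement is a standard consequence of the spectral theorem. The only points demanding mild care are checking that the eigenvalues are nonnegative and sum to one (so that the $u_i u_i^T$ really enter with legitimate convex-combination weights), and recalling the Rayleigh-quotient identity itself, which follows by expanding $v$ in an eigenbasis of $A$ and observing that $v^T A v = \sum_i \lambda_i \langle u_i, v\rangle^2$ is a convex combination of the eigenvalues, maximized by placing all the weight on a top eigenvector.
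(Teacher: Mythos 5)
Your proposal is correct, and the overall architecture matches the paper's: prove the convex-hull identity by decomposing an arbitrary $X\in\Spectahedron$ into unit-trace rank-one matrices, then reduce the linear maximization to unit vectors and finish with the Rayleigh-quotient characterization of $\lmax$. The one genuine difference is the decomposition you use. You invoke the spectral theorem, writing $X=\sum_i \lambda_i u_iu_i^T$ with orthonormal eigenvectors and nonnegative eigenvalues summing to one; the paper instead takes a Cholesky factorization $X=U^TU$, lets $\alpha_i$ be the squared norms of the rows of $U$ and $u_i$ the rows rescaled to unit length, and obtains $X=\sum_i \alpha_i u_iu_i^T$ with $\sum_i\alpha_i=\tr(U^TU)=\tr(X)=1$. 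Note that the Cholesky route does not produce orthogonal $u_i$, but orthogonality is never needed --- only that the weights are nonnegative and sum to one. The paper's choice is deliberate: it avoids appealing to the spectral theorem (the author makes exactly this point again when proving Lemma~\ref{lem:matPrimalLB}, where the spectral theorem is used and a Cholesky-based alternative is mentioned as the more elementary but longer path). Your eigendecomposition is the more standard argument and has the small aesthetic advantage that the convex weights are intrinsic invariants (the eigenvalues) rather than artifacts of a factorization; the paper's argument is self-contained at a lower level of machinery. A second cosmetic difference: you cleanly cite Lemma~\ref{lem:convHull} to pass from the hull to its ``vertices,'' whereas the paper's proof redoes that convex-combination computation inline; both are legitimate, and the lemma's preamble in the paper explicitly draws the same connection you do.
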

\begin{proof}
  Clearly, it holds that $vv^T \in \Spectahedron$ for any unit length vector $v\in\R^n$, as $\tr(vv^T) = \norm{v}_2^2$.
  To prove the other inclusion, we consider an arbitrary matrix $X \in \Spectahedron$, and let $X = U^T U$ be its Cholesky factorization.
  We let $\alpha_i$ be the squared norms of the rows of $U$, and let $u_i$ be the row vectors of $U$, scaled to unit length. From the observation $1= \tr(X) = \tr(U^T U) = \tr(U U^T) = \sum_{i} \alpha_i$ it follows that any $X \in \Spectahedron$ can be written as a convex combination of at most $n$ many rank-$1$ matrices $X = \sum_{i=1}^{n} \alpha_i u_i u_i^T$ with unit vectors $u_i \in \R^n$, proving the first part of the claim. Furthermore, this implies that we can write
\[
    \max_{X \in \Spectahedron} \ A \bullet X 
    = \max_{u_i,\alpha_i} \ A \bullet \sum_{i=1}^{n} \alpha_i  u_i u_i^T
    = \max_{u_i,\alpha_i} \ \sum_{i=1}^{n} \alpha_i (A \bullet u_i u_i^T),
\]
  where the maximization $\max_{u_i,\alpha_i}$ is taken over unit vectors $u_i \in
  \R^n$, $\norm{u_i} = 1$, for $1 \le i \le n$, and real
  coefficients $\alpha_i \ge 0$, with $\sum_{i=1}^{n} \alpha_i = 1$. 
  Therefore
\[
\begin{array}{rll}\displaystyle
    \max_{X \in \Spectahedron} A \bullet X
    &= \displaystyle\max_{u_i,\alpha_i} \ \sum_{i=1}^{n} \alpha_i (A \bullet u_i u_i^T) \\
    &= \displaystyle \max_{v \in \R^n, \norm{v} = 1} A \bullet v v^T \\
    &= \displaystyle \max_{v \in \R^n, \norm{v} = 1} v^T A v \\
    &= \displaystyle  \lmax \left(A\right),
\end{array}
\]
  where the last equality is the variational characterization of the largest eigenvalue.
\end{proof}

\paragraph{Curvature.}
We know that the constant in the actual running time for a given convex function $f: \Sym^{d \times d} \rightarrow \R$ is given by the \emph{curvature constant} $C_{f}$ as given in~(\ref{eq:Cf}), which for the domain $\Spectahedron$ becomes
\begin{equation}\label{eq:CfTrace}
C_f := \sup_{\substack{X,V \in \Spectahedron, \, \alpha\in[0,1], \\
                    Y=X+\alpha(V-X)}}
            \textstyle\frac{1}{\alpha^{2}} \big(f(Y)-f(X)+(Y-X) \bullet \nabla f(X) \big) \ .
\end{equation}

\paragraph{Convergence.}
We can now see the convergence analysis for Algorithm~\ref{alg:greedyHazan} following directly as a corollary of our simple analysis of the general framework in Section~\ref{sec:optGenConvex}. The following theorem proves that $O\big(\frac1\varepsilon\big)$ many iterations are sufficient to obtain primal error $\le \varepsilon$.
This result was already known in~\cite[Theorem 1]{Hazan:2008kz}, or~\cite[Chapter 5]{Gartner:2011tl} where some corrections to the original paper were made.

\begin{theorem}\label{thm:algTrace}
For each $k\ge1$, the iterate $X^{(k)}$ of Algorithm~\ref{alg:greedyHazan} satisfies
\[
f(X^{(k)}) - f(X^*) \le \frac{8C_f}{k+2} \ .
\]
where $X^*\in \Spectahedron$ is an optimal solution to problem~(\ref{eq:optTrEq1}).

Furthermore, for any $\varepsilon>0$, after at most $2\left\lceil\frac{8C_f}{\varepsilon}\right\rceil +1 = O\left(\frac1\varepsilon\right)$ many steps, it has an iterate $X^{(k)}$ of rank $O\left(\frac1\varepsilon\right)$, satisfying $g(X^{(k)}) \le\varepsilon$.
\end{theorem}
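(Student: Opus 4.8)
The plan is to recognize Theorem~\ref{thm:algTrace} as essentially a corollary of the general convergence results (Theorems~\ref{thm:primalGreedy} and~\ref{thm:primalDualGreedy}), once we check that Algorithm~\ref{alg:greedyHazan} is really the approximate variant of the general Algorithm~\ref{alg:greedyConvex} specialized to the domain $D = \Spectahedron$. The only genuinely new ingredient is verifying that the eigenvector subroutine $\textsc{ApproxEV}$ correctly realizes the approximate linearized primitive $\approxLin()$; everything else is bookkeeping.

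First I would establish the correspondence of the internal primitive. For $A := \nabla f(X^{(k)})$, the linearized subproblem over the spectahedron is $\min_{Y \in \Spectahedron} Y \bullet A$. By Lemma~\ref{lem:minAtVtxOfSpect} applied to $-A$, we have $\max_{Y\in\Spectahedron}(-A)\bullet Y = \lmax(-A)$, hence $\min_{Y\in\Spectahedron} Y\bullet A = -\lmax(-A) = \lmin(A)$, with the minimum attained at a rank-one matrix $vv^T$ for $v$ a smallest eigenvector of $A$. Since $\textsc{ApproxEV}(A,\varepsilon')$ returns a unit vector $v$ with $v^T A v \le \lmin(A)+\varepsilon'$, and since $vv^T \bullet A = v^T A v$, the returned point $s = vv^T \in \Spectahedron$ satisfies $s \bullet A \le \min_{Y\in\Spectahedron} Y\bullet A + \varepsilon'$, which is exactly the defining property of $\approxLin(A,\Spectahedron,\varepsilon')$. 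With the choice $\varepsilon' = \alpha C_f$ made in Algorithm~\ref{alg:greedyHazan}, we conclude that it is precisely the approximate variant of Algorithm~\ref{alg:greedyConvex} run on $D = \Spectahedron$, and the duality gap is computed via~(\ref{eq:gapTrace}).

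Given this identification, the primal bound $f(X^{(k)}) - f(X^*) \le \frac{8C_f}{k+2}$ is the immediate statement of the approximate-variant part of Theorem~\ref{thm:primalGreedy}, with $C_f$ now taken with respect to $\Spectahedron$ as in~(\ref{eq:CfTrace}). Likewise, the guaranteed duality-gap bound follows from Theorem~\ref{thm:primalDualGreedy} with $K := \lceil \frac{8C_f}{\varepsilon}\rceil$ (the approximate-variant setting): running $K$ ordinary iterations followed by $K+1$ fixed-step-size iterations produces some iterate $X^{(\hat k)}$, $K \le \hat k \le 2K+1$, with $g(X^{(\hat k)}) \le \varepsilon$. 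The total iteration count is $2K+1 = 2\lceil \frac{8C_f}{\varepsilon}\rceil + 1 = O(\frac1\varepsilon)$.

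Finally I would bound the rank. The starting point $X^{(0)} = vv^T$ has rank one, and each update $X^{(k+1)} = (1-\alpha)X^{(k)} + \alpha\, vv^T$ is a convex combination adding at most one new rank-one term, so $\rk(X^{(k)}) \le k+1$ by induction; hence the exhibited iterate has rank $O(\frac1\varepsilon)$. The main (and essentially only) obstacle is the primitive-correspondence step, and even that is light since Lemma~\ref{lem:minAtVtxOfSpect} does the heavy lifting. I would simply take care with the sign convention (smallest versus largest eigenvalue) when matching $\min_Y Y\bullet A$ to $\textsc{ApproxEV}$, and with invoking the $8C_f$ / $K = \lceil 8C_f/\varepsilon\rceil$ approximate-variant versions of the two theorems rather than their exact-variant counterparts.
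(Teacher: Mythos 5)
Your proposal is correct and follows exactly the paper's route: the paper likewise proves Theorem~\ref{thm:algTrace} as a direct corollary of Theorems~\ref{thm:primalGreedy} and~\ref{thm:primalDualGreedy}, with the identification of $\textsc{ApproxEV}$ as the $\approxLin$ primitive justified via Lemma~\ref{lem:minAtVtxOfSpect} and the rank bound coming from the same one-new-rank-one-term-per-iteration observation. Your write-up merely spells out these ingredients (sign convention, the $8C_f$ approximate-variant constants, the induction on rank) in more detail than the paper's terse one-line proof.
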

\begin{proof}
This is a corollary of Theorem~\ref{thm:primalGreedy} and Theorem~\ref{thm:primalDualGreedy}.
\end{proof}

\paragraph{Approximating the Largest Eigenvector.}
Approximating the smallest eigenvector of a symmetric matrix $\nabla f(X)$ (which is the largest eigenvector of $-\nabla f(X)$) is a well-studied problem in the literature.
We will see in the following that the internal procedure $\mbox{ApproxEV}(M, \varepsilon')$, can be performed in \emph{near-linear} time, when measured in the number of non-zero entries of the gradient matrix $\nabla f(X)$. This will follow from the analysis of~\cite{Kuczynski:1992va} for the power method or Lanczos' algorithm, both with a random start vector. A similar statement has been used in~\cite[Lemma 2]{Arora:2005uh}.

\begin{theorem}\label{thm:powerLanczos}
Let $M\in \Sym^{n\times n}$ be a positive semidefinite matrix. Then with high probability, both
  \begin{itemize}
    \item[i)] $O\left(\frac{\log(n)}{\gamma}\right)$ iterations of the power method, or  
    \item[ii)] $O\left(\frac{\log(n)}{\sqrt{\gamma}}\right)$ iterations of Lanczos' algorithm
  \end{itemize}
will produce a unit vector $x$ such that $\frac{x^TMx}{\lambda_1(M)} \ge 1-\gamma$.
\end{theorem}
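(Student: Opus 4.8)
The plan is to follow the classical gap-free analysis of Kuczyński and Woźniakowski~\cite{Kuczynski:1992va}. Write the eigendecomposition $M = \sum_{i=1}^n \lambda_i u_i u_i^T$ with $\lambda_1 \ge \cdots \ge \lambda_n \ge 0$ (using the PSD hypothesis, which guarantees the largest eigenvalue is the one of largest magnitude, so that the power method actually converges to $u_1$), and expand the random unit start vector $b = \sum_i c_i u_i$ in the eigenbasis. The first step is probabilistic: for $b$ drawn uniformly from the unit sphere, the squared overlap $c_1^2 = \langle b, u_1\rangle^2$ behaves like a single coordinate of a random unit vector, so $\Pr[c_1^2 \le \delta]$ is of order $\sqrt{n\delta}$, whence $c_1^2 \ge 1/\mathrm{poly}(n)$ holds with high probability. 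This is the only place randomness enters, and it is responsible for the $\log n$ factor in both bounds.

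For part (i), the power method produces the normalization $x$ of $M^q b$, whose Rayleigh quotient is $\frac{x^T M x}{\lambda_1} = \frac{\sum_i r_i^{2q+1} c_i^2}{\sum_i r_i^{2q} c_i^2}$ with $r_i := \lambda_i/\lambda_1 \in [0,1]$. I would bound the complementary quantity $1 - \frac{x^T M x}{\lambda_1} = \frac{\sum_i r_i^{2q}(1-r_i)c_i^2}{\sum_i r_i^{2q}c_i^2}$ by splitting the indices into a ``good'' set with $r_i \ge 1-\gamma$ and a ``bad'' set with $r_i < 1-\gamma$. Good indices contribute at most $\gamma$ times the denominator, bad indices contribute at most $(1-\gamma)^{2q}$, and the denominator is at least $c_1^2$, giving $1 - \frac{x^T M x}{\lambda_1} \le \gamma + (1-\gamma)^{2q}/c_1^2$. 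Demanding the second term be $\le \gamma$ forces $q = O\!\big(\frac{\log(1/(\gamma c_1^2))}{\gamma}\big) = O\!\big(\frac{\log n}{\gamma}\big)$ after inserting the high-probability bound on $c_1^2$ and rescaling $\gamma$ by a constant. No spectral gap is assumed, which is exactly why the good/bad split is needed rather than a naive two-eigenvalue argument.

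For part (ii), I would use that after $q$ iterations Lanczos maximizes the Rayleigh quotient over the entire Krylov space $\mathcal{K}_q = \mathrm{span}\{b, Mb, \dots, M^{q-1}b\}$, so for \emph{any} polynomial $p$ of degree $\le q-1$ the vector $p(M)b$ yields a lower bound $\theta_q \ge \frac{\sum_i \lambda_i p(\lambda_i)^2 c_i^2}{\sum_i p(\lambda_i)^2 c_i^2}$. Choosing $p$ to be the degree-$(q-1)$ Chebyshev polynomial affinely mapped so that the ``bad'' interval $[0,(1-\gamma)\lambda_1]$ is sent to $[-1,1]$ while $\lambda_1$ lands just outside at $1+\eta$ with $\eta \approx 2\gamma$, one obtains $p(\lambda_1)^2 / \max_{\lambda \le (1-\gamma)\lambda_1} p(\lambda)^2 \ge \exp(\Omega(q\sqrt\gamma))$. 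Repeating the good/bad estimate of part (i) with this amplification replaces the factor $(1-\gamma)^{2q}$ by $\exp(-\Omega(q\sqrt\gamma))$, and solving for $q$ yields $q = O\!\big(\frac{\log n}{\sqrt\gamma}\big)$.

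The main obstacle is the Chebyshev estimate in part (ii): quantifying how strongly a scaled Chebyshev polynomial amplifies the value at $\lambda_1$ relative to its supremum over the bad spectral interval is what produces the quadratic ($\sqrt\gamma$ versus $\gamma$) speedup, and it rests on the standard but delicate growth bound $T_{q-1}(1+\eta) \ge \tfrac12 \exp((q-1)\sqrt{2\eta})$ for the Chebyshev polynomial slightly outside $[-1,1]$. Everything else — the eigenbasis bookkeeping and the probabilistic lower bound on $c_1^2$ — is routine.
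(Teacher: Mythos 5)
Your proposal is correct in substance, but it takes a different route from the paper in the sense that the paper does not prove this theorem at all: its ``proof'' is a one-line citation to Theorems 3.1(a) and 3.2(a) of Kuczy\'nski and Wo\'zniakowski~\cite{Kuczynski:1992va}, which are exactly the gap-free power-method and Lanczos bounds with random start. What you have written is essentially a reconstruction of the argument behind those cited theorems: the eigenbasis expansion with the good/bad split at $r_i = 1-\gamma$ (rather than a spectral-gap argument, which would be useless here), the anti-concentration bound $\Pr[c_1^2 \le \delta] = O(\sqrt{n\delta})$ for a uniform start as the sole source of the $\log n$, and for Lanczos the Krylov-space optimality $\theta_q \ge \frac{\sum_i \lambda_i p(\lambda_i)^2 c_i^2}{\sum_i p(\lambda_i)^2 c_i^2}$ combined with a shifted Chebyshev polynomial to replace $(1-\gamma)^{2q}$ by $e^{-\Omega(q\sqrt{\gamma})}$. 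Your observation that positive semidefiniteness is what makes $\lambda_1$ the dominant eigenvalue in magnitude also correctly explains why the paper insists on the PSD hypothesis (and why it later shifts the spectrum of $-\nabla f(X)$ before calling \textsc{ApproxEV}). What your self-contained argument buys is transparency about where each factor comes from; what the citation buys the paper is sharper constants, since Kuczy\'nski--Wo\'zniakowski integrate over the sphere directly rather than conditioning on $c_1^2$.

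Two small caveats, neither of which damages the $O(\cdot)$ conclusions. First, your ``delicate growth bound'' $T_{q-1}(1+\eta) \ge \tfrac12 \exp((q-1)\sqrt{2\eta})$ is stated with the wrong constant in the exponent: since $\cosh t \ge 1+t^2/2$, one has $\operatorname{arccosh}(1+\eta) \le \sqrt{2\eta}$, so $\tfrac12 e^{m\sqrt{2\eta}}$ is (up to the factor $2$) an \emph{upper} bound on $T_m(1+\eta)$, and your claimed lower bound fails for large $m$ at fixed small $\eta$. The correct cheap lower bound is $T_m(1+\eta) \ge \tfrac12\bigl(1+\eta+\sqrt{2\eta+\eta^2}\,\bigr)^m \ge \tfrac12(1+\sqrt{2\eta})^m \ge \tfrac12 e^{c\,m\sqrt{\eta}}$ for a universal constant $c>0$, which is all the $\exp(\Omega(q\sqrt{\gamma}))$ amplification needs. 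Second, your iteration count honestly comes out as $O\bigl((\log n + \log(1/\gamma))/\gamma\bigr)$ (resp.\ with $\sqrt{\gamma}$ in the denominator), and the $\log(1/\gamma)$ term is not removed by ``rescaling $\gamma$ by a constant''; it is absorbed into $O(\log n/\gamma)$ only for $\gamma \ge n^{-O(1)}$, which is the regime in which the theorem, as stated in the paper and in~\cite{Kuczynski:1992va}, is meant to be read.
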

\begin{proof}
The statement for the power method follows from~\cite[Theorem 3.1(a)]{Kuczynski:1992va}, and for Lanczos' algorithm by~\cite[Theorem 3.2(a)]{Kuczynski:1992va}.
\end{proof}

The only remaining obstacle to use this result for our internal procedure $\mbox{ApproxEV}(M, \varepsilon')$ is that our gradient matrix $M=-\nabla f(X)$ is usually not PSD. However, this can easily be fixed by adding a large enough constant $t$ to the diagonal, i.e. $\hat M := M + t\id$, or in other words shifting the spectrum of $M$ so that the eigenvalues satisfy $\lambda_i(\hat M) = \lambda_i(M) + t \ge 0$ $\forall i$. The choice of $t= - \lmin(M)$ is good enough for this to hold.

Now by setting $\gamma:= \frac{\varepsilon'}{L} \le\frac{\varepsilon'}{\lmax(\hat M)}$ for some upper bound $L \ge \lmax(\hat M)=\lmax(M)-\lmin(M)$, this implies that our internal procedure $\mbox{ApproxEV}(M, \varepsilon')$ can be implemented by performing $O\left(\frac{\log(n)\sqrt{L}}{\sqrt{\varepsilon'}}\right)$ many Lanczos steps (that is matrix-vector multiplications). 
Note that a simple choice for $L$ is given by the spectral norm of $M$, since $2\spectnorm{M} = 2\max_i \lambda_i(M) \ge \lmax(M)-\lmin(M)$.
We state the implication for our algorithm in the following corollary.

\begin{theorem}
For $M\in\Sym^{n\times n}$, and $\varepsilon'>0$, the procedure $\mbox{ApproxEV}(M, \varepsilon')$ requires a total of $O\left(N_f\frac{\log(n)\sqrt{L}}{\sqrt{\varepsilon'}}\right)%
$ many arithmetic operations, with high probability, by using Lanczos' algorithm.
\end{theorem}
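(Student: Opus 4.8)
The plan is to treat this statement as a direct cost accounting on top of Theorem~\ref{thm:powerLanczos}: the total work equals the number of Lanczos iterations multiplied by the cost of a single iteration. First I would pin down the number of iterations. Since the gradient $\nabla f(X)$ need not be PSD, I would work with the shifted matrix $\hat M := M + t\id$, where $M = -\nabla f(X)$ and $t$ is chosen so that $\hat M \succeq 0$. The concrete choice $t = \spectnorm{M}$ works, because every eigenvalue of $M$ is at least $-\spectnorm{M}$, and it conveniently yields $\lmax(\hat M) = \lmax(M) + \spectnorm{M} \le 2\spectnorm{M} =: L$, so that no separate eigenvalue estimation is needed. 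Applying Theorem~\ref{thm:powerLanczos}(ii) to the PSD matrix $\hat M$ with $\gamma := \varepsilon'/L$ then gives, with high probability, a unit vector $x$ with $x^T\hat M x \ge (1-\gamma)\lmax(\hat M)$ after $O\!\big(\log(n)/\sqrt{\gamma}\big) = O\!\big(\log(n)\sqrt{L}/\sqrt{\varepsilon'}\big)$ iterations.

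The conceptual step I would spell out carefully is that this \emph{relative} guarantee on $\hat M$ is exactly the \emph{additive} guarantee required by $\mbox{ApproxEV}$. Since $\hat M$ and $M$ share eigenvectors and differ only by the scalar shift $t$, for any unit vector $x$ we have $x^T\hat M x = x^T M x + t$ and $\lmax(\hat M) = \lmax(M)+t$. Hence $\lmax(M) - x^T M x = \lmax(\hat M) - x^T\hat M x \le \gamma\,\lmax(\hat M) \le \gamma L = \varepsilon'$, using $\lmax(\hat M)\le L$ and $\gamma = \varepsilon'/L$. Recalling that finding the smallest eigenvector of $\nabla f(X)$ is the same as finding the largest eigenvector of $M = -\nabla f(X)$, this reads $x^T\nabla f(X)\,x \le \lmin(\nabla f(X)) + \varepsilon'$, which is precisely the accuracy demanded of $\mbox{ApproxEV}(\nabla f(X),\varepsilon')$.

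Finally I would account for the per-iteration cost. Each Lanczos step is dominated by one matrix-vector product with $\hat M$, namely $\hat M x = Mx + tx$: the product $Mx$ costs $O(N_f)$ operations, where $N_f$ denotes the number of non-zero entries of $M = -\nabla f(X)$ (equivalently, the cost of one matrix-vector product with it), and the shift $tx$ adds only $O(n)$, which is absorbed into $O(N_f)$. Multiplying the $O\!\big(\log(n)\sqrt{L}/\sqrt{\varepsilon'}\big)$ iterations by the $O(N_f)$ cost per iteration gives the claimed $O\!\big(N_f\,\log(n)\sqrt{L}/\sqrt{\varepsilon'}\big)$ total, and the failure probability is inherited verbatim from Theorem~\ref{thm:powerLanczos}. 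The only genuinely substantive point, and the one I would treat as the main obstacle, is the translation in the second paragraph from the multiplicative eigenvalue guarantee to the additive one through the shift-and-scale choice $\gamma=\varepsilon'/L$; everything else is bookkeeping.
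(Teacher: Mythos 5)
Your proof is correct and takes essentially the same route as the paper: shift the spectrum to make the matrix PSD, invoke Theorem~\ref{thm:powerLanczos}(ii) with $\gamma = \varepsilon'/L$, and multiply the resulting iteration count by the $O(N_f)$ cost of one sparse matrix-vector product. Your only deviation is cosmetic---shifting by $t=\spectnorm{M}$ instead of $t=-\lmin(M)$, which matches the paper's own remark that $L=2\spectnorm{M}$ is a valid bound---and your explicit check that the multiplicative guarantee for $\hat M$ yields the additive guarantee $x^T M x \ge \lmax(M) - \varepsilon'$ is precisely the step the paper leaves implicit.
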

Here $N_f$ is the number of non-zero entries in $M$, which in the setting of Algorithm~\ref{alg:greedyHazan} is the gradient matrix $-\nabla f(X)$. %
We have also assumed that the spectral norm of $M$ is bounded by $L$. %

Since we already know the number of necessary ``outer'' iterations of Algorithm~\ref{alg:greedyHazan}, by Theorem~\ref{thm:algTrace}, we conclude with the following analysis of the total running time. Here we again use that the required internal accuracy is given by $\varepsilon' = \alpha C_f \le \varepsilon C_f$.

\begin{corollary}\label{cor:algHazanRunningTime}
When using Lanczos' algorithm for the approximate eigenvector procedure $\mbox{ApproxEV}(.,.)$, then Algorithm~\ref{alg:greedyHazan} 
provides an $\varepsilon$-approximate solution in $O\left(\frac{1}{\varepsilon}\right)$ iterations, requiring a total of $\tilde
  O\left(\frac{N_f}{\varepsilon^{1.5}}\right)$ 
  arithmetic operations (with high probability).
\end{corollary}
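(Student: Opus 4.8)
The plan is to combine two facts already in hand: the bound on the number of outer iterations from Theorem~\ref{thm:algTrace}, and the per-call cost of the approximate eigenvector routine stated just above (which itself rests on Theorem~\ref{thm:powerLanczos}). Theorem~\ref{thm:algTrace} gives that $K = 2\lceil 8C_f/\varepsilon\rceil + 1 = O(1/\varepsilon)$ iterations of Algorithm~\ref{alg:greedyHazan} suffice to reach an iterate of duality gap at most $\varepsilon$, so everything reduces to accounting for the work performed inside each iteration and summing it up.

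First I would record the internal accuracy demanded in iteration $k$. By construction the algorithm calls $\mbox{ApproxEV}\big(\nabla f(X^{(k)}),\alpha^{(k)} C_f\big)$ with $\alpha^{(k)} = \frac{2}{k+2}$, so the target additive accuracy is $\varepsilon'_k = \alpha^{(k)} C_f = \frac{2C_f}{k+2}$. To apply the cost bound, which requires a PSD matrix for Theorem~\ref{thm:powerLanczos}, one first shifts the generally indefinite matrix $M = -\nabla f(X^{(k)})$ to $\hat M = M + t\id$ with $t = -\lmin(M)$, leaving the eigenvectors unchanged, and sets $\gamma = \varepsilon'_k/L$ for a spectral bound $L \ge \spectnorm{\hat M}$. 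The cost bound then states that the $k$-th call costs $O\!\left(N_f \frac{\log(n)\sqrt{L}}{\sqrt{\varepsilon'_k}}\right)$ arithmetic operations.

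The combination step is a direct summation. The per-call cost is largest when $\varepsilon'_k$ is smallest, namely in the final iterations where $k = \Theta(K) = \Theta(1/\varepsilon)$ and hence $\varepsilon'_k = \Theta(\varepsilon)$, so each such call costs $O\!\left(N_f \log(n)\sqrt{L}/\sqrt{\varepsilon}\right)$. Multiplying this worst-case per-iteration cost by the $K = O(1/\varepsilon)$ iterations yields
\[
O\!\left(\frac{1}{\varepsilon}\cdot N_f \frac{\log(n)\sqrt{L}}{\sqrt{\varepsilon}}\right) = O\!\left(N_f \frac{\log(n)\sqrt{L}}{\varepsilon^{1.5}}\right) = \tilde O\!\left(\frac{N_f}{\varepsilon^{1.5}}\right),
\]
where the $\tilde O$ absorbs the $\log(n)$ and $\sqrt{L}$ factors (treating $C_f$ and $L$ as constants). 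The identical bound follows more sharply by summing the actual per-call costs, using $\sum_{k=0}^{K-1}\sqrt{k+2} = O(K^{3/2}) = O(\varepsilon^{-3/2})$, confirming that the crude worst-case multiplication loses nothing beyond constants.

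The main obstacle I anticipate is not the arithmetic but the phrase ``with high probability'': Theorem~\ref{thm:powerLanczos} guarantees a single eigenvector computation only with high probability, whereas we chain $O(1/\varepsilon)$ such calls and need all of them to succeed simultaneously. A clean remedy is to boost each call by running $O(\log\frac{1}{\varepsilon})$ independent repetitions and keeping the best Rayleigh quotient, then take a union bound over the $O(1/\varepsilon)$ iterations; this keeps the total failure probability small at the cost of one further logarithmic factor, again hidden inside the $\tilde O$. One should additionally verify that a uniform spectral bound $L$ holds across all iterations, for instance via a bound on $\spectnorm{\nabla f(X)}$ over the compact domain $\Spectahedron$, so that the choice $\gamma = \varepsilon'_k/L$ is legitimate at every step.
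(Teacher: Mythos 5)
Your proposal is correct and follows essentially the same route as the paper: it combines the $O(1/\varepsilon)$ outer-iteration bound of Theorem~\ref{thm:algTrace} with the per-call Lanczos cost $O\bigl(N_f\log(n)\sqrt{L}/\sqrt{\varepsilon'}\bigr)$, using that the required internal accuracy $\varepsilon' = \alpha^{(k)} C_f$ is $\Theta(\varepsilon C_f)$ in the worst (late) iterations, so the product is $\tilde O(N_f/\varepsilon^{1.5})$. Your additional observations --- the sharper summation $\sum_k \sqrt{k+2} = O(K^{3/2})$ and the union bound with boosting to chain the high-probability guarantees across all $O(1/\varepsilon)$ calls --- are points the paper leaves implicit, and they only strengthen the argument without changing its structure.
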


Here the notation $\tilde O(.)$ suppresses the logarithmic factor in $n$.
This corollary improves the original analysis of~\cite{Hazan:2008kz} by a factor of~$\frac1{\sqrt{\varepsilon}}$, since~\cite[Algorithm 1]{Hazan:2008kz} as well as the proof of~\cite[Theorem 1]{Hazan:2008kz} used an internal accuracy bound of $\varepsilon' = O\big(\frac1{k^2}\big)$ instead of the sufficient choice of $\varepsilon' = O\big(\frac1k\big)$ as in our general analysis here.

\paragraph{Representation of the Estimate $X$ in the Algorithm.}
The above result on the total running time assumes the following: After having obtained an approximate eigenvector $v$, the rank-1 update $X^{(k+1)}:=(1-\alpha)X^{(k)}+\alpha vv^T$ can be performed efficiently, or more precisely in time $N_f$. In the worst case, when a fully dense matrix $X$ is needed, this update cost is $N_f = n^2$. However, there are many interesting applications where the function~$f$ depends only on a small fraction of the entries of~$X$, so that $N_f \ll n^2$. Here, a prominent example is matrix completion for recommender systems. In this case, only those~$N_f$ many entries of~$X$ will be stored and affected by the rank-1 update, see also our Section~\ref{sec:applicationMatCompl}.

An alternative representation of $X$ consists of the low-rank factorization, given by the $v$-vectors of each of the $O\big(\frac1\varepsilon\big)$ many update steps, using a smaller memory of size $O\big(\frac n\varepsilon\big)$. However, computing the gradient $\nabla f(X)$ from this representation of $X$ might require more time then.

\subsection{Solving Arbitrary SDPs}\label{subsec:arbitrSDP}

In~\cite{Hazan:2008kz} it was established that Algorithm~\ref{alg:greedyHazan} can also be used to approximately solve arbitrary semidefinite programs (SDPs) in feasibility form, i.e.,
\begin{equation}\label{eq:feasSDP}
\begin{array}{rl}
\text{find } X \ \st & A_i \bullet X \le b_i  ~~~~i=1..m \\
    & X \succeq 0  \ .
\end{array}
\end{equation}
Also every classical SDP with a linear objective function 
\begin{equation}\label{eq:SDP}
\begin{array}{rl}
\displaystyle\maxi_X& C \bullet X \\
\st & A_i \bullet X \le b_i  ~~~~i=1..m' \\
    & X \succeq 0  \ .
\end{array}
\end{equation}
can be turned into a feasibility SDP~(\ref{eq:feasSDP}) by ``guessing'' the optimal value $C\bullet X$ by binary search~\cite{Arora:2005uh,Hazan:2008kz}.

Here we will therefore assume that we are given a feasibility SDP of the form ~(\ref{eq:feasSDP}) by its constraints $A_i \bullet X \le b_i$, which we want to solve for $X$. We can represent the constraints of~(\ref{eq:feasSDP}) in a smooth optimization objective instead, using the \emph{soft-max} function
\begin{equation}\label{eq:softMax}
f(X) := \frac1\sigma \log \left( \sum_{i=1}^{m} e^{\sigma(A_i \bullet X - b_i)} \right) \ .
\end{equation}

Suppose that the original SDP was feasible, then after $O\left(\frac{1}{\varepsilon}\right)$ many iterations of Algorithm~\ref{alg:greedyHazan}, for a suitable choice of $\sigma$, we have obtained $X$ such that $f(X) \le \varepsilon$, which implies that all constraints are violated by at most $\varepsilon$. This means that $A_i \bullet X \le b_i + \varepsilon$, or in other words we say that $X$ is $\varepsilon$-feasible~\cite{Hazan:2008kz,Gartner:2011tl}. It turns out the best choice for the parameter $\sigma$ is $\frac{\log m}{\varepsilon}$, and the curvature constant $C_f(\sigma)$ for this function is bounded by $\sigma \cdot \max_i \lmax(A_i)^2$. The total number of necessary approximate eigenvector computations is therefore in $O\left(\frac{\log m}{\varepsilon^2}\right)$.
In fact, Algorithm~\ref{alg:greedyHazan} when applied to the function~(\ref{eq:softMax}) is very similar to the multiplicative weights method~\cite{Arora:2005uh}. Note that the soft-max function~(\ref{eq:softMax}) is convex in $X$, see also~\cite{Rennie:2005ww}. 
For a slightly more detailed exhibition of this approach of using Algorithm~\ref{alg:greedyHazan} to approximately solving SDPs, we refer the reader to the book of~\cite{Gartner:2011tl}.

Note that this technique of introducing the soft-max function is closely related to smoothing techniques in the optimization literature~\cite{Nemirovski:2004hr,Baes:2009vh}, where the soft-max function is introduced to get a smooth approximation to the largest eigenvalue function. The transformation to a smooth saddle-point problem suggested by~\cite{Baes:2009vh} is more complicated than the simple notion of $\varepsilon$-feasibility suggested here, and will lead to a comparable computational complexity in total.
\subsection{Two Improved Variants of Algorithm~\ref{alg:greedyHazan}}\label{subsec:imprAlgos}

\paragraph{Choosing the Optimal $\alpha$ by Line-Search.}
As we mentioned already for the general algorithm for convex optimization in Section~\ref{sec:optGenConvex}, the optimal $\alpha$ in Algorithm~\ref{alg:greedyHazan}, i.e. the $\alpha \in [0,1]$ of best improvement in the objective function $f$ can be found by line-search.

In particular for matrix completion problems, which we will discuss in more details in Section~\ref{sec:applicationMatCompl}, the widely used squared error is easy to analyze in this respect: If the optimization function is given by $f(X) = \frac{1}{2} \sum_{ij \in P} (X_{ij} - Y_{ij})^{2}$, where $P$ is the set of observed positions of the matrix $Y$, then the optimality condition~(\ref{eq:bestOnLineSegment}) from Section~\ref{subsec:lineSearch} is equivalent to
\begin{equation}
\alpha = \frac{ \sum_{ij \in P} (X_{ij} - y_{ij})(X_{ij} -  v_{i} v_{j}) }{\sum_{ij \in P} (X_{ij} -  v_{i} v_{j})^{2}   } \ .
\end{equation}
Here $X=X^{(k)}$, and $v$ is the approximate eigenvector $v^{(k)}$ used in step $k$ of Algorithm~\ref{alg:greedyHazan}. The above expression is computable very efficiently compared to the eigenvector approximation task.

\paragraph{Immediate Feedback in the Power Method.}
As a second improvement, we propose a heuristic to speed up the eigenvector computation, i.e. the internal procedure $\textsc{ApproxEV}\left(\nabla f(X),\varepsilon'\right)$. 
Instead of multiplying the current candidate vector $v_k$ with the gradient matrix $\nabla f(X)$ in each power iteration, we multiply with $\frac{1}{2}\left( \nabla f(X) + \nabla f(\overline X) \right)$, 
or in other words the average between the current gradient and the gradient at the new candidate location $\overline X = \big(1-\frac1k\big)X^{(k)}+\frac{1}{k} v^{(k)} {v^{(k)}}^T$. Therefore, we immediately take into account the effect of the new feature vector $v^{(k)}$. This heuristic (which unfortunately does not fall into our current theoretical guarantee) is inspired by stochastic gradient descent as in Simon Funk's method, which we will describe in Section~\ref{sec:simonFunk}. In practical experiments, this proposed slight modification will result in a significant speed-up of Algorithm~\ref{alg:greedyHazan}, as we will observe e.g. for matrix completion problems in Section~\ref{sec:applicationMatCompl}.

\subsection{$\Omega(\frac1\varepsilon)$ Lower Bound on the Rank}\label{sec:matLowerBound}

Analogous to the vector case discussed in Section~\ref{subsec:vecLowerBound}, we can also show that the rank of $O\big(\frac1\varepsilon\big)$, as obtained by the greedy Algorithm~\ref{alg:greedyHazan} is indeed optimal, by providing a lower bound of $\Omega\big(\frac1\varepsilon\big)$. In other words we can now exactly characterize the trade-off between rank and approximation quality, for convex optimization over the spectahedron.

For the lower bound construction, we consider the convex function $f(X) := \frobnorm{X}^{2} = X\bullet X$ over the symmetric matrices $\Sym^{n \times n}$. 
This function has gradient $\nabla f(X) = 2X$. We will later see that its curvature constant is $C_f = 2$.

The following lemmata show that the above sparse SDP Algorithm~\ref{alg:greedyHazan} is optimal for the approximation quality (primal as well as dual error respectively), giving lowest possible rank, up to a small multiplicative constant.

\begin{lemma}\label{lem:matPrimalLB}
For $f(X) := \frobnorm{X}^{2}$, and $1 \leq k \leq n$, it holds that
$$\min_{\substack{X \in \Spectahedron \\ \rk(X)\le k}} f(X) = \frac1k.$$
\end{lemma}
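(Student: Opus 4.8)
The plan is to reduce this matrix statement to the already-established vector Lemma~\ref{lem:vecPrimalLB} by passing to the eigenvalues of $X$. The key observation is that $f(X) = \frobnorm{X}^2 = X \bullet X$ is a \emph{spectral} function: since every $X \in \Spectahedron$ is symmetric and PSD, it admits an orthogonal diagonalization $X = U \Lambda U^T$ with $U$ orthogonal and $\Lambda = \diag(\lambda_1,\dots,\lambda_n)$, where the eigenvalues satisfy $\lambda_i \ge 0$ (by $X \succeq 0$) and $\sum_i \lambda_i = \tr(X) = 1$. The rank constraint $\rk(X) \le k$ is exactly the statement that at most $k$ of the $\lambda_i$ are nonzero, i.e. $\card(\lambda) \le k$ for the eigenvalue vector $\lambda \in \R^n$, which in particular lies in the simplex $\Delta_n$.

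First I would use the orthogonal invariance of the Frobenius norm (equivalently, that $X \bullet X = \tr(X^2)$ and the eigenvalues of $X^2$ are the $\lambda_i^2$) to rewrite the objective purely in terms of the spectrum:
\[
f(X) = \frobnorm{X}^2 = \sum_{i=1}^n \lambda_i^2 = \norm{\lambda}_2^2 \ .
\]
At this point the feasible set for $\lambda$ is precisely $\SetOf{\lambda \in \Delta_n}{\card(\lambda) \le k}$, so minimizing $f(X)$ over rank-$\le k$ spectahedron matrices becomes identical to minimizing $\norm{\lambda}_2^2$ over the simplex subject to sparsity $\le k$. Applying Lemma~\ref{lem:vecPrimalLB} to the vector $\lambda$ then yields the lower bound $f(X) \ge \frac1k$.

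To see that the value $\frac1k$ is attained, I would exhibit an explicit minimizer: choosing any $k$ orthonormal vectors $u_1,\dots,u_k \in \R^n$ and setting $X := \frac1k \sum_{i=1}^k u_i u_i^T$ gives a matrix in $\Spectahedron$ (it is PSD with $\tr(X) = \frac1k \cdot k = 1$) of rank exactly $k$, whose eigenvalues are $\frac1k$ with multiplicity $k$ together with $0$, so that $f(X) = \sum_{i=1}^k \frac1{k^2} = \frac1k$. This matches the lower bound and proves equality.

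There is essentially no serious obstacle here; the whole content is the reduction to the spectrum. The one point requiring a small amount of care is justifying $\frobnorm{X}^2 = \sum_i \lambda_i^2$ for symmetric $X$ --- this follows either from invariance of the Frobenius inner product under the orthogonal change of basis $U$, or directly from $X \bullet X = \tr(X^2)$ together with the fact that $X^2$ has eigenvalues $\lambda_i^2$. Once the objective is expressed spectrally, the matrix lemma is exactly the vector lemma applied to the probability vector of eigenvalues, and the attaining example transfers verbatim from the tight case of Lemma~\ref{lem:vecPrimalLB}.
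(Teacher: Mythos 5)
Your proposal is correct and follows essentially the same route as the paper's own proof: diagonalize $X$ via the spectral theorem, use orthogonal invariance of $\frobnorm{.}^2$ to reduce to Lemma~\ref{lem:vecPrimalLB} applied to the eigenvalue vector in $\Delta_n$ with $\card(\lambda)\le k$, and attain the bound with the uniform rank-$k$ example $\frac1k\sum_{i=1}^k u_iu_i^T$. Your write-up is slightly more explicit about why the eigenvalue vector lies in the simplex, but the argument is the same.
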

We will see that this claim can be reduced to the analogous Lemma~\ref{lem:vecPrimalLB} for the vector case, by the standard technique of diagonalizing a symmetric matrix. (This idea was suggested by Elad Hazan).
Alternatively, an explicit (but slightly longer) proof without requiring the spectral theorem can be obtained by using the Cholesky-decomposition together with induction on $k$.
\begin{proof}
We observe that the objective function $\frobnorm{.}^{2}$, the trace, as well as the property of being positive semidefinite, are all invariant under orthogonal transformations (or in other words under the choice of basis).

By the standard spectral theorem, for any symmetric matrix $X$ of $\rk(X)\le k$, there exists an orthogonal transformation mapping $X$ to a \emph{diagonal} matrix $X'$ with at most $k$ non-zero entries on the diagonal (being eigenvalues of $X$ by the way). For diagonal matrices, the $\frobnorm{.}$ matrix norm coincides with the $\norm{.}_2$ vector norm of the diagonal of the matrix.
Finally by applying the vector case Lemma~\ref{lem:vecPrimalLB} for the diagonal of $X'$, we obtain that $f(X)=f(X') \ge \frac1k$.

To see that the minimum can indeed be attained, one again chooses the ``uniform'' example $X:=\frac1k \id_k \in \Spectahedron$, being the matrix consisting of $k$ non-zero entries (of $\frac1k$ each) on the diagonal. This gives $f(X)=\frac1k$.
\end{proof}

Recall from Section~\ref{subsec:algHazan} that for convex problems of the form~(\ref{eq:optTrEq1}) over the Spectahedron, the \emph{duality gap} is the non-negative value $g(X) := f(X) - \omega(X) =  X\bullet \nabla f(X)  - \lambda_{\min}(\nabla f(X))$. Also, by weak duality as given in Lemma~\ref{lem:weakDual}, this value is always an upper bound for the primal error, that is $f(X)-f(X^*) \le g(X) ~\forall X$.

\begin{lemma}
For $f(X) := \frobnorm{X}^{2}$, and any $k \in \N$, $k < n$, it holds that
$$g(X) \ge \frac1k \ \ \ \ \ \ \ \ \  \forall X \in \Spectahedron \text{ s.t. } \rk(X)\le k.$$
\end{lemma}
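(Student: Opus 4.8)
The plan is to read off the duality gap directly from the closed form~(\ref{eq:gapTrace}) that was established for the spectahedron, and then reduce everything to the primal lower bound of Lemma~\ref{lem:matPrimalLB}, mirroring the vector argument from Section~\ref{subsec:vecLowerBound}.

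First I would substitute the gradient $\nabla f(X) = 2X$ into the gap formula $g(X) = X\bullet \nabla f(X) - \lmin(\nabla f(X))$. Since $X\bullet 2X = 2\frobnorm{X}^2 = 2f(X)$ and $\lmin(2X) = 2\lmin(X)$, this yields $g(X) = 2f(X) - 2\lmin(X)$, reducing the claim to controlling $f(X)$ and $\lmin(X)$ separately.

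Next, I would use the rank hypothesis: because $X\in\Spectahedron$ is positive semidefinite and $\rk(X)\le k < n$, at least one eigenvalue must vanish, so $\lmin(X)=0$. Hence $g(X) = 2f(X)$. Applying Lemma~\ref{lem:matPrimalLB}, which gives $f(X)\ge\frac1k$ for every $X\in\Spectahedron$ of rank at most $k$, I obtain $g(X) \ge \frac2k \ge \frac1k$, as claimed. (In fact the sharper bound $g(X)\ge\frac2k$ holds, exactly matching the scaling of the vector case in Section~\ref{subsec:vecLowerBound}.)

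There is essentially no real obstacle here, since the substantive work was already carried out in deriving the gap formula~(\ref{eq:gapTrace}) and the primal bound of Lemma~\ref{lem:matPrimalLB}. The only point requiring a moment's care is the observation $\lmin(X)=0$, which is the matrix analogue of the step $\min_i x_i = 0$ used in the vector lemma, and follows immediately from rank-deficiency combined with $X\succeq 0$.
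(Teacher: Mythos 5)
Your proof is correct and follows essentially the same route as the paper: plug $\nabla f(X)=2X$ into the gap formula~(\ref{eq:gapTrace}), use rank-deficiency plus $X\succeq 0$ to conclude $\lmin(X)=0$, and invoke Lemma~\ref{lem:matPrimalLB}. In fact you track the constant slightly more carefully than the paper's own proof (which writes $-\lambda_{\min}(X)$ where $-2\lambda_{\min}(X)$ would be exact, harmlessly, since that term vanishes), and your sharper bound $g(X)\ge\frac2k$ indeed holds, matching the vector case.
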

\begin{proof}
$g(X) = \lmax(-\nabla f(X)) + X\bullet \nabla f(X)  = - \lambda_{\min}(X) +X\bullet 2X$. We now use that $\lambda_{\min}(X) = 0$ for all symmetric PSD matrices $X$ that are not of full rank $n$, and that by Lemma~\ref{lem:matPrimalLB}, we have $ X\bullet X  = \tr(X^TX) = f(X) \ge \frac1k$.
\end{proof}

\paragraph{The Curvature.}
We will compute the curvature $C_f$ of our function $f(X) := X \bullet X$, showing that $C_f=2$ in this case. Using the definition~(\ref{eq:Cf}), and the fact that here 
\[
\begin{array}{rl}
  & f(Y)-f(X)-(Y-X)\bullet \nabla f(X) \\
=& Y\bullet Y-X\bullet X-(Y-X)\bullet 2X \\
=& \frobnorm{X-Y}^2 \ ,
\end{array}
\]
one obtains that $C_f = \sup_{X,Y\in \Spectahedron}  \frobnorm{X-Y}^2 = \diam_{Fro}(\Spectahedron)^2 = 2$. Finally the following Lemma~\ref{lem:frobDiamSpect} shows that the diameter is indeed $2$. 

\begin{lemma}[Diameter of the Spectahedron]\label{lem:frobDiamSpect}
\[
\diam_{Fro}(\Spectahedron)^2 = 2 \ .
\]
\end{lemma}
\begin{proof}
Using the fact that the spectahedron $\Spectahedron$ is the convex hull of the rank-1 matrices of unit trace, see Lemma~\ref{lem:minAtVtxOfSpect}, we know that the diameter must be attained at two vertices of $\Spectahedron$, i.e. $u,v\in\R^n$ with $\norm{u}_2=\norm{v}_2=1$, and\vspace{-5pt}
\[
\begin{array}{rl}
 &\frobnorm{vv^T-uu^T}^{2} \\[3pt]
=& vv^T \bullet vv^T + uu^T \bullet uu^T -2 vv^T \bullet uu^T \\
=& v^Tvv^Tv + u^Tuu^Tu - 2 u^Tvv^Tu\\
=& \norm{v}^{4} + \norm{u}^{4} - 2(u^Tv)^2 \ .
\end{array}
\]
Clearly, this quantity is maximized if $u$ and $v$ are orthogonal.
\end{proof}

\paragraph{Note:} We could also study $f(X) := \gamma \frobnorm{X}^{2}$ instead, for some $\gamma > 0$. This function has \emph{curvature} constant $C_f = 2\gamma$, and for this scaling our above lower bounds will also just scale linearly, giving $\frac{C_f}{k}$ instead of $\frac1k$.

\section{Semidefinite Optimization with $\ell_\infty$-Bounded Diagonal}\label{sec:algMaxDiag}

Here we specialize our general Algorithm~\ref{alg:greedyConvex} to  semidefinite optimization problems where all diagonal entries are individually constrained. This will result in a new optimization method that can also be applied to max-norm optimization problems, which we will discuss in more detail in Section~\ref{chap:optNucMax}. As in the previous Section~\ref{sec:algTrace}, here we also consider matrix optimization problems over the space $\X = \Sym^{n\times n}$ of symmetric matrices, equipped with the standard Frobenius inner product $\langle X,Y \rangle = X\bullet Y$.

Formally, we consider the following special case of the general optimization problems~(\ref{eq:optGenConvex}), i.e. 
\begin{equation}\label{eq:optMatInfLe1}
\begin{array}{rl}
   \displaystyle\mini_{X \in\Sym^{n\times n}} & f(X)  \\
    s.t. &  X_{ii} \le 1 ~~~\forall i ,\\
         &  X \succeq 0 \ .
\end{array}
\end{equation}

We will write $\MaxCutPolytope := \SetOf{X \in \Sym^{n \times n}}{X \succeq 0, ~ X_{ii} \le 1 ~\forall i}$ for the feasible set in this case, that is the PSD matrices whose diagonal entries are all upper bounded by one. This class of optimization problems has become widely known for the linear objective case when $f(X) = A \bullet X$, if $A$ being the Laplacian matrix of a graph. In this case, one obtains the standard SDP relaxation of the Max-Cut problem~\cite{Goemans:1995ug}, which we will briefly discuss below.
Also, this optimization domain is strongly related to the matrix \emph{max-norm}, which we study in more detail in Section~\ref{sec:maxNorm}. %

Our general optimization Algorithm~\ref{alg:greedyConvex} directly applies to this specialized class of optimization problems as well, in which case it becomes the method depicted in the following Algorithm~\ref{alg:maxDiag}.

\begin{algorithm}[h!]
  \caption{Sparse Greedy for Max-Norm Bounded Semidefinite Optimization}
  \label{alg:maxDiag}
\begin{algorithmic}
  \STATE {\bfseries Input:} Convex function $f$ with curvature $C_{f}$, target accuracy $\varepsilon$
  \STATE {\bfseries Output:} $\varepsilon$-approximate solution for problem~(\ref{eq:optMatInfLe1})
  \STATE Set $X^{(0)}:= vv^T$ for an arbitrary unit length vector $v \in \R^n$.
  \FOR{$k=0\dots\infty$}
  \STATE Let $\alpha := \frac2{k+2}$
  \STATE Compute $S := \approxLin\left(\nabla f(X^{(k)}),\MaxCutPolytope,\alpha C_f\right)$
  \STATE Update $X^{(k+1)}:=X^{(k)}+\alpha(S - X^{(k)})$
  \ENDFOR
\end{algorithmic}
\end{algorithm}

\paragraph{The Linearized Problem.}
Here, the internal subproblem $\approxLin()$ of approximately minimizing a linear function over the domain $\MaxCutPolytope$ of PSD matrices is a non-trivial task. Every call of $\approxLin(A,\MaxCutPolytope,\varepsilon')$ in fact means that we have to solve a semidefinite program $\min_{Y\in\MaxCutPolytope} Y \bullet A$ for a given matrix $A$, or in other words
\begin{equation}\label{eq:ADualMaxGapSDP}
\begin{array}{rl}
  \displaystyle\mini_Y & Y \bullet A \\ %
    s.t. &Y_{ii} \le 1 ~~~\forall i ,\\
        & Y \succeq 0 
\end{array}
\end{equation}
up to an additive approximation error of $\varepsilon'=\alpha C_f$. 

\paragraph{Relation to Max-Cut.}
In~\cite{Arora:2005uh,Kale:2007wk}, the same linear problem is denoted by $\textsc{(MaxQP)}$.
In the special case that $A$ is chosen as the Laplacian matrix of a graph, then the above SDP is widely known as the standard SDP relaxation of the Max-Cut problem~\cite{Goemans:1995ug} (not to be confused with the combinatorial Max-Cut problem itself, which is known to be NP-hard). In fact the original relaxation uses equality constraints $Y_{ii} = 1$ on the diagonal instead, but for any matrix $A$ of positive diagonal entries (such as e.g. a graph Laplacian), this condition follows automatically in the maximization variant of $(\ref{eq:ADualMaxGapSDP})$, see~\cite{Klein:1996hm}, or also~\cite{Gartner:2011tl,Kale:2007wk} for more background.

\paragraph{Duality and Duality of Norms.}
In Section~\ref{sec:maxNorm} we will see that the above quantity~(\ref{eq:ADualMaxGapSDP}) that determines both the step in our greedy Algorithm~\ref{alg:maxDiag}, but also the duality gap, is in fact the norm of $A$ that is dual to the matrix max-norm.

For optimization problems of the form~(\ref{eq:optMatInfLe1}), it can again be shown that the poor-man's duality given by the linearization (see also Section~\ref{sec:poorDual}) indeed coincides with classical \emph{Wolfe-duality} from the optimization literature.

Fortunately, it was shown by~\cite{Arora:2005uh} that also this linearized convex optimization problem~(\ref{eq:ADualMaxGapSDP}) --- and therefore also our internal procedure $\approxLin(.)$ --- can be solved relatively efficiently, 
if the matrix~$A$ (i.e. $\nabla f(X)$ in our case) is sparse.%
\footnote{%
Also, Kale in~\cite[Theorem 14]{Kale:2007wk} has shown that this problem can be solved very efficiently if the matrix $A=-\nabla f(X)$ is sparse. Namely if $A$ is the Laplacian matrix of a weighted graph, then a multiplicative $\varepsilon$-approximation to $(\ref{eq:ADualMaxGapSDP})$ can be computed in time $\tilde O(\frac{\Delta^2}{d^2}N_{\!A})$ time, where $N_{\!A}$ is the number of non-zero entries of the matrix $A$. Here $\Delta$ is the maximum entry on the diagonal of $A$, and $d$ is the average value on the diagonal.
}%

\begin{theorem}
The algorithm of~\cite{Arora:2005uh} delivers an additive $\varepsilon'$\hyph approximation
to the linearized problem~(\ref{eq:ADualMaxGapSDP}) in time
\[
\tilde O\left(\frac{n^{1.5}L^{2.5}}{\varepsilon'^{2.5}} N_{\!A} \right)
\]
where the constant $L>0$ is an upper bound on the maximum value of $Y\bullet A$ over $Y\in\MaxCutPolytope$, and $N_{\!A}$ is the number of non-zeros in $A$.
\end{theorem}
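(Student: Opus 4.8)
This theorem is in essence a transcription of the primal–dual SDP solver of Arora and Kale~\cite{Arora:2005uh} (the matrix multiplicative weights update method) into our setting, so the plan is not to reprove that machinery but to exhibit the linearized problem~(\ref{eq:ADualMaxGapSDP}) as an instance of the SDP they solve and then read off the running time, matching parameters. First I would observe that the feasible set $\MaxCutPolytope$ is exactly the domain treated as $\textsc{(MaxQP)}$ in~\cite{Arora:2005uh}, and that the minimization $\min_{Y\in\MaxCutPolytope} Y\bullet A$ is precisely the optimization SDP they consider (with the matrix $A=\nabla f(X)$ playing the role of the objective). Their method reduces this optimization SDP, via binary search on the optimal objective value, to $\tilde O(1)$ many feasibility tests of the form ``does there exist $Y\succeq0$, $Y_{ii}\le1$, with $Y\bullet A\le\theta$?''.

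Second, for each such feasibility test I would invoke the matrix multiplicative weights (MMW) framework: one maintains a density-matrix iterate obtained as a normalized matrix exponential of the accumulated (weighted) constraint matrices, and in each round calls a cheap oracle that either returns a violating direction or certifies approximate feasibility. The two governing quantities are the \emph{width} $\rho$ of the oracle and the target additive accuracy $\varepsilon'$; the MMW convergence analysis then caps the number of rounds at roughly $\tilde O(\rho^2/\varepsilon'^2)$. Here the width is controlled by the bound $L$ on $Y\bullet A$ over $\MaxCutPolytope$, i.e.\ $\rho\asymp L$, which is exactly the hypothesis of the theorem.

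Third, I would account for the per-round cost. The only nontrivial operation is approximating the relevant matrix exponential (equivalently, its dominant eigenvector), which can be carried out by Lanczos or the power method using only matrix–vector products with $A$; by an argument entirely analogous to Theorem~\ref{thm:powerLanczos}, each such approximation costs $\tilde O(N_{\!A})$ up to factors polynomial in $L/\varepsilon'$, since $N_{\!A}$ is the number of nonzeros of $A$. Multiplying the per-round cost by the round count $\tilde O(\rho^2/\varepsilon'^2)$, by the $\tilde O(1)$ binary-search overhead, and collecting the resulting dependence on $n$, $L$, and $\varepsilon'$, produces the claimed bound $\tilde O\!\left(\frac{n^{1.5}L^{2.5}}{\varepsilon'^{2.5}}N_{\!A}\right)$.

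The main obstacle is not any single calculation but the careful bookkeeping tying together three interacting parameters: the additive accuracy $\varepsilon'$ demanded of $\approxLin()$, the width $\rho\asymp L$ of the oracle, and the inexactness of the internal eigenvector/matrix-exponential computations. One must verify that the approximate matrix-exponential steps can be executed with error small enough that the MMW guarantee survives, while keeping each step near-linear in $N_{\!A}$; the precise exponents ($1.5$ on $n$, $2.5$ on $L/\varepsilon'$) emerge only from this combined accounting. Since this is precisely what is established in~\cite{Arora:2005uh}, I would cite that analysis for the final bound rather than reproduce it.
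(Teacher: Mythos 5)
Your proposal correctly locates the result in the \textsc{(MaxQP)} framework of~\cite{Arora:2005uh}, but it misses the one step that constitutes the actual content of the paper's proof. The theorem you would cite --- \cite[Theorem 3]{Arora:2005uh}, see also \cite[Theorem 33]{Kale:2007wk} --- is stated as a \emph{multiplicative} $(1-\varepsilon)$-approximation guarantee, with running time $\tilde O\bigl(\frac{n^{1.5}}{\varepsilon^{2.5}}\cdot\min\bigl\{N_{\!A},\frac{n^{1.5}}{\varepsilon\alpha^*}\bigr\}\bigr)$, where $\alpha^*$ is the optimal value of~(\ref{eq:ADualMaxGapSDP}). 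Since the theorem to be proved demands an \emph{additive} $\varepsilon'$-approximation, the proof consists precisely of the conversion: run their algorithm at multiplicative accuracy $\varepsilon := \varepsilon'/\alpha^*$, so that a $(1-\varepsilon)\alpha^*$-optimal point is additively $\varepsilon'$-optimal, and then use $\alpha^* \le L$ (the hypothesis on $L$) to bound $\varepsilon^{-2.5} = (\alpha^*)^{2.5}/\varepsilon'^{2.5} \le L^{2.5}/\varepsilon'^{2.5}$, which produces exactly the claimed $\tilde O\bigl(\frac{n^{1.5}L^{2.5}}{\varepsilon'^{2.5}}N_{\!A}\bigr)$. Your writeup never performs this multiplicative-to-additive translation; instead it attributes the $L$-dependence to the MMW oracle width, which is where the argument goes astray.

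Indeed, the bookkeeping you sketch would not reproduce the stated bound if carried out: a generic width-based round count $\tilde O(\rho^2/\varepsilon'^2)$ with $\rho \asymp L$, times a per-round cost of $\tilde O(N_{\!A})$, gives $\tilde O(L^2 N_{\!A}/\varepsilon'^2)$ --- no $n^{1.5}$ factor, and exponent $2$ rather than $2.5$ on $L/\varepsilon'$. The half-integer exponents and the $n^{1.5}$ factor arise from the internals of Arora and Kale's MaxQP analysis (their approximate matrix-exponential computations), which you explicitly decline to reproduce; so the phrase ``collecting the resulting dependence'' is exactly where your proof is unsupported, and citing~\cite{Arora:2005uh} ``for the final bound'' does not close it, because their final bound is multiplicative, not the additive one claimed. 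The repair is short: black-box the multiplicative theorem and add the two-line conversion via $\varepsilon := \varepsilon'/\alpha^*$ and $\alpha^* \le L$, which is what the paper does.
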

\begin{proof}
The results of \cite[Theorem 3]{Arora:2005uh} and~\cite[Theorem 33]{Kale:2007wk}
give a running time of order $\tilde O\left(\frac{n^{1.5}}{\varepsilon^{2.5}}\cdot\min\left\{N,\frac{n^{1.5}}{\varepsilon \alpha^*}\right\}\right)$ to obtain a multiplicative $(1-\varepsilon)$-approximation, where $\alpha^*$ is the value of an optimal solution. Formally we obtain $S\in\MaxCutPolytope$ with $S\bullet A \ge (1-\varepsilon)\alpha^*$. In other words by using an accuracy of $\varepsilon := \frac{\varepsilon'}{\alpha^*}$, we obtain an additive $\varepsilon'$-approximation to~(\ref{eq:ADualMaxGapSDP}).
\end{proof}
Here the notation $\tilde O(.)$ again suppresses poly-logarithmic factors in $n$, and $N$ is the number of non-zero entries of the matrix $A$. Note that analogous to the approximate eigenvector computation for Hazan's Algorithm~\ref{alg:greedyHazan}, we need the assumption that the linear function given by $Y\bullet \nabla f(X)$ is bounded over the domain $Y\in\MaxCutPolytope$. However this is a reasonable assumption, as our function has bounded curvature $C_f$ (corresponding to $\nabla f(X)$ being Lipschitz-continuous over the domain $\MaxCutPolytope$), and the diameter of $\MaxCutPolytope$ is bounded. %

The reason we need an absolute approximation quality lies in the analysis of Algorithm~\ref{alg:greedyConvex}, even if it would feel much more natural to work with relative approximation quality in many cases.

\paragraph{Convergence.}
The convergence result for the general Algorithm~\ref{alg:greedyConvex} directly gives us the analysis for the specialized algorithm here. Note that the curvature over the domain $\MaxCutPolytope$ here is given by
\begin{equation}\label{eq:CfMaxDiag}
C_f := \sup_{\substack{X,V \in \MaxCutPolytope, \, \alpha\in[0,1], \\
                    Y=X+\alpha(V-X)}}
            \textstyle\frac{1}{\alpha^{2}} \big(f(Y)-f(X)+(Y-X) \bullet \nabla f(X) \big) \ .
\end{equation}

\begin{theorem}\label{thm:algMaxDiag}
For each $k\ge1$, the iterate $X^{(k)}$ of Algorithm~\ref{alg:maxDiag} satisfies
\[
f(X^{(k)}) - f(X^*) \le \frac{8C_f}{k+2} \ .
\]
where $X^*\in \Spectahedron$ is an optimal solution to problem~(\ref{eq:optMatInfLe1}).

Furthermore, after at most $2\left\lceil\frac{8C_f}{\varepsilon}\right\rceil +1 = O(\frac1\varepsilon)$ many steps, it has an iterate $X^{(k)}$ with $g(X^{(k)}) \le\varepsilon$.
\end{theorem}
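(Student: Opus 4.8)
The plan is to recognize that Algorithm~\ref{alg:maxDiag} is nothing but the approximate variant of the general Algorithm~\ref{alg:greedyConvex}, specialized to the domain $D = \MaxCutPolytope \subseteq \Sym^{n\times n}$, so that the claimed bounds follow as a direct corollary of the general convergence Theorems~\ref{thm:primalGreedy} and~\ref{thm:primalDualGreedy}. First I would verify that the internal step-direction generator matches the requirements of the general analysis: the call $S := \approxLin(\nabla f(X^{(k)}), \MaxCutPolytope, \alpha C_f)$ returns, by definition, a point $S \in \MaxCutPolytope$ satisfying $S \bullet \nabla f(X^{(k)}) \le \min_{Y\in\MaxCutPolytope} Y\bullet \nabla f(X^{(k)}) + \alpha C_f$, which is exactly the additive-$\varepsilon'$ guarantee (with $\varepsilon' = \alpha C_f$) assumed for the approximate variant in Lemma~\ref{lem:step}. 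Here $C_f$ is the curvature constant of $f$ with respect to $\MaxCutPolytope$ as defined in~(\ref{eq:CfMaxDiag}), which is precisely the quantity appearing in the general curvature definition~(\ref{eq:Cf}) for this choice of domain.

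With that identification in place, the primal bound $f(X^{(k)}) - f(X^*) \le \frac{8C_f}{k+2}$ is immediate from the approximate-variant statement of Theorem~\ref{thm:primalGreedy}, where the factor $8$ (rather than $4$) arises exactly because the approximate primitive replaces every occurrence of $C_f$ by $2C_f$ in the induction, compare the second bound of Lemma~\ref{lem:step}. Since each iterate $X^{(k)}$ is a convex combination of the starting matrix $vv^T$ and the returned points $S \in \MaxCutPolytope$, feasibility $X^{(k)} \in \MaxCutPolytope$ is automatic, so no projection is ever needed and the general analysis applies verbatim.

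For the duality-gap claim I would invoke Theorem~\ref{thm:primalDualGreedy} with $K := \left\lceil \frac{8C_f}{\varepsilon}\right\rceil$, running the algorithm for $K$ iterations with the usual step-size $\alpha^{(k)} = \frac2{k+2}$ and then for a further $K+1$ iterations with the fixed step-size $\alpha^{(k)} = \frac2{K+2}$. The theorem then guarantees an iterate $X^{(\hat k)}$, $K \le \hat k \le 2K+1$, with $g(X^{(\hat k)}) \le \varepsilon$, giving the stated total of $2K+1 = 2\left\lceil \frac{8C_f}{\varepsilon}\right\rceil + 1 = O(\frac1\varepsilon)$ iterations. The duality gap $g(X)$ used here is the general one from~(\ref{eq:defGap}) specialized to $\MaxCutPolytope$, whose realizing linear subproblem is exactly~(\ref{eq:ADualMaxGapSDP}), as discussed above.

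Because the statement is a pure corollary, there is no genuine mathematical obstacle; the only thing to be careful about is bookkeeping. In particular one must use the curvature constant~(\ref{eq:CfMaxDiag}) taken over $\MaxCutPolytope$ (and not the spectahedron $\Spectahedron$, which the displayed statement mentions only by a minor typo), and one must respect the two-phase step-size schedule required by Theorem~\ref{thm:primalDualGreedy} so that the fixed-step-size contradiction argument in its proof goes through. I would therefore close the proof with a single sentence asserting that the result follows from Theorems~\ref{thm:primalGreedy} and~\ref{thm:primalDualGreedy} applied to $D = \MaxCutPolytope$, mirroring the corollary proofs already given for the simplex, the $\ell_1$-ball, and the spectahedron.
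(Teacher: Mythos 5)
Your proposal is correct and matches the paper exactly: the paper's own proof is the one-line observation that the theorem is a corollary of Theorem~\ref{thm:primalGreedy} (approximate variant, giving the factor $8C_f$) and Theorem~\ref{thm:primalDualGreedy} (with $K = \left\lceil\frac{8C_f}{\varepsilon}\right\rceil$), applied to the domain $\MaxCutPolytope$ with curvature~(\ref{eq:CfMaxDiag}). Your additional bookkeeping --- checking the $\approxLin$ guarantee, the two-phase step-size schedule, and the $\Spectahedron$-vs-$\MaxCutPolytope$ typo in the statement --- is accurate and merely spells out what the paper leaves implicit.
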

\begin{proof}
This is a corollary of Theorem~\ref{thm:primalGreedy} and Theorem~\ref{thm:primalDualGreedy}.
\end{proof}

\paragraph{Applications.}
The new algorithm can be used to solve arbitrary max-norm constrained convex optimization problems, such as max-norm regularized matrix completion problems, which we will study in Section~\ref{chap:optNucMax}.

\section{Sparse Semidefinite Optimization}\label{sec:algMatSparse}

Another interesting optimization domain among the semidefinite matrices is given by the matrices with only one non-zero off-diagonal entry. Here we specialize our general Algorithm~\ref{alg:greedyConvex} to convex optimization over the convex hull given by such matrices. Our algorithm will therefore obtain $\varepsilon$-approximate solutions given by only $O\left(\frac1\varepsilon\right)$ such sparse matrices, or in other words solutions of sparsity $O\left(\frac1\varepsilon\right)$.

\paragraph{Why bother?}
The same sparse matrices are also used in the graph sparsification approach by~\cite{Batson:2009hg}\footnote{%
The theoretical result of~\cite{Batson:2009hg} guarantees that all eigenvalues of the resulting sparse matrix (corresponding to the Laplacian of a sparse graph) do not differ too much from their counterparts in the original graph.
}%
.
Furthermore, sparse solutions to convex matrix optimization problems have gained interest in dimensionality reduction, as in sparse PCA, see~\cite{Zhang:2010tb} for an overview.

\paragraph{Setup.}
Formally, here we again use the standard Frobenius inner product $\langle X,Y \rangle = X\bullet Y$ over the symmetric matrices $\Sym^{n\times n}$, and consider the sparse PSD matrices given by $P^{(ij)} := (\unit_i + \unit_j) (\unit_i + \unit_j)^T = \left(\!\begin{smallmatrix}
\cdot&\cdot&\cdot&\cdot&\cdot\\
\cdot&1&\cdot&1&\cdot\\
\cdot&\cdot&\cdot&\cdot&\cdot\\
\cdot&1&\cdot&1&\cdot\\
\cdot&\cdot&\cdot&\cdot&\cdot\\
\end{smallmatrix}\!\right)$, for any fixed pair of indices $i,j\in[n]$, $i\ne j$. In other words $P^{(ij)}_{uv} = 1$ for $u\in\{i,j\}, v\in\{i,j\}$, and zero everywhere else.
We also consider the analogous ``negative'' counterparts of such matrices, namely $N^{(kl)} := (\unit_i - \unit_j) (\unit_i - \unit_j)^T = \left(\!\begin{smallmatrix}
\cdot&\cdot&\cdot&\cdot&\cdot\\
\cdot&1&\cdot&-1&\cdot\\
\cdot&\cdot&\cdot&\cdot&\cdot\\
\cdot&-1&\cdot&1&\cdot\\
\cdot&\cdot&\cdot&\cdot&\cdot\\
\end{smallmatrix}\!\right)$, i.e. $N^{(ij)}_{uv} = -1$ for the two off-diagonal entries $(u,v)\in\{(i,j),(j,i)\}$, and $N^{(ij)}_{uv} = 1$ for the two diagonal entries $(u,v) \in\{(i,i),(j,j)\}$, and zero everywhere else.\\

Analogously to the two previous applications of our method to semidefinite optimization, we now optimize a convex function, i.e. 
\begin{equation}\label{eq:optFourPointPSD}
\mini_{X\in\FourPointPSD} f(X) 
\end{equation}
over the domain\vspace{-5pt}
\[
D = \FourPointPSD := \conv\left(\bigcup_{ij} P^{(ij)} \cup \bigcup_{ij} N^{(ij)} \right) \ .
\]

\paragraph{Optimizing over Sparse Matrices, and Solving the Linearization.}
Applying our general Algorithm~\ref{alg:greedyConvex} to this class of problems~(\ref{eq:optFourPointPSD}) becomes very simple, as the linear primitive problem $\exactLin\left(D_{\!X},\FourPointPSD\right)$ for any fixed matrix $D_X \in \Sym^{n\times n}$ is easily solved over $\FourPointPSD$. From our simple Lemma~\ref{lem:convHull} on linear functions over convex hulls, we know that this linear minimum is attained by the single sparse matrix $P^{(ij)}$ or $N^{(ij)}$ that maximizes the inner product with $-D_{\!X}$. The optimal pair of indices $(k,l)$ can be found by a linear pass through the gradient $D_{\!X}=\nabla f(X)$. This means that the linearized problem is much easier to solve than in the above two Sections~\ref{sec:algTrace} and~\ref{sec:algMaxDiag}. Altogether, Algorithm~\ref{alg:greedyConvex} will build approximate solutions $X^{(k)}$, each of which is a convex combination of $k$ of the atomic matrices $P^{(ij)}$ or $N^{(ij)}$, as formalized in the following theorem:

\begin{theorem}\label{thm:algFourPointPSD}
Let $n\ge 2$ and let $X^{(0)}:=P^{(12)}$ be the starting point. Then for each $k\ge1$, the iterate $X^{(k)}$ of Algorithm~\ref{alg:greedyConvex} has at most $4(k+1)$ non-zero entries, and satisfies
\[
f(X^{(k)}) - f(X^*) \le \frac{8C_f}{k+2} \ .
\]
where $X^*\in \FourPointPSD$ is an optimal solution to problem~(\ref{eq:optFourPointPSD}).

Furthermore, for any $\varepsilon>0$, after at most $2\left\lceil\frac{8C_f}{\varepsilon}\right\rceil +1 = O\left(\frac1\varepsilon\right)$ many steps, it has an iterate $X^{(k)}$ of only $O\left(\frac1\varepsilon\right)$ many non-zero entries, satisfying $g(X^{(k)}) \le\varepsilon$.
\end{theorem}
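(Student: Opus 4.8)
The plan is to split the statement into two essentially independent parts. The convergence bound $f(X^{(k)})-f(X^*)\le\frac{8C_f}{k+2}$ and the existence of an iterate with guaranteed small duality gap after $O(\frac1\varepsilon)$ steps are a direct corollary of the general analysis, exactly as in Theorems~\ref{thm:algTrace} and~\ref{thm:algMaxDiag}. The only genuinely new content is the bound on the number of non-zero entries, which I would establish by an elementary union-of-supports argument tailored to the atomic matrices $P^{(ij)}$ and $N^{(ij)}$.

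For the convergence part, I would first record that Algorithm~\ref{alg:greedyConvex} applied to the domain $\FourPointPSD=\conv\left(\bigcup_{ij}P^{(ij)}\cup\bigcup_{ij}N^{(ij)}\right)$ is well defined: by Lemma~\ref{lem:convHull}, the linear subproblem $\exactLin(\nabla f(X),\FourPointPSD)$ attains its optimum at one of the generating ``vertices'', so each chosen step direction $S$ is an atomic matrix $P^{(ij)}$ or $N^{(ij)}$, identified by a single linear pass over $\nabla f(X)$. With the curvature constant $C_f$ of $f$ over $\FourPointPSD$ as defined in~(\ref{eq:Cf}), Theorem~\ref{thm:primalGreedy} then yields the primal error bound and Theorem~\ref{thm:primalDualGreedy} yields an iterate $X^{(\hat k)}$ with $g(X^{(\hat k)})\le\varepsilon$ within $2\lceil\frac{8C_f}{\varepsilon}\rceil+1$ iterations. (Since the primitive is solved exactly here, the sharper constant $4C_f$ of Theorem~\ref{thm:primalGreedy} is in fact available; the stated $8C_f$ bound therefore holds a fortiori.)

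For the sparsity claim, the key observation is that every atomic matrix $P^{(ij)}$ and $N^{(ij)}$ has exactly four non-zero entries, located at the positions $(i,i),(i,j),(j,i),(j,j)$. The update rule reads $X^{(k+1)}=(1-\alpha)X^{(k)}+\alpha S$ with $S$ atomic and $\alpha\in[0,1]$, so by induction $X^{(k)}$ is a convex combination of the starting point $X^{(0)}=P^{(12)}$ together with the $k$ atomic matrices $S_0,\dots,S_{k-1}$ selected in the first $k$ steps. Consequently the support of $X^{(k)}$ is contained in the union of the supports of these $k+1$ atomic matrices, giving at most $4(k+1)$ non-zero entries. Combining this with the $O(\frac1\varepsilon)$ iteration count from the convergence part produces an $\varepsilon$-approximate iterate with only $O(\frac1\varepsilon)$ non-zero entries.

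I expect no serious obstacle: both parts reduce to already-established results. The only points requiring a little care are verifying that $X^{(0)}=P^{(12)}$ is feasible (it is a generating vertex of $\FourPointPSD$, which needs $n\ge2$) and that the support estimate uses \emph{containment} in the union of supports rather than equality, so that any cancellations occurring in the convex combination can only decrease the count and never violate the upper bound $4(k+1)$.
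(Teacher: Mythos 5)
Your proposal is correct and follows essentially the same route as the paper, which likewise proves the theorem as a corollary of Theorems~\ref{thm:primalGreedy} and~\ref{thm:primalDualGreedy} and derives the sparsity bound from the observation that each step direction returned by $\exactLin\left(\nabla f(X),\FourPointPSD\right)$ is one of the atomic matrices $P^{(ij)}$ or $N^{(ij)}$; your union-of-supports induction merely spells out the detail the paper leaves implicit. Your side remark that the exact primitive would in fact permit the sharper constant $4C_f$ is also accurate --- the paper states $8C_f$, which holds a fortiori.
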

\begin{proof}
This is a corollary of Theorem~\ref{thm:primalGreedy} and Theorem~\ref{thm:primalDualGreedy}. The sparsity claim follows from our observation that the step directions given by $\exactLin\left(\nabla f(X),\FourPointPSD\right)$ are always given by one of the sparse matrices $P^{(ij)}$ or $N^{(ij)}$.
\end{proof}

\paragraph{Optimizing over Non-Negative Matrix Factorizations.}
We also consider the slight variant of~(\ref{eq:optFourPointPSD}), namely optimizing only over one of the two types of matrices as the domain $D$, i.e. only combinations of positive $P^{(ij)}$ or only of negative $N^{(ij)}$. This means that the domain is given by $D = \FourPointPSDplus := \conv\left(\bigcup_{ij} P^{(ij)} \right)$ or $D = \FourPointPSDminus := \conv\left(\bigcup_{ij} N^{(ij)} \right)$.
The above analysis for Algorithm~\ref{alg:greedyConvex} holds in exactly the same way. Now for $\FourPointPSDplus$, each step direction $s = s^{(k)}$ used by the algorithm is given by $s = P^{(ij)} = (\unit_i + \unit_j) (\unit_i + \unit_j)^T$ for some $i,j$, and so we have that each of the approximations $X^{(k)}$ is a sum of $k$ many positive rank-1 factors of this form. In other words in each step $k$, $X^{(k)} = LR^T$ is a product of two (entry-wise) non-negative matrices of at most $k$ columns each, i.e. $L \in \R^{n \times k}$ and $R^{n \times k}$. Consequently, our algorithm provides solutions that are \emph{non-negative matrix factorizations}, which is a successful technique in matrix completion problems from recommender systems, see e.g.~\cite{Wu:2007tm}.

\paragraph{Relation to Bounded Trace and Diagonally Dominant Matrices.}
Observe the matrices in $\FourPointPSD$ form a subset of the bounded trace PSD matrices $\Spectahedron$ that we studied in the previous Section~\ref{sec:algTrace}, since every matrix $P^{(ij)}$ or $N^{(ij)}$ is PSD and has trace equal two. Furthermore, we observe that all matrices $X \in \FourPointPSD$ are \emph{diagonally dominant}, meaning that 
\[
\abs{X_{ii}} \ge \sum_{j\ne i} \abs{X_{ij}} ~~~~~\forall i\in[n]
\]
In the case that we restrict to using only one of the two types of matrices $\FourPointPSDplus$ or $\FourPointPSDminus$ as the domain, then we have that \emph{equality} $\abs{X_{ii}} = \sum_{j\ne i} \abs{X_{ij}}$ always holds, since this equality is preserved under taking convex combinations, and holds for the atomic matrices $P^{(ij)}$ and $N^{(ij)}$.

\paragraph{The Curvature.}
The above reasoning also implies that the curvature $C_f$ for problems of the form~(\ref{eq:optFourPointPSD}) is upper bounded by the curvature in the spectahedron-case as given in~(\ref{eq:CfTrace}), since $\FourPointPSDplus \subseteq \FourPointPSD \subseteq 2\cdot \Spectahedron$.

\paragraph{Applications and Future Research.}
Computationally, the approach here looks very attractive, as the cost of a ``sparse'' step here is much cheaper than an approximate eigenvector computation which is needed in the bounded trace case as explained in Section~\ref{sec:algTrace}.

Also, it will be interesting to see how a regularization by constraining to a scaled domain $\FourPointPSD$ or $\FourPointPSDplus$ will perform in practical machine learning applications as for example dimensionality reduction, compared to nuclear norm regularization that we will discuss in the following Chapter~\ref{chap:optNucMax}.

It also remains to investigate further on whether we can approximate general bounded trace semidefinite problems of the form~(\ref{eq:optTrEq1}) by using only sparse matrices.

\section{Submodular Optimization}\label{sec:algSubmodular}

For a finite ground set $S$, a real valued function defined on all subsets of $S$, %
is called \emph{submodular}, if
\[
g(X\cap Y) + g(X\cup Y) \le g(X) + g(Y) ~~~~~\forall X,Y\subseteq S
\]

For any given submodular function $g$ with $g(\emptyset)=0$, the paper~\cite[Section 3]{Lovasz:1983wv} introduces a corresponding convex set in $\R^{\abs{S}}$, called the \emph{submodular polyhedron} (or also Lovasz polyhedron),
\[
\mathcal{P}_g := \SetOf{x \in \R^{\abs{S}}}{\sum_{i\in T}x_i \le g(T) ~~\forall T\subseteq S} \ .
\]

We would now like to study convex optimization problems over such domains, which become compact convex sets if we intersect with the positive orthant, i.e. $D:=\mathcal{P}_g \cap \R^{\abs{S}}_{\ge0}$.

Nicely for our optimization framework,~\cite[Section 3]{Lovasz:1983wv}  already showed that there is a simple greedy algorithm which optimizes any linear function over the domain $\mathcal{P}_g$, i.e. it solves $\displaystyle\max_{x\in\mathcal{P}_g} c^Tx$, or in other words it exactly solves our internal problem $\exactLin\left(c,\mathcal{P}_g\right)$.

Lovasz~\cite{Lovasz:1983wv} already demonstrated how to use this kind of linear optimization over $\mathcal{P}_g$ to solve submodular minimization problems. It remains to investigate if there are interesting applications for the wider class of more general convex (non-linear) functions $f$ over such domains, as addressed by our Algorithm~\ref{alg:greedyConvex}.

\section{Optimization with the Nuclear and Max-Norm}\label{chap:optNucMax}

Matrix optimization problems with a nuclear norm or max-norm regularization, such as e.g. low norm matrix factorizations, have seen many applications recently, ranging from low-rank recovery, dimensionality reduction, to recommender systems.
We propose two new first-order approximation methods building upon two of the simple semidefinite optimizers we have studied above, that is the approximate SDP solver of~\cite{Hazan:2008kz} from Section~\ref{sec:algTrace} on one hand, and our bounded diagonal optimizer from Section~\ref{sec:algMaxDiag} on the other hand.
The algorithms come with strong convergence guarantees.

In contrast to existing methods, our nuclear norm optimizer does not need any Cholesky or singular value decompositions internally, and provides guaranteed approximations that are simultaneously of low rank. The method is free of tuning parameters, and easy to parallelize.

\subsection{Introduction}

Here we consider convex optimization problems over matrices, which come with a regularization on either the nuclear norm or the max-norm of the optimization variable.

Convex optimization with the nuclear norm has become a very successful technique in various machine learning, computer vision and compressed sensing areas such as low-rank recovery \cite{Fazel:2001vw,Candes:2009kj,Candes:2010jb}, dimensionality reduction (such as robust principal component analysis~\cite{Candes:2011bd}), and also recommender systems and matrix completion. Here matrix factorizations~\cite{Srebro:2004tj,Koren:2009jg} --- regularized by the nuclear or max-norm --- have gained a lot of attention with the recently ended \emph{Netflix Prize} competition. Many more applications of similar optimization problems can be found among dimensionality reduction, matrix classification, multi-task learning, spectral clustering and others.
The success of these methods is fueled by the property of the nuclear norm being a natural convex relaxation of the rank, allowing the use of scalable convex optimization techniques.\\

Based on the semidefinite optimization methods that we have presented in the above Sections~\ref{sec:algTrace} and~\ref{sec:algMaxDiag}, we propose two new, yet simple, first-order algorithms for nuclear norm as well as max-norm regularized convex optimization.

For the nuclear norm case, our proposed method builds upon the first-order scheme for semidefinite optimization by~\cite{Hazan:2008kz}, which we have investigated in Section~\ref{subsec:algHazan}. This approach allows us to significantly reduce the computational complexity per iteration, and therefore scale to much larger datasets: 
While existing methods need an entire and exact singular value decomposition (SVD) in each step, our method only uses a single approximate eigenvector computation per iteration, which can be done by e.g. the power method.
A conference version of our work for nuclear norm regularized problems has appeared in~\cite{Jaggi:2010tz}.

In the same spirit, we will also give a new algorithm with a convergence guarantee for optimizing with a max-norm regularization. %
For matrix completion problems,  experiments show that the max-norm can result in an improved generalization performance compared to the nuclear norm in some cases~\cite{Srebro:2004tj,Lee:2010wd}.

\paragraph{Nuclear Norm Regularized Convex Optimization.}
We consider the following convex optimization problems over matrices:
\begin{equation}\label{eq:regNucLagrange}
\min_{Z \in \R^{m\times n}} f(Z) + \mu \nucnorm{Z}
\end{equation}
and the corresponding constrained variant
\begin{equation}\label{eq:constrainedNuc}
\min_{Z \in \R^{m\times n},\, \nucnorm{Z} \le \frac t2} f(Z)
\end{equation}
where $f(Z)$ is any differentiable convex function (usually called the loss function), $\nucnorm{.}$ is the \emph{nuclear norm} of a matrix, also known as the \emph{trace norm} (sum of the singular values, or $\ell_{1}$-norm of the spectrum). %
Here $\mu > 0$ and $t>0$ respectively are given parameters, usually called the \emph{regularization parameter}.

The nuclear norm is know as the natural generalization of the (sparsity inducing) $\ell_1$-norm for vectors, to the case of semidefinite matrices.
When choosing $f(X) := \norm{\mathcal{A}(X)-b}_{2}^{2}$ for some linear map $\mathcal{A} : \R^{n \times m} \rightarrow \R^{p}$, the above formulation~(\ref{eq:regNucLagrange}) is the matrix generalization of the problem
$
\min_{x \in \R^{n}} \norm{Ax-b}_{2}^{2} + \mu \norm{x}_{1}
$, 
for a fixed matrix $A$, which is the important $\ell_{1}$-\emph{regularized least squares problem},  also known as the basis pursuit de-noising problem in the compressed sensing literature, see also Section~\ref{sec:relComprSensing}. The analoguous vector variant of~(\ref{eq:constrainedNuc}) is the \emph{Lasso} problem~\cite{Tibshirani:1996wb} which is $\min_{x \in \R^{n}} \SetOf{ \norm{Ax-b}_{2}^{2} }{ \norm{x}_{1} \le t }$.

\paragraph{Max-Norm Regularized Convex Optimization.}
Intuitively, one can think of the matrix max-norm as the generalization of the vector $\ell_\infty$-norm to PSD matrices. Here we consider optimization problems with a max-norm regularization, which are given by
\begin{equation}\label{eq:regMaxLagrange}
\min_{Z \in \R^{m\times n}} f(Z) + \mu \maxnorm{Z}
\end{equation}
and the corresponding constrained variant being 
\begin{equation}\label{eq:constrainedMax}
\min_{Z \in \R^{m\times n},\, \maxnorm{Z} \le t} f(Z) \ .
\end{equation}

\paragraph{Our Contribution.}
Using the optimization methods from the previous Sections~\ref{sec:algTrace} and~\ref{eq:optMatInfLe1}, we present a much simpler algorithm to solve problems of the form~(\ref{eq:constrainedNuc}), which does not need any internal SVD computations. The same approach will also solve the max-norm regularized problems~(\ref{eq:constrainedMax}). We achieve this by transforming the problems to the convex optimization setting over positive semidefinite matrices which we have studied in the above Sections~\ref{subsec:algHazan} and~\ref{sec:algMaxDiag}.

Our new approach has several advantages for nuclear norm optimization when compared to the existing algorithms such as ``proximal gradient'' methods (APG) and ``singular value thresholding'' (SVT), see e.g.~\cite{Ganesh:2009dn,Cai:2010vw,Toh:2010vx,Ji:2009vd}, and also in comparison to the alternating-gradient-descent-type methods (as e.g.~\cite{Rennie:2005vn,Lin:2007ea}).

\begin{enumerate}

\item[i)] By employing the approximate SDP solver by~\cite{Hazan:2008kz}, see Algorithm~\ref{alg:greedyHazan}, we obtain a guaranteed $\varepsilon$-approximate solution $Z$ after $O\big(\frac1\varepsilon\big)$ iterations. Crucially, the resulting solution $Z$ is simultaneously of low rank, namely rank $O\big(\frac1\varepsilon\big)$.  Also the algorithm maintains a compact representation of $Z$ in terms of a low-rank matrix factorization $Z=LR^T$ (with the desired bounded nuclear norm), and can therefore even be applied if the full matrix $Z$ would be far too large to even be stored.

\item[ii)] Compared to the alternating-gradient-descent-type methods from machine learning, we overcome the problem of working with non-convex formulations of the form $f(LR^T)$, which is NP-hard, and instead solve the original convex problem in $f(Z)$.

\item[iii)] The total running time of our algorithm for nuclear norm problems grows linear in the problem size, allows to take full advantage of sparse problems such as e.g. for matrix completion. More precisely, the algorithm runs in time $O\left(\frac{N_f}{\varepsilon^{1.5}}\right)$, where $N_f$ is the number of matrix entries on which the objective function $f$ depends.
Per iteration, our method consists of only a single approximate (largest) eigenvector computation, allowing it to scale to any problem size where the power method (or Lanczos' algorithm) can still be applied. This also makes the method easy to implement and to parallelize.
Existing APG/SVT methods by contrast need an entire SVD in each step, which is significantly more expensive.

\item[iv)] On the theory side, our simple convergence guarantee of $O\big(\frac1\varepsilon\big)$ steps holds even if the used eigenvectors are only approximate. In comparison, those existing methods that come with a convergence guarantee do require an exact SVD in each iteration, which might not always be a realistic assumption in practice.
\end{enumerate}

We demonstrate that our new algorithm on standard datasets improves over the state of the art methods, and scales to large problems such as matrix factorizations on the Netflix dataset. 

Hazan's Algorithm~\ref{alg:greedyHazan} can be interpreted as the generalization of the \emph{coreset} approach to problems on symmetric matrices, which we have explained in the previous Section~\ref{subsec:algHazan}.  Compared to the $O(1/\sqrt{\varepsilon})$ convergence methods in the spirit of~\cite{Nesterov:1983wy,Nesterov:2007wm}, our number of steps is larger, which is however more than compensated by the improved step complexity, being lower by a factor of roughly $(n+m)$.

Our new method for the nuclear norm case can also be interpreted as a modified, theoretically justified variant of Simon Funk's popular SVD heuristic~\cite{Webb:2006vm} for regularized matrix factorization. To our knowledge this is the first guaranteed convergence result for this class of alternating-gradient-descent-type algorithms.

\paragraph{Related Work.}

For nuclear norm optimization, there are two lines of existing methods. On the one hand, in the optimization community,~\cite{Toh:2010vx,Liu:2009wj},~\cite{Ganesh:2009dn} and~\cite{Ji:2009vd} independently proposed algorithms that obtain an $\varepsilon$-accurate solution to~(\ref{eq:regNucLagrange}) in $O(1/\sqrt{\varepsilon})$ steps, by improving the algorithm of~\cite{Cai:2010vw}.  These methods are known under the names ``accelerated proximal gradient'' (APG) and ``singular value thresholding'' (SVT).
More recently also~\cite{Mazumder:2010va} and~\cite{Ma:2009kv} proposed algorithms along the same idea. Each step of all those algorithms requires the computation of the singular value decomposition (SVD) of a matrix of the same size as the solution matrix, which is expensive even with the currently available fast methods such as \textsf{PROPACK}.~\cite{Toh:2010vx} and~\cite{Ji:2009vd} and also~\cite{Ganesh:2009dn} show that the primal error of their algorithm is smaller than $\varepsilon$ after $O(1/\sqrt{\varepsilon})$ steps, using an analysis inspired by~\cite{Nesterov:1983wy} and~\cite{Beck:2009gh}. For an overview of related algorithms, we also refer the reader to~\cite{Candes:2011bd}.
As mentioned above, the method presented here has a significantly lower computational cost per iteration (one approximate eigenvector compared to a full exact SVD), and is also faster in practice on large matrix completion problems.

On the other hand, in the machine learning community, research originated from matrix completion and factorization~\cite{Srebro:2004tj}, later motivated by the Netflix prize challenge, getting significant momentum from the famous blog post by~\cite{Webb:2006vm}.  Only very recently an understanding has formed that many of these methods can indeed by seen as optimizing with regularization term closely related to the nuclear norm, see Section~\ref{sec:simonFunk} and~\cite{Jaggi:2010tz,Salakhutdinov:2010tpa}.
The majority of the currently existing machine learning methods such as for example~\cite{Rennie:2005vn,Lin:2007ea} and later also 
\cite{Paterek:2007va,Zhou:2008hh,Koren:2009jg,Takacs:2009ug,Ilin:2010tl,Gemulla:2011wk}
are of the type of ``alternating'' gradient descent applied to $f(LR^T)$, where at each step one of the factors $L$ and $R$ is kept fixed, and the other factor is updated by a gradient or stochastic gradient step. Therefore, despite working well in many practical applications, all these mentioned methods can get stuck in local minima --- and so are theoretically not well justified, see also the discussion in~\cite{DeCoste:2006wt} and our Section~\ref{sec:transform}.

The same issue also comes up for max-norm optimization, where for example~\cite{Lee:2010wd} optimize over the non-convex factorization~(\ref{eq:maxNormMF}) for bounded max-norm. To our knowledge, no algorithm with a convergence guarantee was known so far.

Furthermore, optimizing with a rank constraint was recently shown to be NP-hard~\cite{Gillis:2010wua}.
In practical applications, nearly all approaches for large scale problems are working over a factorization $Z=LR^T$ of bounded rank, therefore ruling out their ability to obtain a solution in polynomial time in the worst-case, unless $\text{P}=\text{NP}$.

Our new method for both nuclear and max-norm avoids all the above described problems by solving an equivalent convex optimization problem, and provably runs in near linear time in the nuclear norm case.

\subsection{The Nuclear Norm for Matrices}\label{sec:nucNorm}

The \emph{nuclear norm} $\nucnorm{Z}$ of a rectangular matrix $Z \in \R^{m \times n}$, also known as the \emph{trace norm} or \emph{Ky Fan norm}, is given by the sum of the singular values of $Z$, which is equal to the $\ell_1$-norm of the singular values of $Z$ (because singular values are always non-negative).
Therefore, the nuclear norm is often called the Schatten $\ell_1$-norm. In this sense, it is a natural generalization of the $\ell_{1}$-norm for vectors which we have studied earlier. 

The nuclear norm has a nice equivalent characterization in terms of matrix factorizations of~$Z$, i.e.
\begin{equation}\label{eq:nucNormMF}
\nucnorm{Z} :=
\min_{LR^T = Z} \frac{1}{2}\big( \frobnorm{L}^{2} +
                                           \frobnorm{R}^{2} \big) \ ,
\end{equation}
where the number of columns of the factors $L \in \R^{m \times k}$ and $R^{n \times k}$ is not constrained~\cite{Fazel:2001vw,Srebro:2004tj}. In other words, the nuclear norm constrains the average Euclidean row or column norms of any factorization of the original matrix $Z$.

Furthermore, the nuclear norm is \emph{dual} to the standard spectral matrix norm (i.e. the matrix operator norm), meaning that
\[
\nucnorm{Z} = \max_{B, \spectnorm{B} \le 1} B \bullet Z \ ,
\]
see also~\cite{Recht:2010tf}. Recall that $\spectnorm{B}$ is defined as the first singular value $\sigma_1(B)$ of the matrix~$B$.

Similarly to the property of the vector $\norm{.}_1$-norm being the best convex approximation to the sparsity of a vector, as we discussed in Section~\ref{sec:vecL1} the nuclear norm is the best convex approximation of the matrix rank. More precisely, $\nucnorm{.}$ is the convex envelope of the rank~\cite{Fazel:2001vw}, meaning that it is the largest convex function that is upper bounded by the rank on the convex domain of matrices $\SetOf{Z}{\spectnorm{Z} \le 1}$.
This motivates why the nuclear norm is widely used as a proxy function (or convex relaxation) for \emph{rank minimization}, which otherwise is a hard combinatorial problem.

Its relation to semidefinite optimization --- which explains why the nuclear norm is often called the trace norm --- is that 
\begin{equation}\label{eq:nucNormSDP}
\begin{array}{rl}
\nucnorm{Z} = \displaystyle \mini_{V,W} & t\\
s.t. &  \begin{pmatrix}
  V & Z  \\
  Z^T & W 
\end{pmatrix} \succeq 0 \ \text{ and } \\ &  \tr(V) + \tr(W) \le 2t  \ .
\end{array}
\end{equation}

Here the two optimization variables range over the symmetric matrices $V \in\Sym^{m\times m}$ and $W \in\Sym^{n\times n}$. 
This semidefinite characterization will in fact be the central tool for our algorithmic approach for nuclear norm regularized problems in the following.
The equivalence of the above characterization to the earlier ``factorization'' formulation~(\ref{eq:nucNormMF}) is a consequence of the following simple Lemma~\ref{lem:nucPSD}. The Lemma gives a correspondence between the (rectangular) matrices $Z \in \R^{m \times n}$ of bounded nuclear norm on one hand, and the (symmetric) PSD matrices $X \in \Sym^{(m+n) \times (m+n)}$ of bounded trace on the other hand.

\begin{lemma}[{\cite[Lemma 1]{Fazel:2001vw}}]\label{lem:nucPSD}
For any non-zero matrix $Z \in \R^{m \times n}$ and $t \in \R$, it holds that
\[
\nucnorm{Z}  \le \frac t2 %
\vspace{-0.1em}
\]
if and only if\vspace{-0.4em}
\[
\begin{array}{l}
\exists \text{ symmetric matrices }  V \in \Sym^{m \times m}, W \in \Sym^{n \times n} \\
s.t.\ 
\begin{pmatrix}
  V & Z  \\
  Z^T & W 
\end{pmatrix} \succeq 0  ~\text{  and  }~ 
\tr(V)+\tr(W) \le t \ .
\end{array}
\]
\end{lemma}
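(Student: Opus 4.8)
The statement is an \emph{if and only if}, so the plan is to prove the two implications separately, using the singular value decomposition (SVD) for the forward direction and the dual characterization of the nuclear norm for the reverse. Throughout I write the thin SVD of $Z$ as $Z = U\Sigma P^T$, where $U\in\R^{m\times r}$ and $P\in\R^{n\times r}$ have orthonormal columns (so $U^TU = P^TP = \id_r$) and $\Sigma = \diag(\sigma_1,\dots,\sigma_r)\succeq 0$ collects the singular values, so that $\nucnorm{Z} = \tr(\Sigma) = \sum_i \sigma_i$.

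For the forward direction, assuming $\nucnorm{Z}\le \frac t2$, I would simply exhibit an explicit certificate. The natural guess is $V := U\Sigma U^T$ and $W := P\Sigma P^T$, which are symmetric and PSD. The key identity to verify is the block factorization
\[
\begin{pmatrix} V & Z \\ Z^T & W \end{pmatrix}
= \begin{pmatrix} U \\ P \end{pmatrix} \Sigma \begin{pmatrix} U \\ P \end{pmatrix}^T \ ,
\]
which is immediate once one checks that the off-diagonal blocks equal $U\Sigma P^T = Z$ and $P\Sigma U^T = Z^T$. Since $\Sigma\succeq 0$, the right-hand side is of the form $M\Sigma M^T$ and hence PSD. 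Finally $\tr(V)+\tr(W) = \tr(\Sigma U^TU) + \tr(\Sigma P^TP) = 2\tr(\Sigma) = 2\nucnorm{Z} \le t$ by cyclicity of the trace and orthonormality of the columns, giving exactly the required constraint.

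For the reverse direction, I would use the dual-norm formula $\nucnorm{Z} = \max_{\spectnorm{B}\le 1} B\bullet Z$ recalled earlier in this section. Fix any $B$ with $\spectnorm{B}\le 1$ and form $Y := \begin{pmatrix} \id_m & -B \\ -B^T & \id_n \end{pmatrix}$; by the standard Schur-complement fact, $Y\succeq 0$ precisely because $\spectnorm{B}\le 1$. Given a feasible pair $(V,W)$ with $X := \begin{pmatrix} V & Z \\ Z^T & W\end{pmatrix}\succeq 0$, I would use that the Frobenius inner product of two PSD matrices is non-negative, i.e. $X\bullet Y\ge 0$. Expanding the block inner product gives $X\bullet Y = \tr(V) + \tr(W) - 2\,(B\bullet Z)\ge 0$, hence $B\bullet Z \le \frac12(\tr(V)+\tr(W)) \le \frac t2$. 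Taking the supremum over all admissible $B$ yields $\nucnorm{Z}\le \frac t2$.

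The two routine computations (the block factorization, and the block inner-product expansion $X\bullet Y = \tr(V)+\tr(W)-2\,B\bullet Z$, where one uses $Z^T\bullet B^T = Z\bullet B$) are straightforward, so the main obstacle is really packaging the two supporting facts cleanly: that $\begin{pmatrix}\id & -B\\ -B^T & \id\end{pmatrix}\succeq 0$ if and only if $\spectnorm{B}\le 1$, and that $X\bullet Y\ge0$ for PSD $X,Y$. Both are standard, but the Schur-complement characterization of the spectral-norm ball is the one place where I would be careful to state the equivalence in the direction I need. The case $Z=0$ is handled trivially by taking $V=W=0$, so the non-zero hypothesis on $Z$ is only a convenience.
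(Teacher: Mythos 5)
Your proposal is correct, but it takes a genuinely different route from the paper. The paper proves both directions from the factorization characterization $\nucnorm{Z} = \min_{LR^T=Z}\frac12\bigl(\frobnorm{L}^2+\frobnorm{R}^2\bigr)$ stated in~(\ref{eq:nucNormMF}): for ``$\Rightarrow$'' it picks a minimizing factorization $Z=LR^T$ and exhibits $\bigl(\begin{smallmatrix} LL^T & Z\\ Z^T & RR^T\end{smallmatrix}\bigr) = (\substack{L\\R})(\substack{L\\R})^T \succeq 0$ of trace at most $t$; for ``$\Leftarrow$'' it Cholesky-factorizes the given PSD block matrix as $(L;R)(L;R)^T$, reads off $Z=LR^T$ with $\frobnorm{L}^2+\frobnorm{R}^2\le t$, and invokes~(\ref{eq:nucNormMF}) again. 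Your forward direction is close in spirit --- your $V=U\Sigma U^T$, $W=P\Sigma P^T$ is exactly the paper's certificate for the particular factorization $L=U\Sigma^{1/2}$, $R=P\Sigma^{1/2}$ --- but you work directly from the SVD definition of $\nucnorm{\cdot}$ rather than citing~(\ref{eq:nucNormMF}). The real divergence is the reverse direction: instead of Cholesky plus the variational characterization, you run a weak-duality argument through the dual-norm formula $\nucnorm{Z}=\max_{\spectnorm{B}\le1}B\bullet Z$, pairing the feasible $X$ against the Schur-complement certificate $Y=\bigl(\begin{smallmatrix}\id & -B\\ -B^T & \id\end{smallmatrix}\bigr)\succeq 0$ and using $X\bullet Y\ge0$; your expansion $X\bullet Y=\tr(V)+\tr(W)-2\,B\bullet Z$ is correct. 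What each approach buys: the paper's proof is shorter and reuses machinery already on the page, but its ``$\Leftarrow$'' leans on the nontrivial fact that~(\ref{eq:nucNormMF}) bounds $\nucnorm{Z}$ by \emph{every} factorization --- note the paper presents this very lemma as the reason~(\ref{eq:nucNormSDP}) and~(\ref{eq:nucNormMF}) are equivalent, so it is treating~(\ref{eq:nucNormMF}) as an externally cited fact. Your version instead grounds everything in the SVD definition plus the (also externally cited) dual-norm formula, and is self-contained modulo two standard linear-algebra facts; the price is slightly more bookkeeping, and as you note, the one place demanding care is stating the Schur-complement equivalence in the direction $\spectnorm{B}\le1\Rightarrow Y\succeq0$, which is the only direction you actually use. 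Both proofs are sound; yours additionally makes the certificate in the forward direction fully explicit.
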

\begin{proof}
\fbox{$\Rightarrow$}
Using the characterization~(\ref{eq:nucNormMF}) of the nuclear norm $\nucnorm{Z} = \min_{LR^T = Z} \frac{1}{2}(\frobnorm{L}^{2} + \frobnorm{R}^{2})$ we get that $\exists ~ L,R$, $LR^T = Z$ s.t. $\frobnorm{L}^{2} + \frobnorm{R}^{2} = \tr(LL^T) + \tr(RR^T) \le t$, or in other words we have found a matrix $\big(\begin{smallmatrix} LL^T & Z  \\  Z^T & RR^T \end{smallmatrix}\big) = (\substack{L\\R})(\substack{L\\R})^T \succeq 0$ of trace $\le t$.
\\
\fbox{$\Leftarrow$}
As the matrix $\big(\begin{smallmatrix}V & Z  \\  Z^T & W \end{smallmatrix}\big)$ is symmetric and PSD, it can be (Cholesky) factorized to $(L; R)(L; R)^T$ s.t. $L R^T = Z$ and $t \ge \tr(LL^T) + \tr(RR^T) = \frobnorm{L}^2 + \frobnorm{R}^2$, therefore $\nucnorm{Z}  \le \frac{t}{2}$.
\end{proof}

Interestingly, for characterizing bounded nuclear norm matrices, it does not make any difference whether we enforce an equality or inequality constraint on the trace. This fact will turn out to be useful in order to apply our Algorithm~\ref{alg:greedyHazan} later on.

\begin{corollary}\label{cor:nucPSDeq}
For any non-zero matrix $Z \in \R^{m \times n}$ and $t \in \R$, it holds that
\[
\nucnorm{Z}  \le \frac t2 %
\vspace{-0.1em}
\]
if and only if\vspace{-0.4em}
\[
\begin{array}{l}
\exists \text{ symmetric matrices }  V \in \Sym^{m \times m}, W \in \Sym^{n \times n} \\
s.t.\ 
\begin{pmatrix}
  V & Z  \\
  Z^T & W 
\end{pmatrix} \succeq 0  ~\text{  and  }~ 
\tr(V)+\tr(W) = t \ .
\end{array}
\]
\end{corollary}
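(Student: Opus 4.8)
The plan is to derive this corollary directly from the preceding Lemma~\ref{lem:nucPSD}, whose statement is word-for-word identical except that it carries the inequality constraint $\tr(V)+\tr(W) \le t$ in place of the equality $\tr(V)+\tr(W) = t$. Hence the only genuine content is that the trace constraint may be tightened to an equality without changing the set of feasible $Z$. The $\Leftarrow$ direction then requires no work at all: if there exist symmetric $V,W$ with the block matrix PSD and $\tr(V)+\tr(W)=t$, then in particular $\tr(V)+\tr(W)\le t$, so Lemma~\ref{lem:nucPSD} immediately yields $\nucnorm{Z} \le \frac t2$.

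For the $\Rightarrow$ direction I would start from $\nucnorm{Z}\le\frac t2$ and invoke Lemma~\ref{lem:nucPSD} to obtain symmetric matrices $V,W$ with $\big(\begin{smallmatrix} V & Z \\ Z^T & W \end{smallmatrix}\big) \succeq 0$ and $\tr(V)+\tr(W)=:t' \le t$. The idea is then to absorb the slack $t-t'\ge 0$ into one of the diagonal blocks, bringing the trace up to exactly $t$. Concretely, I would replace $V$ by $V' := V + (t-t')\,\unit_1\unit_1^T$, a rank-one PSD bump on a single diagonal entry (note that $m\ge 1$ since $Z\ne\zero$, so the basis vector $\unit_1\in\R^m$ exists), while leaving $W$ untouched. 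Then $\tr(V')+\tr(W) = t' + (t-t') = t$ holds by construction.

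The one step that needs justification is that the enlarged block matrix stays PSD, and this is where the short computation goes. Writing
\[
\begin{pmatrix} V' & Z \\ Z^T & W \end{pmatrix}
= \begin{pmatrix} V & Z \\ Z^T & W \end{pmatrix}
+ (t-t')\begin{pmatrix} \unit_1\unit_1^T & \0 \\ \0 & \0 \end{pmatrix} \ ,
\]
I observe that the first summand is PSD by hypothesis and the second is a nonnegative multiple of a (rank-one) PSD matrix, so the sum lies in the PSD cone. This furnishes symmetric matrices $V',W$ with the block matrix PSD and $\tr(V')+\tr(W)=t$, which is exactly the required feasibility certificate.

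I do not expect any real obstacle: the whole argument rests on the elementary monotonicity of the PSD cone under addition, which lets us add mass to the top-left block to fine-tune the trace upward without destroying semidefiniteness. The only mild points to keep track of are that $Z\ne\zero$ guarantees $m,n\ge 1$ so that the perturbation makes sense, and that one could equally distribute the slack symmetrically as $\frac{t-t'}{m}\,\id$ on $V$ if a less ad hoc bump is preferred; the verification is identical.
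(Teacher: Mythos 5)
Your proposal is correct and follows essentially the same route as the paper's own proof: both directions invoke Lemma~\ref{lem:nucPSD}, with the $\Leftarrow$ direction immediate since equality implies the inequality, and the $\Rightarrow$ direction absorbing the trace slack by adding the PSD rank-one matrix $(t-t')\,\unit_1\unit_1^T$ to the top-left block, exactly as the paper does. Your explicit verification that the perturbed block matrix remains PSD (as a sum of two PSD matrices) is the same observation the paper makes in passing, just spelled out more carefully.
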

\begin{proof}
\fbox{$\Rightarrow$}
From Lemma~\ref{lem:nucPSD} we obtain a matrix $\big(\begin{smallmatrix}V & Z  \\  Z^T & W \end{smallmatrix}\big) =:X \succeq 0$ of trace say $s \le t$. 
If $s < t$, we add $(t - s)$ to the top-left entry of $V$, i.e. we add to $X$ the PSD rank-1 matrix $(t - s) \unit_{1} \unit_{1}^T$ (which again gives a PSD matrix). 
\fbox{$\Leftarrow$} follows directly from Lemma~\ref{lem:nucPSD}.
\end{proof}

\subsubsection{Weighted Nuclear Norm}\label{sec:weightedNucNorm}

A promising weighted nuclear norm regularization for matrix completion was 
recently proposed by~\cite{Salakhutdinov:2010tpa}. For fixed weight vectors $p \in \R^m, q \in
\R^n$, the weighted nuclear norm $\norm{Z}_{nuc(p,q)}$ of $Z \in \R^{m
  \times n}$ is defined as
\[
\norm{Z}_{nuc(p,q)} := \nucnorm{PZQ} ,
\]
where $P = \diag(\sqrt{p}) \in \R^{m \times m}$ denotes the diagonal
matrix whose $i$-th diagonal entry is $\sqrt{p_i}$, and analogously
for $Q = \diag(\sqrt{q}) \in \R^{n \times n}$. Here $p \in \R^m$ is the vector whose
entries are the probabilities $p(i)>0$ that the $i$-th row is observed
in the sampling $\Omega$. Analogously, $q \in \R^n$ contains the
probability $q(j)>0$ for each column $j$. The opposite weighting (using $\frac1{p(i)}$ and $\frac1{q(j)}$ instead of $p(i)$,$q(j)$) has also been suggested by~\cite{Weimer:2008tl}. %

Any optimization problem with a weighted nuclear norm regularization
\begin{equation}\label{eq:constrainedWeightedNuc}
\min_{Z \in \R^{m \times n},\, \norm{Z}_{nuc(p,q)} \,\le\, t/2} f(Z)
\end{equation}
and arbitrary loss function $f$ can therefore be formulated equivalently over
the domain $\nucnorm{PZQ} \le t/2$, such that it reads as (if we
substitute $\bar Z := PZQ$), 
\[
\min_{\bar Z \in \R^{m \times n},\, \nucnorm{\bar Z} \,\le\, t/2} f(P^{-1}\bar Z Q^{-1}).
\]
Hence, we have reduced the task to our standard convex
problem~(\ref{eq:constrainedNuc}) for $\hat f$ that here is defined as
\[
\hat f(X) := f(P^{-1}\bar Z Q^{-1}),
\]
where $X =: \big(\begin{smallmatrix} V &\!\! \bar Z \\  \bar Z^T &\!\!  W \end{smallmatrix}\big)$.
This equivalence implies that any algorithm solving~(\ref{eq:constrainedNuc}) also serves as an algorithm for weighted nuclear norm regularization. 
In particular, Hazan's Algorithm~\ref{alg:greedyHazan} does imply a guaranteed approximation quality of $\varepsilon$ for problem~(\ref{eq:constrainedWeightedNuc}) after $O\big(\frac1\varepsilon\big)$ many rank-1 updates, as we discussed in Section~\ref{sec:algTrace}.
So far, to the best of our knowledge, no approximation guarantees were known for the weighted nuclear norm.

Solution path algorithms (maintaining approximation guarantees when the regularization parameter $t$ changes) as proposed by~\cite{Giesen:2010fx,Giesen:2012uj,Giesen:2012vh}, and the author's PhD thesis~\cite{Jaggi:2011ux}, can also be extended to the case of the weighted nuclear norm.

\subsection{The Max-Norm for Matrices}\label{sec:maxNorm}

We think of the matrix max-norm as a generalization of the vector $\ell_\infty$-norm to the case of positive semidefinite matrices, which we have studied before.

In some matrix completion applications, the max-norm has been observed to provide solutions of better generalization performance than the nuclear norm~\cite{Srebro:2004tj}. Both matrix norms can be seen as a convex surrogate of the rank~\cite{Srebro:2005jj}. %

The \emph{max-norm} $\maxnorm{Z}$ of a rectangular matrix $Z \in \R^{m \times n}$ has a nice characterization in terms of matrix factorizations of $Z$, i.e.
\begin{equation}\label{eq:maxNormMF}
\maxnorm{Z} :=
\min_{LR^T = Z} \max\{ \norm{L}_{2,\infty}^2 , 
                                    \norm{R}_{2,\infty}^2 \} \ ,
\end{equation}
where the number of columns of the factors $L \in \R^{m \times k}$ and $R^{n \times k}$ is
not constrained~\cite{Lee:2010wd}. Here $\norm{L}_{2,\infty}$ is the maximum $\ell_2$-norm of any row $L_{i:}$ of $L$, that is $\norm{L}_{2,\infty} := \max_i \norm{L_{i:}}_2 = \max_i \sqrt{\sum_k L_{ik}^2}$. Compared to the nuclear norm, we therefore observe that the max-norm constrains the maximal Euclidean row-norms of any factorization of the original matrix $Z$, see also~\cite{Srebro:2005jj}.
\footnote{%
Note that the max-norm does \emph{not} coincide with the matrix norm induced by the vector $\norm{.}_\infty$-norm, that is $\norm{Z}_\infty := \sup_{x\ne 0}\frac{\norm{Zx}_\infty}{\norm{x}_\infty}$. The latter matrix norm by contrast is known to be the maximum of the row sums of $Z$ (i.e. the $\ell_1$-norms of the rows). %
}%

An alternative formulation of the max-norm was given by~\cite{Linial:2007gv} and~\cite{Srebro:2005jj}, stating that 
\[
\maxnorm{Z} = \min_{LR^T=Z} (\max_i ||L_{i:}||_2) (\max_i ||R_{i:}||_2) \ .
\]

The dual norm to the max-norm, as given in~\cite{Srebro:2005jj}, is
\[
\begin{array}{rl}
\maxnorm{Z}^* =& \displaystyle\max_{\maxnorm{Y}\le 1} Z\bullet Y \\
                        =& \displaystyle\max_{\substack{k,\\
                                       l_i\in\R^k, \norm{l_i}_2\le1\\
                                       r_j\in\R^k, \norm{r_j}_2\le1}}
                                    \sum_{i,j} Z_{ij} \, l_i^T r_j
\ ,
\end{array}
\]
where the last equality follows from the characterization~(\ref{eq:maxNormMF}).

The relation of the max-norm to semidefinite optimization --- which also explains the naming of the max-norm --- is that 
\begin{equation}\label{eq:maxNormSDP}
\begin{array}{rl}
\maxnorm{Z} = \displaystyle \mini_{V,W} & t\\
s.t. &  \begin{pmatrix}
  V & Z  \\
  Z^T & W 
\end{pmatrix} \succeq 0 \ \text{ and } 
\begin{array}{rl}
V_{ii} \le t &\! \forall i\in[m] ,\\
W_{ii} \le t &\! \forall i\in[n]
\end{array}
\end{array}
\end{equation}

Here the two optimization variables range over the symmetric matrices $V \in\Sym^{m\times m}$ and $W \in\Sym^{n\times n}$, see for example~\cite{Lee:2010wd}. %
As already in the nuclear norm case, this semidefinite characterization will again be the central tool for our algorithmic approach for max-norm regularized problems in the following.
The equivalence of the above characterization to the earlier ``factorization'' formulation~(\ref{eq:maxNormMF}) is a consequence of the following simple Lemma~\ref{lem:maxPSD}. The Lemma gives a correspondence between the (rectangular) matrices $Z \in \R^{m \times n}$ of bounded max-norm on one hand, and the (symmetric) PSD matrices $X \in \Sym^{(m+n) \times (m+n)}$ of uniformly bounded diagonal on the other hand.

\begin{lemma}\label{lem:maxPSD}
For any non-zero matrix $Z \in \R^{n \times m}$ and $t \in \R$:
\[
\maxnorm{Z} \le t  \vspace{-4pt}
\]
if and only if
\vspace{-0.5\baselineskip}
\[
\begin{array}{l}
\exists \text{ symmetric matrices }  V \in \Sym^{m \times m}, W \in \Sym^{n \times n} \\
s.t.\ 
\begin{pmatrix}
  V & Z  \\
  Z^T & W 
\end{pmatrix} \succeq 0  ~\text{  and  }~ 
\begin{array}{rl}
V_{ii} \le t &\! \forall i\in[m] ,\\
W_{ii} \le t &\! \forall i\in[n]
\end{array}
\end{array}
\]
\end{lemma}
\begin{proof}
\fbox{$\Rightarrow$}
Using the above characterization~(\ref{eq:maxNormMF}) of the max-norm, or namely that $\maxnorm{Z} = \min_{LR^T = Z} \max\{ \norm{L}_{2,\infty}^2 , 
                                    \norm{R}_{2,\infty}^2 \}$, we get that $\exists L,R$ with $LR^T = Z$, s.t. $\max\{ \norm{L}_{2,\infty}^2 , 
                                    \norm{R}_{2,\infty}^2 \}
                       = \max\{ \max_i \norm{L_{i:}}_2^2 , 
                                    \max_i \norm{R_{i:}}_2^2 \} \le t$, or in other words we have found a matrix $\big(\begin{smallmatrix} LL^T & Z  \\  Z^T & RR^T \end{smallmatrix}\big) = (L; R)(L; R)^T \succeq 0$ where every diagonal element is at most $t$, that is $\norm{L_{i:}}_2^2 = (LL^T)_{ii} \le t \ \forall i\in[m]$, and $\norm{R_{i:}}_2^2 = (RR^T)_{ii} \le t \ \forall i\in[n]$.
\\
\fbox{$\Leftarrow$}
As the matrix $\big(\begin{smallmatrix}V & Z  \\  Z^T & W \end{smallmatrix}\big)$ is symmetric and PSD, it can be (Cholesky) factorized to $(L; R)(L; R)^T$ s.t. $L R^T = Z$ and $\norm{L_{i:}}_2^2 = (LL^T)_{ii} \le t \ \forall i\in[m]$ and $\norm{R_{i:}}_2^2 = (RR^T)_{ii} \le t \ \forall i\in[n]$, which implies $\maxnorm{Z} \le t$.
\end{proof}

\subsection{Optimizing with Bounded Nuclear Norm and Max-Norm}\label{sec:transform}

Most of the currently known algorithms for matrix factorizations as well as nuclear norm or max-norm regularized optimization problems, such as~(\ref{eq:regNucLagrange}),~(\ref{eq:constrainedNuc}),~(\ref{eq:regMaxLagrange}) or~(\ref{eq:constrainedMax}), do suffer from the following problem: 

In order to optimize the convex objective function $f(Z)$ while controlling the norm $\nucnorm{Z}$ or $\maxnorm{Z}$, the methods instead try to optimize $f(LR^T)$, with respect to both factors $L \in \R^{m\times k}$ and $R \in \R^{n\times k}$, with the corresponding regularization constraint imposed on $L$ and $R$. This approach is of course very tempting, as the constraints on the factors --- which originate from the matrix factorization characterizations ~(\ref{eq:nucNormMF}) and~(\ref{eq:maxNormMF}) --- are simple and in some sense easier to enforce.

\paragraph{Unhealthy Side-Effects of Factorizing.}
However, there is a significant price to pay: Even if the objective function $f(Z)$ is \emph{convex} in $Z$, the very same function expressed as a function $f(LR^T)$ of both the factor variables $L$ and $R$ becomes a severely \emph{non-convex} problem, naturally consisting of a large number of saddle-points (consider for example just the smallest case $L,R \in \R^{1 \times 1}$ together with the identity function $f(Z) = Z \in \R$). 

The majority of the currently existing methods such as for example~\cite{Rennie:2005vn,Lin:2007ea} and later also 
\cite{Paterek:2007va,
Zhou:2008hh,%
Koren:2009jg,
Takacs:2009ug,
Ilin:2010tl,
Gemulla:2011wk}
is of this ``alternating'' gradient descent type, where at each step one of the factors $L$ and $R$ is kept fixed, and the other factor is updated by e.g. a gradient or stochastic gradient step. Therefore, despite working well in many practical applications, all these mentioned methods can get stuck in local minima --- and so are theoretically not well justified, see also the discussion in~\cite{DeCoste:2006wt}. %
 
The same issue also comes up for max-norm optimization, where for example~\cite{Lee:2010wd} optimize over the non-convex factorization~(\ref{eq:maxNormMF}) for bounded max-norm.

Concerning the fixed rank of the factorization,~\cite{Gillis:2010wua} have shown that finding the optimum under a rank constraint (even if the rank is one) is NP-hard (here the used function $f$ was the standard squared error on an incomplete matrix). On the positive side,~\cite{Burer:2003fg} have shown that if the rank $k$ of the factors $L$ and $R$ exceeds the rank of the optimum solution $X^*$, then --- in some cases --- it can be guaranteed that the local minima (or saddle points) are also global minima. However, in nearly all practical applications it is computationally infeasible for the above mentioned methods to optimize with the rank $k$ being in the same order of magnitude as the original matrix size $m$ and $n$ (as e.g. in the Netflix problem, such factors $L,R$ could possibly not even be stored on a single machine\footnote{%
Algorithm~\ref{alg:greedyHazan} in contrast does never need to store a full estimate matrix $X$, but instead just keeps the rank-1 factors $v$ obtained in each step, maintaining a factorized representation of $X$.
}%
).

\paragraph{Relief: Optimizing Over an Equivalent Convex Problem.}
Here we simply overcome this problem by using the transformation to semidefinite matrices, which we have outlined in the above Corollary~\ref{cor:nucPSDeq} and Lemma~\ref{lem:maxPSD}. These bijections of bounded nuclear and max-norm matrices to the PSD matrices over the corresponding natural convex domains do allow us to directly optimize a convex problem, avoiding the factorization problems explained above. We describe this simple trick formally in the next two Subsections~\ref{subsec:transfNuc} and~\ref{subsec:transfMax}.

\paragraph{But what if you really need a Matrix Factorization?}
In some applications (such as for example embeddings or certain collaborative filtering problems) of the above mentioned regularized optimization problems over $f(Z)$, one would still want to obtain the solution (or approximation) $Z$ in a factorized representation, that is $Z=LR^T$.
We note that this is also straight-forward to achieve when using our transformation: An explicit factorization of any feasible solution to the transformed problem~(\ref{eq:optTrEq1}) or~(\ref{eq:optMatInfLe1}) --- if needed --- can always be directly obtained since $X \succeq 0$. 

Alternatively, algorithms for solving the transformed problem~(\ref{eq:optTrEq1}) can directly maintain the approximate solution $X$ in a factorized representation (as a sum of rank-1 matrices), as achieved for example by Algorithms~\ref{alg:greedyHazan} and~\ref{alg:maxDiag}.

\subsubsection{Optimization with a Nuclear Norm Regularization}\label{subsec:transfNuc}
Having Lemma~\ref{lem:nucPSD} at hand, we immediately get to the crucial observation of this section, allowing us to apply Algorithm~\ref{alg:greedyHazan}: %

Any optimization problem over bounded nuclear norm matrices~(\ref{eq:constrainedNuc}) is in fact equivalent to a standard bounded trace semidefinite problem~(\ref{eq:optTrEq1}). The same transformation also holds for problems with a bound on the \emph{weighted} nuclear norm, as given in~(\ref{eq:constrainedWeightedNuc}).

\begin{corollary}\label{cor:nucTrans}
Any nuclear norm regularized problem of the form~(\ref{eq:constrainedNuc}) is equivalent to a bounded trace convex problem of the form~(\ref{eq:optTrEq1}), namely
\begin{equation}
\begin{array}{rl}
   \displaystyle\mini_{X \in \Sym^{(m+n) \times (m+n)}} & \hat f(X)  \\
    s.t. &  \tr(X) = t \ ,\\
         &  X \succeq 0
\end{array}
\end{equation}
where $\hat f$ is defined by $\hat f(X) := f(Z)$ for $Z \in \R^{m\times n}$ being the upper right part of the symmetric matrix $X$. Formally we again think of $X \in \Sym^{(n+m) \times (n+m)}$ as consisting of the four parts $X =:  \small\begin{pmatrix}V & Z  \\  Z^T & W \end{pmatrix}$ with $V \in \Sym^{m \times m}, W \in \Sym^{n \times n}$ and $Z \in \R^{m\times n}$.
\end{corollary}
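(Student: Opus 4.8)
The plan is to derive the claimed equivalence essentially verbatim from the block-matrix characterization of bounded nuclear norm in Corollary~\ref{cor:nucPSDeq}. The key point is that I would invoke the \emph{equality}-trace version (Corollary~\ref{cor:nucPSDeq}), not the inequality version of Lemma~\ref{lem:nucPSD}, precisely because the target problem~(\ref{eq:optTrEq1}) imposes $\tr(X)=t$ exactly; this is the whole reason that corollary was stated separately. First I would record two structural facts about the new objective: the map $X=\big(\begin{smallmatrix} V & Z \\ Z^T & W\end{smallmatrix}\big)\mapsto Z$ extracting the upper-right block is linear, so $\hat f(X):=f(Z)$ is convex (and inherits differentiability) as the composition of $f$ with a linear map; and $\hat f$ depends on $X$ only through $Z$, while the diagonal blocks $V,W$ are otherwise free. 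Together these show that~(\ref{eq:optTrEq1}) with objective $\hat f$ is indeed a convex problem of the required form.

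It then suffices to prove that the two problems realize exactly the same set of objective values, namely
\[
\SetOf{f(Z)}{Z\in\R^{m\times n},\ \nucnorm{Z}\le \tfrac t2}
=\SetOf{\hat f(X)}{X\in\Sym^{(m+n)\times(m+n)},\ X\succeq 0,\ \tr(X)=t}\ .
\]
For the inclusion ``$\subseteq$'', given feasible $Z$ with $\nucnorm{Z}\le t/2$, the $\Rightarrow$-direction of Corollary~\ref{cor:nucPSDeq} produces symmetric $V,W$ with $X=\big(\begin{smallmatrix} V & Z \\ Z^T & W\end{smallmatrix}\big)\succeq 0$ and $\tr(X)=\tr(V)+\tr(W)=t$, so $X$ is feasible for~(\ref{eq:optTrEq1}) and $\hat f(X)=f(Z)$. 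For ``$\supseteq$'', any feasible $X\succeq 0$ with $\tr(X)=t$ has blocks satisfying $\tr(V)+\tr(W)=t$, so the $\Leftarrow$-direction of Corollary~\ref{cor:nucPSDeq} gives $\nucnorm{Z}\le t/2$ for its upper-right block $Z$, which is then feasible for~(\ref{eq:constrainedNuc}) with $f(Z)=\hat f(X)$. Equality of the two value sets immediately yields equality of the two minima, and the explicit construction in each direction carries a minimizer of one problem to a minimizer of the other.

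The only genuinely delicate point is the degenerate case $Z=0$, which is excluded from the hypotheses of Corollary~\ref{cor:nucPSDeq}. Here $\nucnorm{0}=0\le t/2$ holds trivially, and I would simply check by hand that $X:=t\,\unit_1\unit_1^T\succeq 0$ has trace $t$, vanishing off-diagonal block, and thus $\hat f(X)=f(0)$, so the correspondence extends to $Z=0$ as well. Apart from this edge case I expect no real obstacle, since the statement is in essence a reformulation of Corollary~\ref{cor:nucPSDeq}; the main thing to get right is using the equality-trace characterization and observing that the auxiliary blocks $V,W$ never enter the objective $\hat f$.
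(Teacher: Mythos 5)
Your proposal is correct and follows essentially the same route as the paper, which states Corollary~\ref{cor:nucTrans} as an immediate consequence of the equality-trace characterization in Corollary~\ref{cor:nucPSDeq} (that corollary was stated separately precisely for this purpose), with ``equivalent'' meaning exactly the feasible-point-to-feasible-point correspondence with equal objective values that you establish. Your explicit treatment of the convexity of $\hat f$ via linearity of the block extraction, and of the degenerate case $Z=\0$ excluded by the non-zero hypothesis of Corollary~\ref{cor:nucPSDeq}, fills in details the paper leaves implicit but introduces no new ideas.
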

Here ``equivalent'' means that for any feasible point of one problem, we have a feasible point of the other problem, attaining the same objective value. %
The only difference to the original formulation~(\ref{eq:optTrEq1}) is that the function argument $X$ needs to be rescaled by $\frac1t$ in order to have unit trace, which however is a very simple operation in practical applications.
Therefore, we can directly apply Hazan's Algorithm~\ref{alg:greedyHazan} for any max-norm regularized problem as follows:

\begin{algorithm}[h!]
  \caption{Nuclear Norm Regularized Solver}
  \label{alg:nucSolv}
\begin{algorithmic}
  \STATE {\bfseries Input:} A convex nuclear norm regularized problem~(\ref{eq:constrainedNuc}), 
  \STATE \hspace{2cm} target accuracy $\varepsilon$
  \STATE {\bfseries Output:} $\varepsilon$-approximate solution for problem~(\ref{eq:constrainedNuc})
  \STATE  {\bfseries 1.} Consider the transformed symmetric problem for $\hat f$,
  \STATE \hspace{5cm} as given by Corollary~\ref{cor:nucTrans}
  \STATE  {\bfseries 2.} Adjust the function $\hat f$ so that it first rescales its argument by $t$
  \STATE {\bfseries 3.} Run Hazan's Algorithm~\ref{alg:greedyHazan} for $\hat f(X)$ over the domain $X \in \Spectahedron$.
\end{algorithmic}
\end{algorithm}

Using our analysis of Algorithm~\ref{alg:greedyHazan} from Section~\ref{subsec:algHazan}, we see that Algorithm~\ref{alg:nucSolv} runs in time \emph{near linear} in the number $N_f$ of non-zero entries of the gradient $\nabla f$. This makes it very attractive in particular for recommender systems applications and matrix completion, where $\nabla f$ is a sparse matrix (same sparsity pattern as the observed entries), which we will discuss in more detail in Section~\ref{sec:applicationMatCompl}.

\begin{corollary}\label{cor:algNuc}
After at most $O\left(\frac1\varepsilon\right)$ many iterations (i.e. approximate eigenvalue computations), Algorithm~\ref{alg:nucSolv} obtains a solution that is $\varepsilon$ close to the optimum of~(\ref{eq:constrainedNuc}). The algorithm requires a total of $\tilde
  O\left(\frac{N_f}{\varepsilon^{1.5}}\right)$ arithmetic operations (with high probability).
\end{corollary}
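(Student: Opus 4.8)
The plan is to obtain the statement purely by composing the transformation of Corollary~\ref{cor:nucTrans} with the analysis of Hazan's Algorithm~\ref{alg:greedyHazan} that we already established in Section~\ref{subsec:algHazan}, and then to verify that the sparsity and curvature quantities entering that analysis carry over to the transformed problem with only constant-factor changes.

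First I would invoke Corollary~\ref{cor:nucTrans} to replace the nuclear norm constrained problem~(\ref{eq:constrainedNuc}) by the equivalent bounded trace problem over $X = \big(\begin{smallmatrix}V & Z \\ Z^T & W\end{smallmatrix}\big) \in \Sym^{(m+n)\times(m+n)}$ with $\tr(X) = t$, $X \succeq 0$, and objective $\hat f(X) := f(Z)$. Dividing the trace constraint through by $t$ --- equivalently, letting $\hat f$ rescale its argument by $t$ as in Step~2 of Algorithm~\ref{alg:nucSolv} --- turns the feasible set into the spectahedron $\Spectahedron$, so that Algorithm~\ref{alg:greedyHazan} applies verbatim. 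Since the transformation preserves objective values on feasible points, an $\varepsilon$-approximate solution of the transformed SDP yields an $\varepsilon$-approximate solution of~(\ref{eq:constrainedNuc}); by Theorem~\ref{thm:algTrace} such a solution is reached after $O(\frac{1}{\varepsilon})$ iterations, each consisting of a single $\textsc{ApproxEV}$ call.

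Next I would transfer the running time bound of Corollary~\ref{cor:algHazanRunningTime}, which reads $\tilde O\big(\frac{N_{\hat f}}{\varepsilon^{1.5}}\big)$, where $N_{\hat f}$ is the number of non-zero entries of $\nabla \hat f(X)$. The key observation is that $\hat f$ depends only on the off-diagonal block $Z$ of $X$; hence $\nabla \hat f(X)$ is supported only on the two blocks occupied by $Z$ and $Z^T$ and vanishes on the diagonal blocks $V,W$. Consequently $N_{\hat f} \le 2 N_f = O(N_f)$, where $N_f$ is the number of entries on which $f$ depends, and substituting gives the claimed $\tilde O\big(\frac{N_f}{\varepsilon^{1.5}}\big)$ total arithmetic operations.

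The main point requiring care --- the only place where the argument is more than mechanical composition --- is to check that the constants hidden in the $O(\cdot)$ notation are genuinely independent of $\varepsilon$ and $n$. This amounts to verifying that the curvature constant $C_{\hat f}$ of the rescaled objective over $\Spectahedron$, as defined in~(\ref{eq:CfTrace}), is finite and depends only on $f$ and the fixed parameter $t$ (the rescaling by $t$ contributes only a constant factor $t^2$), and likewise that the spectral norm bound $L$ used to size the Lanczos iterations in Corollary~\ref{cor:algHazanRunningTime} is bounded over the compact domain. Both follow from the standing assumption that $f$ has bounded curvature, equivalently that $\nabla f$ is Lipschitz over the feasible region; given this, $C_{\hat f}$ and $L$ are absorbed into the asymptotic notation, and the two displayed bounds follow.
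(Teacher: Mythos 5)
Your proposal is correct and follows essentially the same route as the paper, whose proof is just the two-step composition you describe: apply the transformation of Corollary~\ref{cor:nucTrans}, rescale the argument by $\frac1t$, and invoke Corollary~\ref{cor:algHazanRunningTime} on the running time of Hazan's algorithm. The supporting details you add --- that $\nabla \hat f$ is supported on the two off-diagonal blocks so $N_{\hat f} = O(N_f)$, and that the rescaling inflates the curvature only by the fixed factor $t^2$ --- are exactly what the paper establishes elsewhere (in Section~\ref{sec:applicationMatCompl}, including the discussion after Lemma~\ref{lem:cf}), so your more explicit write-up is a faithful elaboration rather than a different argument.
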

\begin{proof}
We use the transformation from Corollary~\ref{cor:nucTrans} and then rescale all matrix entries by $\frac{1}{t}$. Then result then follows from Corollary \vref{cor:algHazanRunningTime} on the running time of Hazan's algorithm.
\end{proof}

The fact that each iteration of our algorithm is computationally very cheap --- consisting only of the computation of an approximate eigenvector --- strongly contrasts the existing ``proximal gradient'' and ``singular value thresholding'' methods \cite{Ganesh:2009dn,Ji:2009vd,Ma:2009kv,Liu:2009wj,Cai:2010vw,Toh:2010vx}, which \emph{in each step} need to compute an entire SVD. 
Such a single incomplete SVD computation (first $k$ singular vectors) amounts to the same computational cost as an entire run of our algorithm (for $k$ steps).
Furthermore, those existing methods which come with a theoretical guarantee, in their analysis assume that all SVDs used during the algorithm are exact, which is not feasible in practice. By contrast, our analysis is rigorous even if the used eigenvectors are only $\varepsilon'$-approximate.

Another nice property of Hazan's method is that the returned solution is guaranteed to be simultaneously of low rank ($k$ after $k$ steps), and that by incrementally adding the rank-1 matrices $v_{k} v_{k}^T$, the algorithm automatically maintains a matrix factorization of the approximate solution. 

Also, Hazan's algorithm, as being an instance of our presented general framework, is designed to automatically stay within the feasible region $\Spectahedron$, where most of the existing methods do need a projection step to get back to the feasible region (as e.g.~\cite{Lin:2007ea,Liu:2009wj}), making both the theoretical analysis and implementation more complicated. 

\subsubsection{Optimization with a Max-Norm Regularization}\label{subsec:transfMax}
The same approach works analogously for the max-norm, by using Lemma~\ref{lem:maxPSD} in order to apply Algorithm~\ref{alg:maxDiag}:

Any optimization problem over bounded max-norm matrices~(\ref{eq:constrainedMax}) is in fact equivalent to a semidefinite problem~(\ref{eq:optMatInfLe1}) over the ``box'' of matrices where each element on the diagonal is bounded above by $t$. We think of this domain as generalizing the positive cube of vectors, to the PSD matrices. %

\begin{corollary}\label{cor:maxTrans}
Any max-norm regularized problem of the form~(\ref{eq:constrainedMax}) is equivalent to a bounded diagonal convex problem of the form~(\ref{eq:optMatInfLe1}), i.e.,
\begin{equation}\label{eq:maxDiag}
\begin{array}{rl}
   \displaystyle\mini_{X \in \Sym^{(m+n) \times (m+n)}} & \hat f(X)  \\
    s.t. &  X_{ii} \le 1 ~~~\forall i ,\\
         &  X \succeq 0
\end{array}
\end{equation}
where $\hat f$ is defined by $\hat f(X) := f(Z)$ for $Z \in \R^{m\times n}$ being the upper right part of the symmetric matrix $X$. Formally we again think of any $X \in \Sym^{(n+m) \times (n+m)}$ as consisting of the four parts $X =:  \small\begin{pmatrix}V & Z  \\  Z^T & W \end{pmatrix}$ with $V \in \Sym^{m \times m}, W \in \Sym^{n \times n}$ and $Z \in \R^{m\times n}$.
\end{corollary}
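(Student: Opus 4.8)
The plan is to deduce this corollary directly from the max-norm factorization characterization of Lemma~\ref{lem:maxPSD}, in exactly the same spirit in which the nuclear-norm Corollary~\ref{cor:nucTrans} follows from Lemma~\ref{lem:nucPSD}. The only genuine content is to exhibit, for every feasible point of one problem, a feasible point of the other attaining the same objective value; everything else is bookkeeping around a single scaling factor of $t$. Since $f$ is convex and the extraction of the upper right block is linear, the composed $\hat f$ will automatically be convex, so the transformed problem is genuinely of the form~(\ref{eq:optMatInfLe1}) and Algorithm~\ref{alg:maxDiag} applies.

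First I would treat the forward direction. Given any $Z$ feasible for~(\ref{eq:constrainedMax}), i.e. $\maxnorm{Z} \le t$, the ``$\Rightarrow$'' part of Lemma~\ref{lem:maxPSD} produces symmetric matrices $V,W$ with $M := \left(\begin{smallmatrix} V & Z \\ Z^T & W \end{smallmatrix}\right) \succeq 0$ and all diagonal entries of $M$ bounded by $t$. Setting $X := \frac1t M$ then gives $X \succeq 0$ with $X_{ii} \le 1$ for all $i$, so $X$ is feasible for~(\ref{eq:maxDiag}); note that the upper right block of $X$ is precisely $\frac1t Z$. Conversely, given $X \succeq 0$ with $X_{ii}\le 1$, the matrix $tX$ is PSD with diagonal entries at most $t$, and its upper right block equals $t\,Z_X$, where $Z_X$ denotes the upper right block of $X$. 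The ``$\Leftarrow$'' part of Lemma~\ref{lem:maxPSD} then yields $\maxnorm{t\,Z_X} \le t$, so $t\,Z_X$ is feasible for~(\ref{eq:constrainedMax}). This establishes the claimed correspondence between the two feasible sets: $Z$ is matched with $X$ whenever the upper right block of $X$ equals $\frac1t Z$, with the trivial matrix $Z=0$ handled by $X=0$.

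The one point requiring care is matching the objective values, and this is exactly where the rescaling of $\hat f$ enters. Because the correspondence sends $Z$ to an $X$ whose upper right block is $\frac1t Z$ rather than $Z$ itself, the function $\hat f$ must first rescale its extracted block by $t$ before applying $f$ --- precisely the adjustment already flagged for the nuclear-norm case in the discussion after Corollary~\ref{cor:nucTrans} and in step~2 of Algorithm~\ref{alg:nucSolv}. With this convention $\hat f(X) = f\big(t\,Z_X\big) = f(Z)$, so the two problems share the same optimal value and their minimizers are in bijection, which is what ``equivalent'' means here. I do not expect any real obstacle: the whole argument is Lemma~\ref{lem:maxPSD} read at the level of feasible sets, and the rescaling by $\frac1t$ that converts the diagonal bound $t$ into the normalized bound $1$ is the only nontrivial (but entirely routine) piece of bookkeeping.
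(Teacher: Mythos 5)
Your proposal is correct and is exactly the argument the paper intends: the corollary is stated without a formal proof precisely because it is Lemma~\ref{lem:maxPSD} applied to both directions of the feasibility correspondence, combined with the $\frac1t$ rescaling that the paper flags in the discussion following the corollary and in step~2 of Algorithm~\ref{alg:maxSolv}. Your additional observations (convexity of $\hat f$ via composition with the linear block extraction, and the trivial handling of $Z=0$, which Lemma~\ref{lem:maxPSD} excludes) are correct refinements of the same route, so there is nothing to fix.
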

Again the only difference to the original formulation~(\ref{eq:optMatInfLe1}) is that the function argument $X$ needs to be rescaled by $\frac1t$ in order to have the diagonal bounded by one, which however is a very simple operation in practical applications. 
This means we can directly apply Algorithm~\ref{alg:maxDiag} for any max-norm regularized problem as follows:

\begin{algorithm}[h!]
  \caption{Max-Norm Regularized Solver}
  \label{alg:maxSolv}
\begin{algorithmic}
  \STATE {\bfseries Input:} A convex max-norm regularized problem~(\ref{eq:constrainedMax}), 
  \STATE \hspace{2cm} target accuracy $\varepsilon$
  \STATE {\bfseries Output:} $\varepsilon$-approximate solution for problem~(\ref{eq:constrainedMax})
  \STATE  {\bfseries 1.} Consider the transformed symmetric problem for $\hat f$,
  \STATE \hspace{5cm} as given by Corollary~\ref{cor:maxTrans}
  \STATE  {\bfseries 2.} Adjust the function $\hat f$ so that it first rescales its argument by $t$
  \STATE {\bfseries 3.} Run Algorithm~\ref{alg:maxDiag} for $\hat f(X)$ over the domain $X \in \MaxCutPolytope$.
\end{algorithmic}
\end{algorithm}

Using the analysis of our new Algorithm~\ref{alg:maxDiag} from Section~\ref{subsec:algHazan}, we obtain the following guarantee:

\begin{corollary}\label{cor:algMax}
After $\left\lceil\frac{8C_{f}}{\varepsilon}\right\rceil$ many iterations, Algorithm~\ref{alg:maxSolv} obtains a solution that is $\varepsilon$ close to the optimum of~(\ref{eq:constrainedMax}).
\end{corollary}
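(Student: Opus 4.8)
The plan is to derive this directly from the equivalence established in Corollary~\ref{cor:maxTrans} together with the primal convergence guarantee of Theorem~\ref{thm:algMaxDiag}. First I would invoke Corollary~\ref{cor:maxTrans} to translate the max-norm regularized problem~(\ref{eq:constrainedMax}) into the bounded-diagonal semidefinite problem~(\ref{eq:optMatInfLe1}) for the lifted objective $\hat f(X) := f(Z)$, where $Z$ is the upper-right block of the symmetric matrix $X =: \small\begin{pmatrix}V & Z \\ Z^T & W\end{pmatrix}$. Since this correspondence is a bijection between feasible points that preserves objective values, an $\varepsilon$-small primal error for $\hat f$ on the transformed domain immediately yields the same primal error for $f$ on the original max-norm domain.

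Next I would account for the rescaling step (step 2 of Algorithm~\ref{alg:maxSolv}): after dividing the argument by $t$, the constraint $X_{ii} \le t$ becomes $X_{ii} \le 1$, so the feasible set is exactly $\MaxCutPolytope$, and Algorithm~\ref{alg:maxDiag} applies verbatim. The curvature constant $C_f$ appearing in the statement is then understood as the curvature~(\ref{eq:CfMaxDiag}) of the (rescaled) lifted objective $\hat f$ over $\MaxCutPolytope$, and the internal oracle $\approxLin(\cdot,\MaxCutPolytope,\cdot)$ is exactly the one analyzed in Section~\ref{sec:algMaxDiag}.

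Finally I would apply the primal bound of Theorem~\ref{thm:algMaxDiag}, namely $\hat f(X^{(k)}) - \hat f(X^*) \le \frac{8C_f}{k+2}$. Setting $k = K := \left\lceil\frac{8C_f}{\varepsilon}\right\rceil$ gives $\frac{8C_f}{K+2} \le \frac{8C_f}{K} \le \varepsilon$, using $K \ge \frac{8C_f}{\varepsilon}$. Translating back through the equivalence of Corollary~\ref{cor:maxTrans} shows that the returned matrix $Z$ satisfies $f(Z) - f(Z^*) \le \varepsilon$, which is precisely the asserted $\varepsilon$-closeness to the optimum of~(\ref{eq:constrainedMax}).

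There is essentially no hard step here, as the substance has already been carried out in Theorem~\ref{thm:algMaxDiag} (itself a corollary of the general Theorems~\ref{thm:primalGreedy} and~\ref{thm:primalDualGreedy}) and in the norm-to-SDP reduction of Lemma~\ref{lem:maxPSD} and Corollary~\ref{cor:maxTrans}. The only points demanding care are confirming that the reduction is objective-value-preserving in \emph{both} directions, so that optimality transfers across the transformation, and checking that the rescaling does not alter admissibility of $\approxLin$ nor the validity of the curvature hypothesis, so that the assumptions of Theorem~\ref{thm:algMaxDiag} are genuinely met by the lifted problem.
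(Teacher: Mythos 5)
Your proposal is correct and follows essentially the same route as the paper: transform via Corollary~\ref{cor:maxTrans}, rescale the argument by $\frac1t$ so the domain becomes $\MaxCutPolytope$, and invoke the primal bound of Theorem~\ref{thm:algMaxDiag} with $k=\left\lceil\frac{8C_f}{\varepsilon}\right\rceil$. The paper's proof is just a terser statement of exactly these steps, and your added care about the objective-value-preserving equivalence and about $C_f$ being the curvature of the rescaled lifted objective $\hat f$ is consistent with (indeed makes explicit) what the paper intends.
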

\begin{proof}
We use the transformation from Corollary~\ref{cor:maxTrans} and then rescale all matrix entries by $\frac{1}{t}$. Then the running time of the algorithm follows from Theorem~\ref{thm:algMaxDiag}.
\end{proof}

\paragraph{Maximum Margin Matrix Factorizations.}
In the case of matrix completion, the ``loss'' function~$f$ is defined as measuring the error from $X$ to some fixed observed matrix, but just at a small fixed set of ``observed'' positions of the matrices. As we already mentioned, semidefinite optimization over $X$ as above can always be interpreted as finding a \emph{matrix factorization}, as a symmetric PSD matrix $X$ always has a (unique) Cholesky factorization.

Now for the setting of matrix completion, it is known that the above described optimization task under bounded max-norm, can be geometrically interpreted as learning a maximum margin separating hyperplane for each user/movie. %
In other words the factorization problem decomposes into a collection of SVMs, one for each user or movie, if we think of the corresponding other factor to be fixed for a moment~\cite{Srebro:2004tj}. We will discuss matrix completion in more detail in Section~\ref{sec:applicationMatCompl}.

\paragraph{Other Applications of Max-Norm Optimization.}
Apart from matrix completion, optimization problems employing the max-norm have other prominent applications in spectral methods, spectral graph properties, low-rank recovery, and combinatorial problems such as Max-Cut.

\subsection{Applications}\label{sec:applicationMatCompl}

Our Algorithm~\ref{alg:nucSolv} directly applies to arbitrary nuclear norm regularized problems of the form~(\ref{eq:constrainedNuc}). Since the nuclear norm is in a sense the most natural generalization of the sparsity-inducing $\ell_1$-norm to the case of low rank matrices  (see also the discussion in the previous chapters) there are many applications of this class of optimization problems.

\subsubsection{Robust Principal Component Analysis}
One prominent example of a nuclear norm regularized problem in the area of dimensionality reduction is given by the technique of \emph{robust PCA} as introduced by~\cite{Candes:2011bd}, also called principal component pursuit, which is the optimization task
\begin{equation}\label{eq:robustPCA}
  \min_{Z \in \R^{m \times n}} \nucnorm{Z} + \mu \norm{M-Z}_1  \ .
\end{equation}
Here $M\in \R^{m \times n}$ is the given data matrix, and $\norm{.}_1$ denotes the entry-wise $\ell_1$-norm.
By considering the equivalent constrained variant $\nucnorm{Z}\le \frac t2$ instead, we obtain a problem the form~(\ref{eq:constrainedNuc}), suitable for our Algorithm~\ref{alg:nucSolv}. 
However, since the original objective function $f(Z) = \norm{M-Z}_1$ is not differentiable, a smoothed version of the $\ell_1$-norm has to be used instead. This situation is analogous to the hinge-loss objective in maximum margin matrix factorization~\cite{Srebro:2004tj}.

Existing algorithms for robust PCA do usually require a complete (and exact) SVD in each iteration, as e.g.~\cite{Toh:2010vx,Aybat:2011vd}, and are often harder to analyze compared to our approach.
The first algorithm with a convergence guarantee of $O\big(\frac1\varepsilon\big)$ was given by~\cite{Aybat:2011vd}, requiring a SVD computation per step. Our Algorithm~\ref{alg:nucSolv} obtains the same guarantee in the same order of steps, but only requires a single approximate eigenvector computation per step, which is significantly cheaper.

Last but not least, the fact that our algorithm delivers approximate solutions to~(\ref{eq:robustPCA}) of rank $O\left(\frac1\varepsilon\right)$ will be interesting for practical dimensionality reduction applications, as it re-introduces the important concept of low-rank factorizations as in classical PCA. In other words our algorithm produces an embedding into at most $O\left(\frac1\varepsilon\right)$ many new dimensions, which is much easier to deal with in practice compared to the full rank $n$ solutions resulting from the existing solvers for robust PCA, see e.g.~\cite{Candes:2011bd} and the references therein.

We did not yet perform practical experiments for robust PCA, but chose to demonstrate the practical performance of Algorithm~\ref{alg:greedyHazan} for matrix completion problems first.

\subsubsection{Matrix Completion and Low Norm Matrix Factorizations}

For matrix completion problems as for example in collaborative filtering and recommender systems~\cite{Koren:2009jg}, our algorithm is particularly suitable as it retains the sparsity of the observations, and constructs the solution in a factorized way. In the setting of a partially observed matrix such as in the Netflix case, the loss function $f(X)$ only depends on the observed positions, which are very sparse, so $\nabla f(X)$ --- which is all we need for our algorithm --- is also sparse.

We want to approximate a partially given matrix $Y$ (let $\Omega$ be the set of known training entries of the matrix) by a product $Z=LR^T$ such that some convex loss function $f(Z)$ is minimized. By $\Omega_{test}$ we denote the unknown test entries of the matrix we want to predict. 

\paragraph{Complexity.}
Just recently it has been shown that the standard low-rank matrix completion problem --- that is finding the best approximation to an incomplete matrix by the standard $\ell_2$-norm --- is an NP-hard problem, if the rank of the approximation is constrained. The hardness is claimed to hold even for the rank 1 case~\cite{Gillis:2010wua}.
In the light of this hardness result, the advantage of relaxing the rank by replacing it by the nuclear norm (or max-norm) is even more evident.
Our near linear time Algorithm~\ref{alg:nucSolv} relies on a convex optimization formulation and does indeed deliver an guaranteed $\varepsilon$-accurate solution for the nuclear norm regularization, for arbitrary $\varepsilon>0$. 
Such a guarantee is lacking for the ``alternating'' descent heuristics such as~\cite{Rennie:2005vn,Lin:2007ea,Paterek:2007va,Zhou:2008hh,Koren:2009jg,Takacs:2009ug,Ilin:2010tl,Gemulla:2011wk,Salakhutdinov:2010tpa,Lee:2010wd,Recht:2011wv} (which build upon the non-convex factorized versions~(\ref{eq:nucNormMF}) and~(\ref{eq:maxNormMF}) while constraining the rank of the used factors~$L$ and~$R$).

\paragraph{Different Regularizations.}
Regularization by the weighted nuclear norm is observed by~\cite{Salakhutdinov:2010tpa} to provide better generalization performance than the classical nuclear norm. As it can be simply reduced to the nuclear norm, see Section~\ref{sec:weightedNucNorm}, our Algorithm~\ref{alg:nucSolv} can directly be applied in the weighted case as well.
On the other hand, experimental evidence also shows that the max-norm sometimes provides better generalization performance than the nuclear norm~\cite{Srebro:2004tj,Lee:2010wd}. For any convex loss function, our Algorithm~\ref{alg:maxSolv} solves the corresponding max-norm regularized matrix completion task.

\paragraph{Different Loss Functions.}
Our method applies to any convex loss function on a low norm matrix factorization problem, and we will only mention two loss functions in particular:

\emph{Maximum Margin Matrix Factorization} (MMMF)~\cite{Srebro:2004tj} can directly be solved by our Algorithm~\ref{alg:nucSolv}. Here the original (soft margin) formulation is the trade-off formulation~(\ref{eq:regNucLagrange}) with $f(Z) := \sum_{ij \in \Omega} \abs{Z_{ij} - y_{ij}}$ being the hinge or $\ell_{1}$-loss. Because this function is not differentiable, the authors recommend using the differentiable smoothed hinge loss instead.

When using the standard squared loss function $f(Z) := %
\sum_{ij \in \Omega} (Z_{ij} - y_{ij})^{2}$, the problem is known as \emph{Regularized Matrix Factorization}~\cite{Wu:2007tm}, and both our algorithms directly apply. This loss function is widely used in practice, has a very simple gradient, and is the natural matrix generalization of the $\ell_{2}$-loss (notice the analogous Lasso and regularized least squares formulation). The same function is known as the rooted mean squared error, %
which was the quality measure used in the Netflix competition. We write
$\textit{RMSE}_{train}$ and $\textit{RMSE}_{test}$
for the rooted error on the training ratings $\Omega$ and test ratings $\Omega_{test}$ respectively.

\paragraph{Running time and memory.}
From Corollary~\ref{cor:algNuc} we have that the running time of our nuclear norm optimization Algorithm~\ref{alg:nucSolv} is linear in the size of the input: Each matrix-vector multiplication in Lanczos' or the power method exactly costs $\abs{\Omega}$ (the number of observed positions of the matrix) operations, and we know that in total we need at most $O\left(1/\varepsilon^{1.5}\right)$ many such matrix-vector multiplications.

Also the memory requirements are very small: Either we store the entire factorization of $X^{(k)}$ (meaning the $O\left(\frac1\varepsilon\right)$ many vectors $v^{(k)}$) --- which is still much smaller than the full matrix $X$ --- or then instead we can only update and store the prediction values $X^{(k)}_{ij}$ for $ij \in \Omega \cup \Omega_{test}$ in each step. This, together with the known ratings $y_{ij}$ determines the sparse gradient matrix $\nabla f(X^{(k)})$ during the algorithm. Therefore, the total memory requirement is only $\abs{\Omega \cup \Omega_{test}}$ (the size of the output) plus the size $(n+m)$ of a single feature vector $v$.

\paragraph{The constant $C_{f}$ in the running time of Algorithm~\ref{alg:greedyHazan}.}
One might ask if the constant hidden in the $O(\frac1\varepsilon)$ number of iterations is indeed controllable. Here we show that for the standard squared error on any fixed set of observed entries $\Omega$, this is indeed the case. For more details on the constant $C_f$, we refer the reader to Sections~\ref{subsec:Curvature} and~\ref{subsec:algHazan}.
\begin{lemma}\label{lem:cf}
For the squared error 
$f(Z) = \frac{1}{2} \sum_{ij \in \Omega} (Z_{ij} - y_{ij})^{2}$ over the spectahedron $\Spectahedron$, it holds that 
$C_{\hat f} \le 1.$
\vspace{-0.3\baselineskip}
\end{lemma}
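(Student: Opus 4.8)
The plan is to evaluate the curvature constant $C_{\hat f}$ directly from its definition~(\ref{eq:CfTrace}) over the spectahedron, exploiting that $\hat f$ is a convex \emph{quadratic} in $X$ that depends only on the upper right block $Z$. Writing $X = \left(\begin{smallmatrix} V & Z \\ Z^T & W\end{smallmatrix}\right)$, the loss reads $\hat f(X) = \frac12\sum_{ij\in\Omega}(X_{i,m+j}-y_{ij})^2$, so the Bregman divergence appearing in the numerator of $C_{\hat f}$ collapses to its pure quadratic part. First I would verify, using that the gradient in the space of symmetric matrices splits the weight $\frac12$ across each of the two mirrored off-diagonal entries $X_{i,m+j}$ and $X_{m+j,i}$, that for any symmetric $X,Y$
\[
\hat f(Y) - \hat f(X) - (Y-X)\bullet \nabla \hat f(X) = \tfrac12 \sum_{ij\in\Omega}\big(Y_{i,m+j} - X_{i,m+j}\big)^2 .
\]
This is the one computation needing care: the linear term $(Y-X)\bullet\nabla\hat f(X)$ must be shown to cancel exactly the cross term $\sum_{ij\in\Omega}(X_{i,m+j}-y_{ij})(Y-X)_{i,m+j}$, which is precisely where the factor-$\frac12$ convention on symmetric off-diagonal entries enters. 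Apart from this bookkeeping, the identity is just the standard fact that the Bregman divergence of a quadratic is its Hessian form evaluated on $Y-X$.

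Next I would substitute the algorithm's step $Y = X + \alpha(V-X)$, so that $(Y-X)_{i,m+j} = \alpha(V-X)_{i,m+j}$ and the prefactor $\frac1{\alpha^2}$ in~(\ref{eq:CfTrace}) cancels, leaving
\[
C_{\hat f} = \sup_{X,V\in\Spectahedron}\ \tfrac12\sum_{ij\in\Omega}\big((V-X)_{i,m+j}\big)^2 .
\]
To finish, I would bound the sum over the observed positions $\Omega$ by the sum over all entries of the upper right block, and that in turn by the full Frobenius norm, namely $\sum_{ij\in\Omega}\big((V-X)_{i,m+j}\big)^2 \le \frobnorm{V-X}^2$. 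Invoking the diameter bound $\frobnorm{V-X}^2 \le \diam_{Fro}(\Spectahedron)^2 = 2$ from Lemma~\ref{lem:frobDiamSpect} (valid since $V,X\in\Spectahedron$) then gives $C_{\hat f}\le \frac12\cdot 2 = 1$, as claimed.

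The main obstacle is genuinely only the first identity: getting the gradient and inner-product bookkeeping on symmetric matrices right so that the divergence reduces cleanly to a sum of squared entry-differences over $\Omega$. As an alternative route that sidesteps the explicit gradient, I could instead apply Lemma~\ref{lem:CfHessBound}: since $\hat f$ is quadratic, its Hessian form in any direction $D$ equals $\sum_{ij\in\Omega}(D_{i,m+j})^2 \le \frobnorm{D}^2$, whence $\sup_{z}\lmax\big(\nabla^2\hat f(z)\big)\le 1$, and then $C_{\hat f}\le \frac12\diam_{Fro}(\Spectahedron)^2\cdot 1 = 1$ immediately.
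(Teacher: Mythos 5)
Your proposal is correct, and in fact it contains two proofs: the ``alternative route'' you sketch at the end --- Lemma~\ref{lem:CfHessBound} combined with the diameter bound $\diam_{Fro}(\Spectahedron)^2=2$ from Lemma~\ref{lem:frobDiamSpect} --- is precisely the paper's proof, which observes that the Hessian of $\hat f$ (viewed as a function on vectorized matrices) is diagonal with entries $1$ at the positions in $\Omega$ and $0$ elsewhere, so that $\lmax(\nabla^2\hat f)\le 1$ and $C_{\hat f}\le\frac12\cdot2\cdot1=1$. Your primary route is genuinely different and more elementary: since $\hat f$ is quadratic, you compute the Bregman divergence exactly as $\frac12\sum_{ij\in\Omega}\bigl(Y_{i,m+j}-X_{i,m+j}\bigr)^2$, cancel the $\frac{1}{\alpha^2}$ against the step $Y=X+\alpha(V-X)$, and bound by the Frobenius diameter --- no Hessian or Cauchy--Schwarz step needed. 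This buys two things. First, it makes explicit a point the paper's proof glosses over: the statement that the Hessian entries ``are $1$ at the entries corresponding to $\Omega$'' silently depends on the one-sided convention for representing $\hat f$ on the full matrix space, whereas your $\frac12$-splitting of the gradient across the two mirrored off-diagonal entries handles the symmetric-matrix bookkeeping cleanly (and, as you correctly note, the resulting divergence is convention-independent). Second, your route actually proves slightly more than claimed: since each observed entry of $V-X$ appears \emph{twice} in $\frobnorm{V-X}^2$ (at $(i,m+j)$ and its mirror $(m+j,i)$), one has $\sum_{ij\in\Omega}\bigl((V-X)_{i,m+j}\bigr)^2\le\frac12\frobnorm{V-X}^2$, which sharpens the conclusion to $C_{\hat f}\le\frac12$; your looser bound $\le\frobnorm{V-X}^2$ still gives the claimed $C_{\hat f}\le1$, matching the paper.
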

\begin{proof}
In Lemma~\ref{lem:CfHessBound}, we have seen that the constant $C_{\hat f}$ is upper bounded by half the diameter of the domain, times the largest eigenvalue of the Hessian
$\nabla^2 \hat f(\vec X)$. Here we consider $\hat f$ as a function on vectors $\vec X \in \R^{n^2}$ corresponding to the matrices $X\in\Sym^{n\times n}$.
However for the squared error as in our case here, the Hessian will be a diagonal matrix. One can directly compute that the diagonal entries of $\nabla^2 \hat f(\vec X)$ are $1$ at the entries corresponding to $\Omega$, and zero everywhere else.
Furthermore, the squared diameter of the spectahedron is upper bounded by $2$, as we have shown in Lemma~\ref{lem:frobDiamSpect}.
Therefore $C_{\hat f} \le 1$ for the domain $\Spectahedron$.
\end{proof}

If the domain is the scaled spectahedron $t\cdot \Spectahedron$ as used in our Algorithm~\ref{alg:nucSolv}, then the squared diameter of the domain is $2t^2$, compare to Lemma~\ref{lem:frobDiamSpect}.
This means that the curvature is upper bounded by $C_{\hat f} \le t^2$ in this case. Alternatively, the same bound for the curvature of $\tilde f(X):=\hat f(tX)$ can be obtained along the same lines as for the spectahedron domain in the previous lemma, and the same factor of $t^2$ will be the scaling factor of the Hessian, resulting from the chain-rule for taking derivatives.

\subsubsection{The Structure of the Resulting Eigenvalue Problems}

For the actual computation of the approximate largest eigenvector in Algorithm~\ref{alg:greedyHazan}, i.e. the internal procedure $\textsc{ApproxEV}\left(-\nabla \hat f (X^{(k)}), \frac{2C_{\hat f}}{k+2} \right)$, either Lanczos' method or the power method (as in PageRank, see e.g.~\cite{Berkhin:2005ww}) can be used.
In our Theorem~\ref{thm:powerLanczos} of Section~\ref{subsec:algHazan}, we stated that both the power method as well as Lanczos' algorithm do provably obtain the required approximation quality in a bounded number of steps if the matrix is PSD, with high probability, see also~\cite{Kuczynski:1992va,Arora:2005uh}. 

Both methods are known to scale well to very large problems and can be parallelized easily, as each iteration consists of just one matrix-vector multiplication.
However, we have to be careful that we obtain the eigenvector for the \emph{largest} eigenvalue which is not necessarily the principal one (largest in absolute value). In that case the spectrum can be shifted by adding an appropriate constant to the diagonal of the matrix. 

For arbitrary loss function $f$, the gradient $-\nabla \hat f(X)$, which is the matrix whose largest eigenvector we have to compute in the algorithm, is always a symmetric matrix of the block form $\nabla \hat f(X) = \begin{pmatrix} 0 &\!\! G \\ G^T &\!\!  0 \end{pmatrix}$ for $G = \nabla f(Z)$, when $X =:  \small\begin{pmatrix}V & Z  \\  Z^T & W \end{pmatrix}$.
In other words $\nabla \hat f(X)$ is the adjacency matrix of a weighted \emph{bipartite graph}. One vertex class corresponds to the $n$ rows of the original matrix $X_{2}$ (\emph{users} in recommender systems), the other class corresponds to the $m$ columns (\emph{items} or \emph{movies}). It is easy to see that the spectrum of $\nabla \hat f$ is always \emph{symmetric}: Whenever $(\begin{smallmatrix} v \\ w \end{smallmatrix})$ is an eigenvector for some eigenvalue $\lambda$, then $(\begin{smallmatrix} v \\ -w \end{smallmatrix})$ is an eigenvector for $-\lambda$. 

Hence, we have exactly the same setting as in the established Hubs and Authorities (HITS) model~\cite{Kleinberg:1999wa}. The first part of any eigenvector is always an eigenvector of the hub matrix $G^T G$, and the second part is an eigenvector of the authority matrix $G G^T$. 

\paragraph{Repeated squaring.} In the special case that the matrix $G$ is very rectangular ($n \ll m$ or $n\gg m$), one of the two square matrices $G^T G$ or $G G^T$ is very small. Then it is known that %
one can obtain an exponential speed-up in the power method by repeatedly squaring the smaller one of the matrices, analogously to the ``square and multiply''-approach for computing large integer powers of real numbers. In other words we can perform $O(\log \frac{1}{\varepsilon})$ many \emph{matrix-matrix} multiplications instead of $O(\frac{1}{\varepsilon})$ \emph{matrix-vector} multiplications. %

\subsubsection{Relation to Simon Funk's SVD Method}\label{sec:simonFunk}
Interestingly, our proposed framework can also be seen as a theoretically justified variant of Simon Funk's~\cite{Webb:2006vm} and related approximate SVD methods, which were used as a building block by most of the teams participating in the Netflix competition (including the winner team). Those methods have been further investigated by~\cite{Paterek:2007va,Takacs:2009ug} and also~\cite{Kurucz:2007wr}, which already proposed a heuristic using the HITS formulation.
These approaches are algorithmically extremely similar to our method, although they are aimed at a slightly different optimization problem, and do \emph{not} directly guarantee bounded nuclear norm. Recently,~\cite{Salakhutdinov:2010tpa} observed that Funk's algorithm can be seen as stochastic gradient descent to optimize~(\ref{eq:regNucLagrange}) when the regularization term is replaced by a \emph{weighted} variant of the nuclear norm.

Simon Funk's method considers the standard squared loss function $\hat f(X) = \frac{1}{2} \sum_{ij \in S} (X_{ij} - y_{ij})^{2}$, and finds the new rank-1 estimate (or feature) $v$ by iterating $v := v + \lambda (-\nabla \hat f(X) v - K v)$, or equivalently 
\begin{equation}\label{eq:simonFunk}
v := \lambda \left(-\nabla \hat f(X) + \left(\frac{1}{\lambda} - K \right) \id \right)  v \ ,
\end{equation}
a fixed number of times. Here $\lambda$ is a small fixed constant called the learning rate. Additionally a decay rate $K > 0$ is used for regularization, i.e. to penalize the magnitude of the resulting feature~$v$. This matrix-vector multiplication formulation~(\ref{eq:simonFunk}) is equivalent to a step of the power method applied within our framework\footnote{%
Another difference of our method to Simon Funk's lies in the stochastic gradient descent type of the latter, i.e. ``immediate feedback'':
During each matrix multiplication, it already takes the modified current feature $v$ into account when calculating the loss $\hat f(Z)$, whereas our Algorithm~\ref{alg:greedyHazan} alters $Z$ only after the eigenvector computation is finished.
}%
, and for small enough learning rates $\lambda$ the resulting feature vector will converge to the largest eigenvector of $-\nabla \hat f(Z)$.

However in Funk's method, the magnitude of each new feature strongly depends on the starting vector $v_{0}$, the number of iterations, the learning rate $\lambda$ as well as the decay $K$, making the convergence very sensitive to these parameters. This might be one of the reasons that so far no results on the convergence speed could be obtained. Our method is free of these parameters, the $k$-th new feature vector is always a unit vector scaled by $\frac{1}{\sqrt{k}}$. Also, we keep the Frobenius norm $\frobnorm{U}^2+\frobnorm{V}^2$ of the obtained factorization exactly fixed during the algorithm, whereas in Funk's method --- which has a different optimization objective --- this norm strictly increases with every newly added feature.

Our described framework therefore gives theoretically justified variant of the experimentally successful method~\cite{Webb:2006vm} and its related variants such as~\cite{Kurucz:2007wr,Paterek:2007va,Takacs:2009ug}.

\subsection{Experimental Results}

We run our algorithm for the following standard datasets%
\footnote{See \href{http://www.grouplens.org}{www.grouplens.org} and \href{http://archive.ics.uci.edu/ml/datasets/Netflix+Prize}{archive.ics.uci.edu/ml}.%
} for matrix completion problems, using the squared error function.

\begin{small}
\begin{center}
\begin{tabular}{|l|r|r|r|}
\hline					
 \textit{dataset} & \textit{\#ratings} & $n$ & $m$  \\
\hline			
\textsf{MovieLens 100k} &
$10^5$ & $943$ & $1682$\\
\textsf{MovieLens 1M} &
$10^6$ & $6040$ & $3706$\\
\textsf{MovieLens 10M}%
 &
$10^7$ & $69878$ & $10677$\\
\textsf{Netflix} &
$10^8$ & $480189$ & $17770$\\
\hline
\end{tabular}
\end{center}
\end{small}

Any eigenvector method can be used as a black-box in our algorithm. To keep the experiments simple, we used the power method%
\footnote{We used the power method starting with the uniform unit vector. $\frac{1}{2}$ of the approximate eigenvalue corresponding to the previously obtained feature $v_{k-1}$ was added to the matrix diagonal to ensure good convergence.%
}, and performed $0.2 \cdot k$ power iterations in step $k$. If not stated otherwise, the only optimization we used is the improvement by averaging the old and new gradient as explained in Section~\ref{subsec:imprAlgos}.
All results were obtained by our (single-thread) implementation in \textsf{Java 6} on a 2.4 GHz \textsf{Intel C2D} laptop.

\paragraph{Sensitivity.}
The generalization performance of our method is relatively stable under changes of the regularization parameter, see Figure~\ref{fig:sensitivity}:
\begin{figure}[ht]
\vspace{-0.2em}
\begin{center}
\includegraphics[width=0.5\columnwidth]{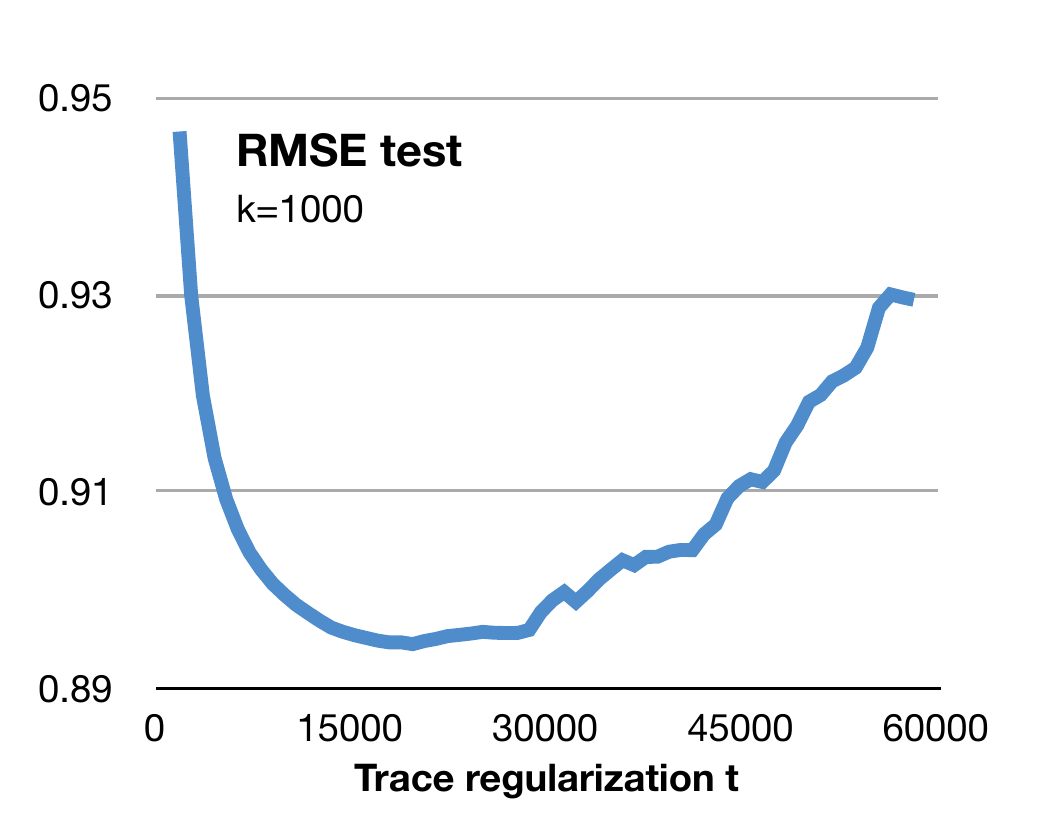}\vspace{-0.2em}
\caption{Sensitivity of the method on the choice of
the regularization parameter $t$ in~(\ref{eq:constrainedNuc}), on \textsf{MovieLens 1M}.}
\label{fig:sensitivity}
\end{center}
\vspace{-1em}
\end{figure}

\paragraph{Movielens.}
Table~\ref{tbl:movielens} reports the running times of our algorithm on the three \textsf{MovieLens} datasets. Our algorithm gives an about 5.6 fold speed increase over the reported timings by~\cite{Toh:2010vx}, which is a very similar method to~\cite{Ji:2009vd}.~\cite{Toh:2010vx} already improves the ``singular value thresholding'' methods~\cite{Cai:2010vw} and~\cite{Ma:2009kv}.
For MMMF,~\cite{Rennie:2005vn} report an optimization time of about
$5$ hours on the \textsf{1M} dataset, but use the different smoothed
hinge loss function so that the results cannot be directly compared. \cite{Ma:2009kv},~\cite{Srebro:2003ui} and~\cite{Ji:2009vd} only obtained results on much smaller datasets.

\begin{table}[ht]
\caption{Running times $t_{\text{our}}$ (in seconds) of our algorithm on the three \textsf{MovieLens} datasets compared to the reported timings $t_{\text{TY}}$ of~\cite{Toh:2010vx}. The ratings $\{1,\dots,5\}$ were used as-is and \emph{not} normalized to any user and/or movie means. In accordance with~\cite{Toh:2010vx}, 50\% of the ratings were used for training, the others were used as the test set. Here \textit{NMAE} is the mean absolute error, times $\frac{1}{5-1}$, over the total set of ratings. %
\textit{k} is the number of iterations of our algorithm, \textit{\#mm} is the total number of sparse matrix-vector multiplications performed, and \textit{tr} is the used trace parameter $t$ in~(\ref{eq:constrainedNuc}). They used \textsf{Matlab/PROPACK} on an \textsf{Intel Xeon} 3.20 GHz processor.
}
\label{tbl:movielens}
\vskip -0.1in
\begin{center}
\begin{small}
\begin{tabular}{|l|r|r|r|r|r|r|}
\hline
 & \textit{NMAE} & $t_{\text{TY}}$ & $t_{\text{our}}$ & \textit{k} & \textit{\#mm} & \textit{tr} \\
\hline	
\textsf{100k} &
0.205 &%
7.39 &%
0.156 & 15 & 33 & 9975\\ %
\textsf{1M} &
0.176 &%
24.5 &%
1.376 & 35 & 147 & 36060\\ %
\textsf{10M} &
0.164 &%
202 &%
36.10 & 65 & 468 & 281942 \\ %
\hline
\end{tabular}
\end{small}
\end{center}
\vskip -0.1in
\end{table}

In the following experiments on the \textsf{MovieLens} and \textsf{Netflix} datasets we have pre-normalized all training ratings to the simple average $\frac{\mu_{i}+\mu_{j}}{2}$ of the user and movie mean values, for the sake of being consistent with comparable literature.

For \textsf{MovieLens 10M}, we used partition $r_b$ provided with the dataset (10 test ratings per user). The regularization parameter $t$ was set to $48333$. We obtained a $\textit{RMSE}_{test}$ of 0.8617 after $k=400$ steps, in a total running time of 52 minutes (16291 matrix multiplications). Our best $\textit{RMSE}_{test}$ value was 0.8573, compared to 0.8543 obtained by~\cite{Lawrence:2009wr} using their non-linear improvement of MMMF.

\paragraph{Algorithm Variants.}
Comparing the proposed algorithm variants from Section~\ref{subsec:imprAlgos}, Figure~\ref{fig:3algo-variants} demonstrates moderate improvements compared to our original Algorithm~\ref{alg:nucSolv}.

\begin{figure}[!h]
\vspace{-0.2em}
\begin{center}
\includegraphics[width=0.6\columnwidth]{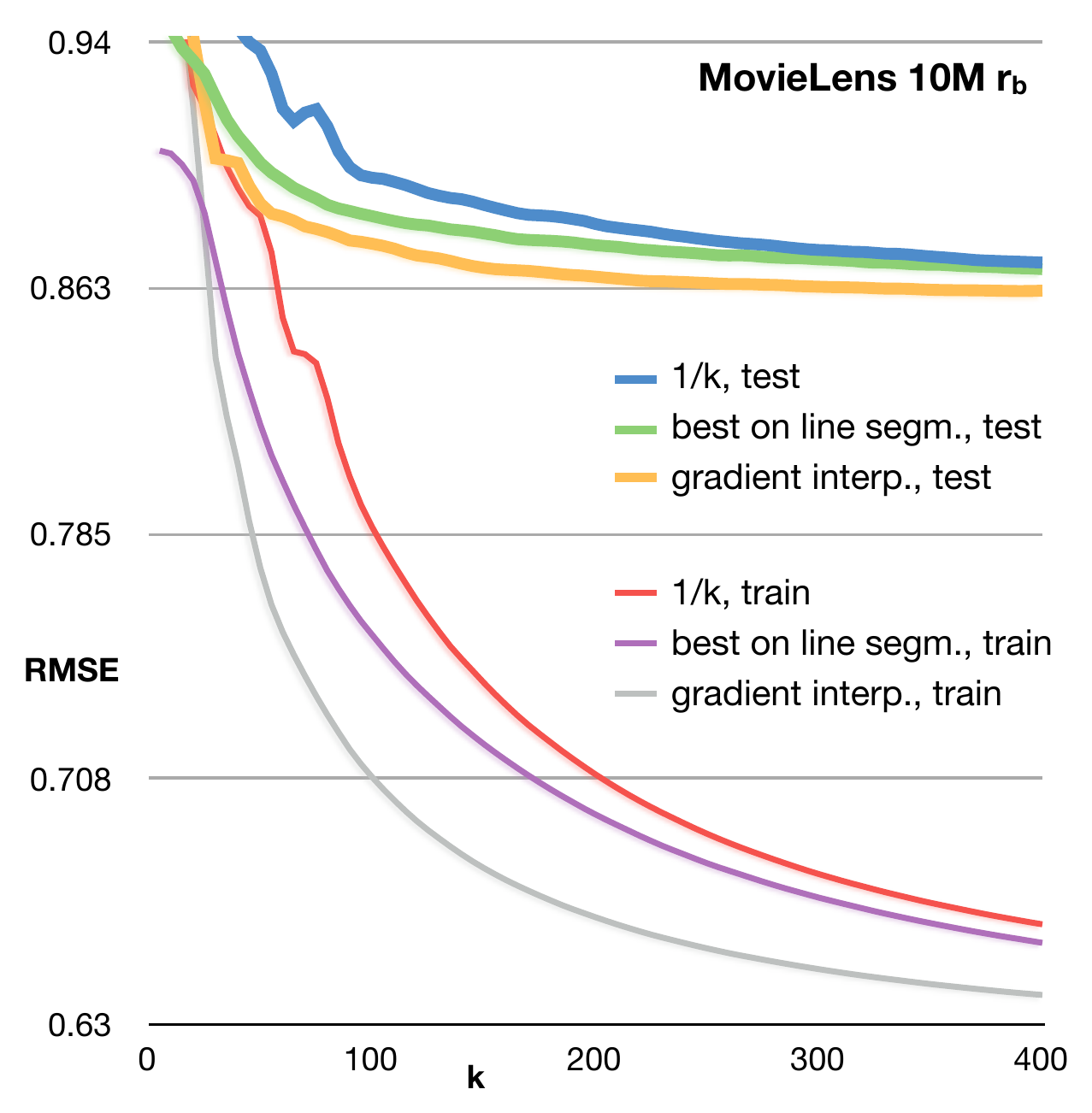}\vspace{-0.2em}
\caption{Improvements for the two algorithm variants described in Section~\ref{subsec:imprAlgos}, when running on \textsf{MovieLens 10M}. The thick lines above indicate the error on the test set, while the thinner lines indicate the training error.}
\label{fig:3algo-variants}
\end{center}
\vspace{-0.2em}
\end{figure}

\paragraph{Netflix.}
Table~\ref{tbl:netflix} compares our method to the two ``hard impute'' and ``soft impute'' singular value thresholding methods of~\cite{Mazumder:2010va} on the \textsf{Netflix} dataset, where they used \textsf{Matlab/PROPACK} on an \textsf{Intel Xeon} 3 GHz processor.
The ``soft impute'' variant uses a constrained rank heuristic in each update step, and an ``un-shrinking'' or fitting heuristic as post-processing. Both are advantages for their method, and were not used for our implementation. Nevertheless, our algorithm seems to perform competitive compared to the reported timings of~\cite{Mazumder:2010va}.

\begin{table}[!h]
\caption{Running times $t_{\text{our}}$ (in hours) of our algorithm on the \textsf{Netflix} dataset compared to the reported timings $t_{\text{M,\tiny{hard}}}$ for ``hard impute'' by~\cite{Mazumder:2009va} and $t_{\text{M,\tiny{soft}}}$ for ``soft impute'' by~\cite{Mazumder:2010va}.
}
\label{tbl:netflix}
\vskip -0.15in
\begin{center}
\begin{small}
\begin{tabular}{|l|r|r|r|r|r|r|}
\hline
$\textit{RMSE}_{test}$ & $t_{\text{M,\tiny{hard}}}\!\!\!$ & $t_{\text{M,\tiny{soft}}}\!\!\!$ & $t_{\text{our}}$ & \textit{k} & \textit{\#mm} & \textit{tr} \\
\hline			%
0.986 &
3.3 & n.a. &
0.144 %
 & 20 & 50 & 99592\\
0.977 &
5.8 & n.a. &
0.306 %
 & 30 & 109 & 99592\\
0.965 &			 %
6.6 & n.a. &
0.504 %
 & 40 & 185 & 99592\\		%
0.962 &			 %
n.a. & 1.36 &
1.08 %
 & 45 & 243 & 174285\\
0.957 &
n.a. & 2.21 &
1.69 %
 & 60 & 416 & 174285\\
0.954 &
n.a. & 2.83 &
2.68 %
 & 80 & 715 & 174285\\
0.9497 &
n.a. & 3.27 &
6.73 %
 & 135 & 1942 & 174285\\
0.9478 &
n.a. & n.a. &
13.6 %
 & 200 & 4165 & 174285\\
\hline
\end{tabular}
\end{small}
\end{center}
\vskip -0.23in
\end{table}

Note that the primary goal of this experimental section is \emph{not} to compete with the prediction quality of the best engineered recommender systems (which are usually ensemble methods, i.e. combinations of many different individual methods). We just demonstrate that our method solves nuclear norm regularized problems of the form~(\ref{eq:constrainedNuc}) on large sample datasets, obtaining strong performance improvements.

\subsection{Conclusion}
We have introduced a new method to solve arbitrary convex problems with a nuclear norm regularization, which is simple to implement and to parallelize.
The method is parameter-free and comes with a convergence guarantee. This guarantee is, to our knowledge, the first guaranteed convergence result for the class of Simon-Funk-type algorithms, as well as the first algorithm with a guarantee for max-norm regularized problems.

It remains to investigate if our algorithm can be applied to other matrix factorization problems such as (potentially only partially observed) low rank approximations to kernel matrices as used e.g. by the PSVM technique~\cite{Chang:2007tq}, regularized versions of latent semantic analysis (LSA), or non-negative matrix factorization~\cite{Wu:2007tm}.

%\input{paths}
%\clearpage{\pagestyle{empty}\cleardoublepage}
%
%\appendix
%\input{optimization}
%\clearpage{\pagestyle{empty}\cleardoublepage}

%why is there not automatically an entry?

%\newpage

\bibliographystyle{alpha} %was: amsalpha
\addcontentsline{toc}{section}{Bibliography}
\begin{small}
\bibliography{bibliography}
\end{small}

\end{document}